\documentclass[preprint,11pt,english]{imsart}
\usepackage[left=1in,right=1in,top=1in,bottom=1in]{geometry}

\RequirePackage[OT1]{fontenc}

\RequirePackage{multirow}

\RequirePackage{amsthm,amsmath,amssymb,mathrsfs,enumerate,mathtools}
\RequirePackage[numbers,sort]{natbib}
\RequirePackage[colorlinks,citecolor=blue,urlcolor=blue]{hyperref}
\RequirePackage{dsfont,float}
\RequirePackage[inline]{enumitem}

\usepackage{subfiles}

\newcommand{\dd}{\mathrm{d}}
\renewcommand{\v}[1]{\boldsymbol{#1}}
\newcommand{\m}[1]{\mathbf{#1}}

\newcommand{\cd}{\stackrel{\mathcal{D}}{\rightarrow}}

\newcommand{\cp}{\stackrel{\mathbb{P}}{\rightarrow}}
\newcommand{\cas}{\stackrel{\mathrm{a.s.}}{\rightarrow}}

\newcommand{\ind}{\mathds{1}}
\DeclareMathOperator*{\argmin}{arg\,min}

\theoremstyle{plain}
\newtheorem{theorem}{Theorem}
\newtheorem{lemma}{Lemma}
\newtheorem{proposition}{Proposition}
\newtheorem{assumption}{Assumption}
\newtheorem{corollary}{Corollary}

\theoremstyle{definition}
\newtheorem{algorithm}{Algorithm}
\newtheorem{definition}{Definition}

\theoremstyle{remark}

\newtheorem{example}{Example}

\begin{document}

\begin{frontmatter}
\title{The reproducing Stein kernel approach for post-hoc corrected sampling}
\runtitle{Reproducing Stein kernels}
\runauthor{Hodgkinson, Salomone, Roosta}

\begin{aug}

\author[A]{\fnms{Liam} \snm{Hodgkinson}\ead[label=e1]{liam.hodgkinson@berkeley.edu}},
\author[B]{\fnms{Robert} \snm{Salomone}\ead[label=e2]{r.salomone@unsw.edu.au}},
\and
\author[C]{\fnms{Fred} \snm{Roosta}\ead[label=e3]{fred.roosta@uq.edu.au}}
\address[A]{Department of Statistics, UC Berkeley, Berkeley, CA, 94720, USA.
\printead{e1}}
\address[B]{Centre for Data Science, School of Mathematical Sciences, Queensland University of Technology, 4000, Australia. \\ \printead{e2}}
\address[C]{School of Mathematics and Physics, University of Queensland, St Lucia, QLD, 4067, Australia. %
\printead{e3}}

\end{aug}

\begin{abstract}
Stein importance sampling \cite{Liu2017} 
is a widely applicable technique based on kernelized Stein discrepancy \cite{liu2016kernelized}, which corrects the output of approximate sampling algorithms by reweighting the empirical distribution of the samples. A general analysis of this technique is conducted for the previously unconsidered setting where samples are obtained via the simulation of a Markov chain, and applies to an arbitrary underlying Polish space. We prove that Stein importance sampling yields consistent estimators for quantities related to a target distribution of interest by using samples obtained from a geometrically ergodic Markov chain with a possibly unknown invariant measure that {\em differs} from the desired target. The approach is shown to be valid under conditions that are satisfied for a large number of unadjusted samplers, and is capable of retaining consistency when data subsampling is used. Along the way, a universal theory of reproducing Stein kernels is established, which enables the construction of kernelized Stein discrepancy on general Polish spaces, and provides sufficient conditions for kernels to be convergence determining on such spaces. These results are of independent interest for the development of future methodology based on kernelized Stein discrepancies.
\end{abstract}

\begin{keyword}
\kwd{reproducing kernel}
\kwd{Stein’s method}
\kwd{Markov chain Monte Carlo}
\kwd{importance sampling}
\end{keyword}

\begin{keyword}[class=MSC]
\kwd[Primary ]{65C05}
\kwd[; secondary ]{60J22, 60B10}
\end{keyword}
\end{frontmatter}

\maketitle
\section{Introduction}
Our problem of interest is the efficient computation of integrals with respect to some target probability measure $\pi$. Adopting the Monte Carlo approach, an empirical distribution is used to approximate the target measure, formed from samples drawn according to it. However, in many problems of interest, it is not possible to simulate according to $\pi$ exactly, and so further approximate methods must be used. Arguably the most widely employed and general approach is Markov Chain Monte Carlo (MCMC): successively drawing samples as a realization of a Markov chain. The dominant approach to MCMC involves the simulation of a process that is $\pi$-ergodic, often constructed by the Metropolis-Hastings algorithm from an underlying irreducible and aperiodic Markov chain \cite{roberts2004general}. 

However, there has been significant recent interest in so-called {\em unadjusted} MCMC approaches \cite{dalalyan2019user,Durmus2017,ma2015complete,heek2019bayesian}. A common strategy with these methods is the approximate numerical simulation of Langevin diffusions, which are $\pi$-ergodic \cite{ma2015complete}. For the same computational effort, one can achieve substantially lower variance of estimates at the cost of incurring additional (asymptotic) bias. 
Despite poorer asymptotic guarantees \cite{dwivedi2019log}, the ensuing Markov chains are often rapidly mixing, and perform particularly well in high dimensional settings \cite{durmus2019high}. However, despite significant recent empirical success \cite{heek2019bayesian} and the presence of explicit error estimates \cite{dalalyan2019user,Durmus2017}, such algorithms have not been widely adopted. Statisticians and practitioners alike remain skeptical, as consistency is often considered a basic theoretical necessity for any sampling algorithm.

The aim of this work is to construct a universal theoretical framework for \emph{Stein importance sampling}. Consequently, one can achieve the best of both worlds by generating approximate samples from a rapidly mixing Markov chain, and then subsequently `correcting' them, without pre-existing knowledge of the chain, to obtain consistent estimators of quantities involving an unnormalized probability distribution. This allows us to obtain both a low-variance estimator with good practical performance, and a theoretical guarantee that increasing the number of samples will provide an estimator with diminishing error. The approach is viable for all algorithms, most notably, including biased samplers.
Stein importance sampling is not new, having been introduced as a form of black-box importance sampling in \cite{Liu2017}, but lacks sufficient theoretical justification to support its use as a universal post-hoc correction tool, particularly as it applies to Markov chains. It belongs to a large class of algorithms derived from the \emph{kernelized Stein discrepancy} (KSD), first introduced in \cite{liu2016kernelized}, and motivated by ideas in \cite{oates2017control}. Interpreted as an integral probability metric with respect to a fixed target distribution $\pi$, the KSD combines ideas from Stein's method \cite{stein1986approximate,ross2011fundamentals} and reproducing kernel Hilbert spaces \cite{aronszajn1950theory, steinwart2008support}, and has the advantage of being explicitly computable. KSD has been applied to a myriad of problems, including measurement of sample quality \cite{gorham2017measuring}, goodness-of-fit tests \cite{chwialkowski2016kernel,liu2016kernelized,yang2018goodness,kanagawa2019kernel}, variance reduction \cite{oates2017control,oates2019convergence,Liu2017}, variational inference \cite{liu2016two, pu2017vae}, and the construction of other sampling algorithms \cite{liu2016stein, wang2016learning, detommaso2018stein, chen2018stein, chen2019stein}. 

Stein importance sampling stands out as one of the most straightforward techniques involving KSD: for elements $X_1,\dots,X_n \in S$, weights $w_1,\dots,w_n$ are selected such that the resulting weighted empirical distribution $\hat{\pi}^n$ with integrals
\begin{equation}
\label{eq:WeightedAvg}
\hat{\pi}^n(\phi) = \sum_{i=1}^n w_i \phi(X_i),\qquad %
\phi\,:\, S \to \mathbb{R},
\end{equation}
is as close as possible to a target distribution $\pi$ under a corresponding KSD $\mathbb{S}(\, \cdot\,\Vert \pi)$. As all weights must be non-negative and sum to one for $\hat{\pi}^n$ to be a valid probability distribution, the resulting optimization problem reduces to the constrained quadratic program
\begin{equation}
\label{eq:WeightsOpt}
\v w = \argmin_{\v w \in \mathbb{R}^d} \mathbb{S}(\hat{\pi}^n \Vert \pi) = \argmin_{\v w \in \mathbb{R}^d}\left\{ {\v w}^{\top} \m K_\pi \v w \, : \, \v w \geq \v 0, \; \sum_{i=1}^n w_i = 1\right\},
\end{equation}
where $\m K_{\pi}$ is the Gram matrix corresponding to a \emph{reproducing Stein kernel} for $\pi$, where the inequality $\v w \geq \v 0$ is to be interpreted element-wise. 
Stein importance sampling
requires no modification to existing methods; it is a post-hoc correction applicable for any arbitrary collection $\{X_k\}_{k=1}^n$.
The program (\ref{eq:WeightsOpt}) is a well-studied problem for which many solution techniques exist, for example, interior point methods, or alternating direction method of multipliers (see \cite{nocedal2006numerical} for a comprehensive survey).

\subsection{Contribution}
In this work, we make several important theoretical contributions to the theory of Stein importance sampling and reproducing Stein kernels: 
\begin{enumerate}[label=(\Roman*),resume]
\item {Our main result is presented in \S\ref{sec:SIS} as Theorem \ref{thm:SteinCorrection}, where we establish sufficient conditions for Stein importance sampling --- when applied to samples generated by a Markov chain --- to yield consistent estimators for integrals of test functions under $\pi$.}
\end{enumerate}

The result is particularly interesting for two reasons. Firstly, somewhat surprisingly, we do not require that the Markov chain is $\pi$-ergodic.  In fact, technical conditions for the result that pertain to the ergodicity and convergence of the chain are easily verified for many unadjusted samplers, which are of increasing interest to practitioners due to their rapid mixing in high dimension. Secondly, the result is established for a generalized form of Stein importance sampling where certain terms are estimated unbiasedly, for example by data subsampling, which provides justification for the use of the method in the large data settings common in twenty-first century statistical computing.

To establish the above result for general underlying spaces, we first in \S\ref{sec:Stein} --- following some background exposition ---  establish new theory for Stein kernels and KSD that is of independent interest:
\begin{enumerate}[label=(\Roman*),resume]
\item Construction of reproducing Stein kernels, and therefore KSD, with respect to probability measures on \emph{arbitrary} Polish spaces is addressed.
\end{enumerate}
The construction subsumes existing approaches pertaining to Euclidean space \cite{oates2017control,liu2016kernelized}, discrete spaces \cite{yang2018goodness}, and Riemannian manifolds \cite{liu2018riemannian,barp2018riemannian}, and further enables construction of Stein kernels and KSD on any Hilbert space, and other separable function spaces, such as $\mathcal{C}([0,1])$. It builds off the generator approach for Stein's method \cite{barbour1988stein}; Proposition \ref{prop:SteinConstruct} shows that reproducing Stein kernels can always be constructed from the generators of ergodic processes with strongly continuous semigroups, and  Proposition \ref{prop:SteinBuild} deals with the construction of reproducing Stein kernels on product spaces.
Finally,
\begin{enumerate}[label=(\Roman*),resume]
\item We establish general sufficient conditions for reproducing Stein kernels to yield a convergence determining KSD.
\end{enumerate}
Previously, specialized direct proofs have been used to establish the convergence determining property for specific KSDs; see \cite{gorham2017measuring, chen2018stein}. In Propositions \ref{prop:SepImpliesConv} and \ref{prop:SepConditions}, we show that in the locally compact setting, separability and divergence of some function in the image of the RKHS under the Stein operator yields the convergence determining property.

\subsection{Notation}
In the sequel, $S$ denotes an arbitrary Polish space, with $\mathcal{C}(S)$ (resp. $\mathcal{C}_b(S)$) the set of arbitrary (resp. bounded) continuous functions from $S$ to $\mathbb{R}$, and $\mathcal{F}$ a Banach subspace of $\mathcal{C}(S)$. For locally compact $S$, $\mathcal{C}_0(S)$ denotes the closure of the space of compactly supported continuous functions from $S$ to $\mathbb{R}$ under the uniform norm $\|\phi\|_{\infty} = \sup_{x \in S}|\phi(x)|$. For $S \subset \mathbb{R}^d$, $\mathcal{C}^k(\mathbb{R}^d)$ denotes the set of $k$-times continuously-differentiable functions from $\mathbb{R}^d$ to $\mathbb{R}$. The space of probability measures on $S$ is denoted by $\mathcal{P}(S)$. The law of a random element $X$ is denoted $\mathcal{L}(X)$. Let $\mathcal{M}(S)$ denote the space of $\sigma$-finite measures on $S$, equipped with the vague topology. For any measure $\mu \in \mathcal{M}(S)$, the space of $p$-integrable functions with respect to $\mu$ is denoted $L^p(\mu)$. For each $\phi \in L^1(\mu)$, we let $\mu(\phi) \coloneqq \int_S \phi(x) \mu(\dd x)$. For $\nu \in \mathcal{M}(S)$, we write $\mu \ll \nu$ if $\nu$ is absolutely continuous with respect to $\mu$, that is, if $\mu(B) = 0$ implies $\nu(B) = 0$ for any Borel set $B$ --- the corresponding Radon-Nikodym derivative in such cases is denoted $\frac{\partial \nu}{\partial \mu}$. We write $\mu \equiv \nu$ when the measures $\mu$ and $\nu$ are equivalent, that is, if $\mu(B) = \nu(B)$ for any Borel set $B$. We also adopt big $\mathcal{O}$ notation in probability: $\{X_n\}_{n=1}^{\infty},\{Y_n\}_{n=1}^{\infty}$ satisfy $X_n = \mathcal{O}_{\mathbb{P}}(Y_n)$ if for any $\epsilon > 0$, there exists a finite $M > 0$ such that $\sup_n \mathbb{P}(|X_n / Y_n| > M) < \epsilon$. For any operator $\mathcal{A}$ and function $f$ in the domain of $\mathcal{A}$ (denoted $\mbox{dom}(\mathcal{A})$), $\mathcal{A}f(x)$ denotes the evaluation of the function $\mathcal{A}f$ at $x$. For operators $\mathcal{A}_1,\mathcal{A}_2$ acting on elements of a function space $\mathcal{F}$ of scalar-valued functions on $S$, $\mathcal{A}_1 \otimes \mathcal{A}_2$ denotes the tensor product of $\mathcal{A}_1$ and $\mathcal{A}_2$, acting on functions $f: S \times S \to \mathbb{R}$ by
\[
(\mathcal{A}_1 \otimes \mathcal{A}_2)f(x,y) = (\mathcal{A}_1 f_2(\cdot, y))(x),\quad\mbox{where}\quad f_2(x,y) = (\mathcal{A}_2 f(x, \cdot))(y).
\]
For function spaces $H_1$ and $H_2$, $H_1 \otimes H_2$ denotes the tensor product space of $H_1$ and $H_2$.
Finally, collections of elements are denoted in \textbf{bold}, and for any $\v x = (x_1,\dots,x_d)$ and $1 \leq i \leq d$, we denote $\v x_{-i} = (x_1,\dots,x_{i-1},x_{i+1},\dots,x_d)$.

\section{Reproducing Stein Kernels}
\label{sec:Stein}
Discrepancy measures are a fundamental concept in many areas of probability and statistics. However, 
the vast majority of discrepancies can not be estimated unbiasedly, let alone computed exactly. 
The majority of useful discrepancies valid for arbitrary probability measures lie in the
class of \emph{integral probability metrics} \cite{muller1997integral}, which compare two probability measures $\mu$ and $\pi$ in terms of their integrals with respect to some class of test functions $\Phi$:
\[
d_{\Phi}(\mu, \pi) = \sup_{\phi \in \Phi} |\mu(\phi) - \pi(\phi)|.
\]
For example, by taking $\Phi$ to be the unit ball in $\mathcal{C}(S)$, $d_{\Phi}$ becomes the total variation metric. Choosing $\Phi$ 
to be the set of Fr\'{e}chet-differentiable functions
with derivative bounded uniformly by one yields the 
Wasserstein-1 metric \cite[Remark 6.5]{villani2008optimal}. Conditions that guarantee $d_{\Phi}$ is a metric on the
space of probability measures, and that $d_{\Phi}(\mu_n, \pi) \to 0$ as $n\to\infty$
implies $\mu_n(\phi) \to \pi(\phi)$ for any bounded continuous function $\phi$ (the \emph{convergence determining}
property), can be found in \cite[Theorem 3.4.5]{ethier2009markov}.

There are now two obstacles encountered when computing a metric $d_{\Phi}$ between an empirical measure $\mu$ and a target probability measure $\pi$. Firstly, while the expectation $\mu(\phi)$ is easy enough to evaluate, 
$\pi(\phi)$ is not. In fact, the empirical measure $\mu$ is usually
designed to approximate $\pi$, and computing $\pi(\phi)$ is often the objective! One of the first universal methods for solving this problem is
\emph{Stein's method}, an ingenious technique which encodes details about a 
target probability measure $\pi$ into a so-called {\em Stein operator} $\mathcal{A}_{\pi}$. Nearly twenty years
after the method's introduction in \cite{stein1986approximate}, Barbour \cite{barbour1988stein} developed a popular framework
for constructing Stein operators known as the \emph{generator approach}, which
also serves as intuition behind the method.

We briefly outline the approach: let $\pi$ be an arbitrary probability measure on $S$, and let $X_t^{\pi}$ be a
$\pi$-ergodic Markov process that induces a strongly continuous semigroup $\{P_t\}_{t \geq 0}$ on $\mathcal{F}$ defined by $P_t f(x) = \mathbb{E}[f(X_t^\pi)\vert X_0^\pi = x]$ for $f \in \mathcal{F}$. Arguably the most important case is when $S$ is locally compact and $X_t^\pi$ is Feller, in which case $\{P_t\}_{t\geq 0}$ is strongly continuous on $\mathcal{C}_0(S)$ \cite[Theorem 19.6]{kallenberg2006foundations}. Let $\mathcal{A}_{\pi}$ denote the Markov generator of $X_t^\pi$, that is, the linear operator satisfying $\mathcal{A}_\pi f = \lim_{t \to 0^+} \frac1t(P_t f - f)$ for functions $f \in \mathcal{F}$ where the limit exists with respect to the topology of $\mathcal{F}$. Let $\Phi$ be a dense subset of the domain of $\mathcal{A}_\pi$ with respect to the topology of $\mathcal{F}$. By \cite[Proposition 1.1.5(c)]{ethier2009markov}, if $X_0^{\pi}$ is distributed according to $\mu$, then for any $\phi \in \Phi$ and $t \geq 0$,
\begin{equation*}
\mathbb{E}\, \phi(X_t^{\pi}) - \mu(\phi) = \int_0^t \mathbb{E}\mathcal{A}_{\pi} \phi(X_s^{\pi}) \dd s,
\end{equation*}
and by taking $t\to\infty$, since $X_t^{\pi} \cd \pi$,
\begin{equation}
\label{eq:SteinFTC}
\mu(\phi) - \pi(\phi) = -\int_0^{\infty}\mathbb{E}\mathcal{A}_{\pi} \phi(X_s^{\pi}) \dd s.
\end{equation}
For example, let $\mu$ and $\pi$ be Dirac measures at points $a,b \in \mathbb{R}^d$ (respectively) and $X_t^\pi = b + (a-b)e^{-t}$ a deterministic process satisfying $X_0^\pi = a$ and $\lim_{t\to\infty} X_t^\pi = b$. The infinitesimal generator of $X_t^\pi$ is $\mathcal{A}_\pi \phi(x) = (b - x) \cdot \nabla \phi(x)$, and so
$
\delta_{a}(\phi) - \delta_{b}(\phi) = -\int_0^{\infty} \nabla \phi(X_s^\pi) \cdot \dd X_s^\pi,
$
which is precisely the fundamental theorem of calculus for line integrals, along the path parameterized by $X_t^\pi$. 
Therefore, (\ref{eq:SteinFTC}) is a {\em stochastic} generalization of the fundamental
theorem of calculus for line integrals, where the infinitesimal generator plays the role of the gradient, and the stochastic process the curve joining two probability measures.

Keeping with this
analogy, the \emph{Stein equation} is a stochastic analogue of the mean value theorem. Let $\mathbb{E}_x$ denote the expectation operator subject to the event $\{X_0^\pi = x\}$. Under the same assumptions, by \cite[Proposition 1.1.5]{ethier2009markov}, the function $x \mapsto \int_0^t \mathbb{E}_x \phi(X_s^\pi) \dd s$ lies in the domain of $\mathcal{A}_\pi$ for any $\phi \in \Phi$ and $t \geq 0$, with image $x \mapsto \int_0^t \mathbb{E}_x\mathcal{A}_\pi \phi(X_s^\pi) \dd s$ (in other words, the generator and the integral may be exchanged). Because $\mathcal{A}_\pi$ is closed \cite[Proposition 1.1.6]{ethier2009markov}, assuming strong continuity of $\mathcal{A}_\pi$, one may derive the Stein equation
\begin{equation}
\label{eq:SteinEq}
\mu(\phi) - \pi(\phi) = \mu(\mathcal{A}_{\pi} f_\phi),
\end{equation}
where $f_\phi(x) = -\mathbb{E}_x\int_0^{\infty} [\phi(X_s^{\pi})-\pi(\phi)]\dd s$ (see the proof of \cite[Theorem 5]{gorham2019}, for example). Denoting by $\mathcal{F}_{\Phi}$ the image of $\Phi$ under the operation $\phi \mapsto f_\phi$, note
\begin{equation}
\label{eq:SteinMethod}
d_{\Phi}(\mu,\pi) = \sup_{f_\phi \in \mathcal{F}_{\Phi}} |\mu(\mathcal{A}_{\pi} f_\phi)|,
\end{equation}
no longer involves the expectation $\pi(\phi)$, and therefore sidesteps the challenge in its computation. An important observation of Stein
is that for any operator $\mathcal{A}_{\pi}$ with the property
\begin{equation}
\label{eq:SteinOperator}
\mathbb{E}\mathcal{A}_{\pi}\phi(X) = 0\mbox{ holds for all }\phi \in \Phi \mbox{ if and only if }\mathcal{L}(X) \equiv \pi,
\end{equation}
we may proceed to formulate (\ref{eq:SteinEq}), solve for $f_\phi$ accordingly,
and arrive at (\ref{eq:SteinMethod}), even if $\mathcal{A}_\pi$ is not a Markov generator. This procedure describes Stein's method at its highest level
of generality. Hence, operators satisfying (\ref{eq:SteinOperator}) are often called \emph{Stein operators}.

The second issue with computing $d_{\Phi}$ is the non-trivial optimization problem of obtaining the supremum over $\Phi$. In the context of (\ref{eq:SteinMethod}), this is
even more difficult due to the complex relationship between $\Phi$ and $\mathcal{F}_{\Phi}$. The trick, as outlined in \cite{liu2016kernelized}, is to
choose $\Phi$ such that $\mathcal{F}_{\Phi}$ is the unit ball of a reproducing kernel Hilbert space. Recall that a Hilbert space $H$ of functions $h:S\to\mathbb{R}$ is a \emph{reproducing kernel Hilbert space} (RKHS) if for all $x \in S$, the evaluation functional $h \mapsto h(x)$ is bounded \cite{aronszajn1950theory}. 
By the Riesz representation theorem, for any RKHS $H$, there exists a unique symmetric, positive-definite function $k:S\times S \to \mathbb{R}$, called
the \emph{reproducing kernel} of $H$, such that $k(x,\cdot) \in H$ for any $x \in S$, and
\begin{equation}
\label{eq:ReproducingProperty}
h(x) = \langle h, k(x,\cdot) \rangle_H,\qquad h \in H.
\end{equation}
The relation (\ref{eq:ReproducingProperty}) is often called the \emph{reproducing property} of the kernel. Conversely, the Moore-Aronszajn theorem \cite[Theorem 4.21]{steinwart2008support} states that any positive-definite kernel induces
a unique corresponding RKHS. There exists a significant theory of RKHS; see, for example, \cite{aronszajn1950theory} and \cite[\S4]{steinwart2008support}. Reproducing kernel Hilbert spaces arise in machine learning 
as they can be used to reduce infinite-dimensional
optimization problems over function spaces, to problems over finite-dimensional spaces.
Such a simplification occurs in (\ref{eq:SteinMethod}) when $\mathcal{F}_{\Phi}$ is the unit ball of a RKHS $H$, due to the existence of a \emph{reproducing Stein kernel}, which is guaranteed when the Stein operator $\mathcal{A}_{\pi}$ is the generator of a Markov
process (Proposition \ref{prop:SteinConstruct}). For brevity, we will often drop the term `reproducing' as it is clear in our context, but stress that these Stein kernels should not be confused with those seen in \cite{courtade2019existence}, for example.
\begin{proposition}[Stein Kernels from Generators]
\label{prop:SteinConstruct}
Let $\pi$ be a probability measure on $S$, and $X_t^\pi$ a $\pi$-ergodic process with strongly continuous semigroup $\{P_t\}_{t \geq 0}$ over $\mathcal{F}$ and Markov generator $\mathcal{A}_{\pi}$. Further, let $H \subseteq \mathcal{F}$ be a RKHS with reproducing kernel $k$. Assuming that $k(\cdot,x) \in \mathrm{dom}(\mathcal{A}_\pi)$ for any $x \in S$, 
then $H_{\pi} = \mathcal{A}_{\pi} H$ is a RKHS with reproducing kernel
\begin{equation}
\label{eq:SteinKernel}
k_{\pi}(x, y) = (\mathcal{A}_{\pi} \otimes \mathcal{A}_{\pi})k(x,y),\qquad x,y\in S.
\end{equation}
\end{proposition}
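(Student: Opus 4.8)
The plan is to exhibit $k_\pi$ as the reproducing kernel of the image space $H_\pi = \mathcal{A}_\pi H$ by equipping $H_\pi$ with the natural pushforward inner product and then verifying the defining properties of a reproducing kernel directly. Concretely, since $\mathcal{A}_\pi$ restricted to $H$ is a linear map into $\mathcal{F}$, I would first dispose of well-definedness: if $\mathcal{A}_\pi$ is not injective on $H$, one passes to the quotient $H / \ker(\mathcal{A}_\pi|_H)$, or equivalently works with the orthogonal complement $(\ker \mathcal{A}_\pi|_H)^\perp$, so that $\mathcal{A}_\pi$ becomes a bijection onto $H_\pi$; then define $\langle \mathcal{A}_\pi h_1, \mathcal{A}_\pi h_2 \rangle_{H_\pi} \coloneqq \langle P h_1, P h_2 \rangle_H$, where $P$ is the projection onto that complement. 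This makes $H_\pi$ a Hilbert space isometric to a closed subspace of $H$, hence complete.

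Next I would identify the candidate reproducing element. By the hypothesis $k(\cdot, x) \in \mathrm{dom}(\mathcal{A}_\pi)$ for every $x$, the function $y \mapsto (\mathcal{A}_\pi \otimes \mathcal{A}_\pi) k(x, y)$ is obtained by first applying $\mathcal{A}_\pi$ in the second slot to $k(\cdot,\cdot)$ — giving, by the reproducing property applied to $h = \mathcal{A}_\pi k(x,\cdot)$... more carefully, one sets $g_x(\cdot) = \mathcal{A}_\pi k(\cdot, x) = (\mathcal{A}_\pi k(\cdot, x))$ wait; the cleanest route is: for fixed $x$, the element $k(x, \cdot) \in H$, so $\mathcal{A}_\pi k(x,\cdot) \in H_\pi$, and I claim $k_\pi(x, \cdot) = \mathcal{A}_\pi\big(\mathcal{A}_\pi k(\cdot, \cdot)\big)$ in the appropriate slot equals this element. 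The key computation is the reproducing identity: for any $h \in H$,
\begin{equation*}
\langle \mathcal{A}_\pi h, \, k_\pi(x, \cdot) \rangle_{H_\pi} = \langle h, \, \mathcal{A}_\pi k(x, \cdot) \rangle_H = (\mathcal{A}_\pi h)(x),
\end{equation*}
where the first equality is the definition of the $H_\pi$ inner product together with the fact that applying $\mathcal{A}_\pi$ in one tensor slot corresponds, under the reproducing property of $k$, to taking the $H$-inner product against $\mathcal{A}_\pi k(x,\cdot)$; and the second equality is the reproducing property of $k$ applied to the function $\mathcal{A}_\pi h \in \mathcal{F}$ — here one needs $\mathcal{A}_\pi h \in H$... which is exactly the statement $H_\pi \subseteq \mathcal{F}$ and that the pairing makes sense. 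I would also check symmetry and positive-definiteness of $k_\pi$: symmetry follows since $(\mathcal{A}_\pi \otimes \mathcal{A}_\pi)$ applied to the symmetric $k$ is symmetric, and positive-definiteness follows from $\sum_{i,j} c_i c_j k_\pi(x_i, x_j) = \big\| \sum_i c_i \mathcal{A}_\pi k(x_i, \cdot) \big\|_{H_\pi}^2 \geq 0$ once one verifies $\mathcal{A}_\pi k(x_i,\cdot) \in H_\pi$ and that $k_\pi(x_i,x_j) = \langle \mathcal{A}_\pi k(x_i,\cdot), \mathcal{A}_\pi k(x_j,\cdot)\rangle_{H_\pi}$, which is the reproducing identity specialized to $h = k(x_j, \cdot)$.

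The main obstacle I anticipate is the careful justification that applying the tensor operator $\mathcal{A}_\pi \otimes \mathcal{A}_\pi$ to $k$ is compatible with the Hilbert-space operations — that is, that $\mathcal{A}_\pi k(x, \cdot)$ genuinely lies in $H$ (equivalently in $\mathrm{dom}$ of the relevant restriction and in the closed subspace), so that the inner products above are legitimate, and that the two slots of the tensor can be evaluated in either order without ambiguity (needed to even define $k_\pi(x,y)$ unambiguously). This is where the assumption $k(\cdot, x) \in \mathrm{dom}(\mathcal{A}_\pi)$ and $H \subseteq \mathcal{F}$ must be used in full, possibly invoking closedness of $\mathcal{A}_\pi$ (as noted in the excerpt via \cite[Proposition 1.1.6]{ethier2009markov}) to interchange $\mathcal{A}_\pi$ with limits of finite linear combinations $\sum_i c_i k(x_i, \cdot)$ that converge in $H$ — a density/closed-graph argument ensuring the map $h \mapsto \mathcal{A}_\pi h$ respects $H$-convergence on the relevant domain. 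Once that bookkeeping is in place, the reproducing property, symmetry, and positive-definiteness all fall out of the displayed identity, and Moore–Aronszajn (cited as \cite[Theorem 4.21]{steinwart2008support}) confirms $H_\pi$ is the RKHS of $k_\pi$.
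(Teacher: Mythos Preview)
Your high-level outline is right: the crux is establishing that $(\mathcal{A}_\pi \otimes I)k(x,\cdot) \in H$ and that $\mathcal{A}_\pi h(x) = \langle h, (\mathcal{A}_\pi \otimes I)k(x,\cdot)\rangle_H$ for all $h \in H$; once this holds, \cite[Theorem 4.21]{steinwart2008support} finishes the argument exactly as you indicate. You also correctly identify this interchange as the main obstacle.

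Where the proposal has a genuine gap is in the resolution of that obstacle. Your suggestion to invoke closedness of $\mathcal{A}_\pi$ together with a density argument over finite combinations $\sum_i c_i k(x_i,\cdot)$ does not do the job: closedness is relative to the topology of $\mathcal{F}$, not $H$, and the hypothesis $k(\cdot,x)\in\mathrm{dom}(\mathcal{A}_\pi)$ only guarantees that $\mathcal{A}_\pi k(\cdot,x)$ exists as an element of $\mathcal{F}$ for each fixed $x$ --- it says nothing about whether the resulting function of the \emph{other} variable lies in $H$. Your second displayed equality also conflates $\langle h, \mathcal{A}_\pi k(x,\cdot)\rangle_H$ with $\langle \mathcal{A}_\pi h, k(x,\cdot)\rangle_H$: the reproducing property applied to $\mathcal{A}_\pi h$ gives the latter, not the former, and the two coincide only \emph{after} the interchange is proved, so the argument as written is circular.

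The paper resolves this by using the fact that $\mathcal{A}_\pi$ is a \emph{generator}, not merely a closed operator. One approximates $\mathcal{A}_\pi$ by the bounded difference quotients $\Delta_t = t^{-1}(P_t - I)$. Because $P_t h = \int_S h(y)\,\kappa_t(\cdot,\dd y)$ is a Bochner integral and continuous linear functionals commute with Bochner integrals, the interchange $\Delta_t h(x) = \langle h, (\Delta_t\otimes I)k(x,\cdot)\rangle_H$ is immediate for each fixed $t>0$. One then shows that $\{(\Delta_{t_n}\otimes I)k(x,\cdot)\}_n$ is Cauchy in $H$ as $t_n\to 0$: expand the squared norm via the reproducing property to obtain terms $(\Delta_{t_n}\otimes\Delta_{t_m})k(x,y)$, rewrite each $\Delta_t$ as $P_\xi \mathcal{A}_\pi$ for some $\xi\in(0,t)$ via the Kolmogorov equations, and invoke strong continuity of $\{P_t\}$ to conclude these converge to $k_\pi(x,y)$. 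Completeness of $H$ then places the limit $(\mathcal{A}_\pi\otimes I)k(x,\cdot)$ in $H$, and the desired interchange follows by taking the limit. This is where the strong continuity hypothesis on the semigroup is genuinely used; abstract closedness alone does not supply it.
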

The proof relies on the interchangeability of inner products with Bochner integrals and differential operators on $\mathbb{R}^d$; see \cite[Lemma 4.34]{steinwart2008support} for example. This property extends to Markov generators through their time derivative formulation.
\begin{proof}%
Since the Markov generator is the time derivative of its corresponding semigroup, we can proceed in a similar fashion to \cite[Lemma 4.34]{steinwart2008support}
concerning interchangability of derivatives and inner products.
Let $P_t$ denote the semigroup of $X_t^{\pi}$, recalling that for any
$f \in \mathcal{F}$, 
\begin{equation}
    \label{eq:SemigroupIntegral}
P_t f ~=~\int_S f(x) \kappa_t(\cdot,\dd x),
\end{equation}
where
$\{\kappa_t\}_{t\geq 0}$ is the family of transition probability kernels of $X_t^{\pi}$. 
Interpreting (\ref{eq:SemigroupIntegral}) as a Bochner integral, for any $h \in H$, since $\langle h, \cdot \rangle_H$
is a continuous linear operator, the reproducing property together with
properties of the Bochner integral imply
\[
P_t h(x) = \int_S \langle h, k(y, \cdot) \rangle_H \; \kappa_t(x, \dd y) = \left\langle h, \int_S k(y, \cdot) \kappa_t(x, \dd y)\right\rangle_H.
\]
Therefore, by letting $\Delta_t = t^{-1}(P_t - I)$ for each $t > 0$, where $I$ is the identity operator,
\begin{equation}
\label{eq:SteinConDelta}
\Delta_t h(x) = \left\langle h, (\Delta_t \otimes I)k(x, \cdot) \right\rangle_H.
\end{equation}
To show that $(\mathcal{A}_{\pi} \otimes I) k$ exists, it suffices
to show that for any sequence of positive real numbers $\{t_n\}_{n=1}^{\infty}$
converging to zero and any $x \in S$, $\{(\Delta_{t_n}\otimes I)k(x,\cdot)\}_{n=1}^{\infty}$ is a Cauchy
sequence in $H$. Since $H$ is complete, by definition of the generator
$\mathcal{A}_{\pi}$,
\begin{equation}
\label{eq:SteinConGenDelta}
\lim_{n\to\infty} (\Delta_{t_n} \otimes I)k(x,\cdot) = (\mathcal{A}_{\pi}\otimes I)k(x,\cdot) \in H.
\end{equation}
For any $x \in S$, let $D_t(x) = (\Delta_t \otimes I)k(x, \cdot) \in H$. Choosing any $x \in S$ and $m,n\in \mathbb{N}$,
\begin{multline}
\label{eq:SteinConInnerExp}
\|D_{t_n}(x) - D_{t_m}(x)\|_H^2 
= \langle D_{t_n}(x), D_{t_n}(x)\rangle_H\\
+\langle D_{t_m}(x), D_{t_m}(x)\rangle_H
-2\langle D_{t_n}(x), D_{t_m}(x)\rangle_H.
\end{multline}
Now, for any $y \in S$, by multiple applications of (\ref{eq:SteinConDelta}),
\[
\langle D_{t_n}(x),D_{t_m}(y)\rangle_H
=
(\Delta_{t_n} \otimes \Delta_{t_m}) k(x,y).
\]
The Kolmogorov equations \cite[Proposition 1.5(b)]{ethier2009markov} state that for any $t \geq 0$ and $h$ in the domain of $\mathcal{A}_{\pi}$, $\Delta_t h(x) = t^{-1} \int_0^t P_s \mathcal{A}_{\pi} h(x) \dd s$, which, by the mean value theorem,
implies $\Delta_t h(x) = P_{\xi} \mathcal{A}_{\pi} h(x)$ for some $\xi \in (0,t)$. Since $k(\cdot,y)$ and $k(x,\cdot)$ are both assumed to be in the domain of
$\mathcal{A}_{\pi}$ (due to the symmetry of $k$), for some $\xi_{m,n} \in (0,t_n)$ and $\eta_{m,n} \in (0,t_m)$,
\[
(\Delta_{t_n}\otimes \Delta_{t_m})k(x,y) = (P_{\xi_{m,n}}\mathcal{A}_{\pi} \otimes
P_{\eta_{m,n}}\mathcal{A}_{\pi})k(x,y).
\]
The strong continuity of the semigroup $P_t$ implies that for any $\epsilon > 0$,
there is an $N \in \mathbb{N}$ such that for $m,n\geq N$,
\[
|\langle D_{t_n}(x), D_{t_m}(y)\rangle_H
- k_{\pi}(x,y)| < \epsilon.
\]
The above inequality combined with (\ref{eq:SteinConInnerExp}) implies that $\{D_{t_n}(x)\}_{n=1}^{\infty}$ is a Cauchy sequence, and so (\ref{eq:SteinConGenDelta}) holds.
Together with (\ref{eq:SteinConDelta}),
\[
\mathcal{A}_{\pi} h(x) = \langle h, (\mathcal{A}_{\pi} \otimes I)k(x, \cdot) \rangle_H.
\]
The result now follows from \cite[Theorem 4.21]{steinwart2008support}.
\end{proof}

\begin{example}[Riemannian Stein Kernels]
We rederive the construction of Stein kernels on Riemannian manifolds outlined in \cite{barp2018riemannian, liu2018riemannian}.
Let $(M,g)$ be a complete connected $d$-dimensional Riemannian manifold with basis coordinates $(\partial_j)_{j=1}^d$. The gradient of a smooth function $f:M\to\mathbb{R}$ is defined on $(M,g)$ by $\nabla f = \sum_{i,j=1}^d g^{ij} \frac{\partial f}{\partial x_i} \partial_j$, where $g^{ij}$ is the $(i,j)$-th element of the inverse of the matrix $\m G=(g_{ij})$ with elements $g_{ij} = g(\partial_i, \partial_j)$ for $i,j=1,\dots,d$. The \emph{Laplace-Beltrami operator} $\Delta$ on $(M,g)$ is the divergence of the gradient of $f$, given by
\[
\Delta f = \frac{1}{\sqrt{\det \m G}} \sum_{i,j=1}^d \frac{\partial}{\partial x_i} \left(g^{ij} \frac{\partial f}{\partial x_j}\sqrt{\det \m G} \right). 
\]

For any $\mathcal{C}^1$-smooth vector field $Z$ on $(M,g)$, there exists a unique Feller process $X_t$ on $(M,g)$ with generator $\Delta + Z$ \cite[\S2.1]{wang2014analysis}. If $\pi$ is absolutely continuous with respect to the Riemannian volume measure on $(M,g)$ with density $p$, the choice $Z = \nabla \log p$ yields a process $X_t$ with invariant measure $\pi$ \cite[pg. 72]{wang2014analysis}. Furthermore, under the Bakry-Emery curvature condition, the diffusion $X_t$ is ergodic, indeed, geometrically ergodic in the Wasserstein metric \cite[Theorem 2.3.3]{wang2014analysis}. Therefore, letting $\mathcal{A}_\pi = \Delta + \nabla \log p$ be the generator of the diffusion $X_t$, by Proposition \ref{prop:SteinConstruct}, $\mathcal{A}_\pi$ induces a Stein kernel for any base reproducing kernel $k$ on $(M,g)$, with target measure $\pi$. 
To complete the construction of Stein kernels on manifolds requires an underlying reproducing kernel. Any reproducing kernel $k$ on $\mathbb{R}^{2d}$ admits an (extrinsic) reproducing kernel on $(M,g)$ by the strong Whitney embedding theorem \cite[Theorem 2.2.a]{skopenkov2008embedding} ($(x,y) \mapsto k(\phi(x),\phi(y))$ where $x,y \in M$ and $\phi:M \to \mathbb{R}^{2d}$ is an embedding). Intrinsic kernels are known for special manifolds, such as the $d$-dimensional sphere $\mathbb{S}^d = \{x \in \mathbb{R}^{d+1}\; :\; \|x\| = 1\}$ \cite{alegria2018family}.
\end{example}

\begin{example}[Zanella-Stein Kernels]\label{Ex.Zanella}
Stein kernels can be constructed on arbitrary discrete structures using the \emph{Zanella processes} introduced in \cite{zanella2019informed}. Here, we follow the presentation of \cite{power2019accelerated}. Suppose that $\pi$ is a probability function on a countable set $\mathcal{X}$. For each $x \in \mathcal{X}$, specify a set $\partial x \subset \mathcal{X}$ as the \emph{neighbourhood} of $x$. Doing so endows $\mathcal{X}$ with a directed graph structure $G$ with edge set $\mathcal{E} = \{(x,y)\,:\,x \in \mathcal{X}, y \in \partial x\}$.
The choice of each neighbourhood is arbitrary, with the exception that the resulting graph $G$ must be strongly connected and aperiodic.
Let $g:\, \mathbb{R}_+ \to \mathbb{R}_+$ satisfy $g(x) = x g(1 / x)$ for all $x > 0$. Such functions are referred to as \emph{balancing functions}, and include $g(x) = \min\{1,x\}$ and $g(x) = \frac{x}{1+x}$. Then, the Markov jump process with infinitesimal generator
\[
\mathcal{A}_{\pi} f(x) = \sum_{y \in \partial x}  g\left(\frac{\pi(y)}{\pi(x)}\right) [f(y) - f(x)],
\]
satisfies the detailed balance equations on $\mathcal{X}$ with respect to $\pi$. Assuming the process is also ergodic, for any positive-definite \emph{graph kernel} $k$ on $G$ (see \cite{ghosh2018journey} for a review on the subject), applying Proposition \ref{prop:SteinConstruct} to $\mathcal{A}_\pi$ and $k$ yields a reproducing Stein kernel for $\pi$ on $\mathcal{X}$:
\[
k_\pi(x, y) = \sum_{x' \in \partial x,\, y' \in \partial y} g\left(\frac{\pi(x')}{\pi(x)}\right) g\left(\frac{\pi(y')}{\pi(y)}\right) [k(x',y') - k(x',y) - k(x,y') + k(x,y)].
\]

\end{example}

As a consequence of the proof of Proposition \ref{prop:SteinConstruct}, we require only that $\mathcal{A}_{\pi}h(x) = \langle h, (\mathcal{A}_{\pi}\otimes I) k(x,\cdot)\rangle_H$ for any $h \in H$ and $x \in S$, to infer that $H_\pi \subset H$ is a RKHS with reproducing kernel (\ref{eq:SteinKernel}). Therefore, for more general Stein operators, we rely on the following definition for constructing reproducing Stein kernels. 
\begin{definition}
A Stein operator $\mathcal{A}_\pi$ is said to \emph{induce a Stein kernel} if for any RKHS $H \subset \mathrm{dom}(\mathcal{A}_\pi)$ of scalar-valued functions on $S$ with reproducing kernel $k$,
\[
\mathcal{A}_\pi h(x) = \langle h, (\mathcal{A}_\pi \otimes I) k(x,\cdot)\rangle_H,\quad \mbox{for any }h \in H\mbox{ and }x \in S.
\]
\end{definition}

For any RKHS $H$ of functions on $S$,
target measure $\pi$, and operator $\mathcal{A}_{\pi}$ inducing a Stein kernel $k_{\pi}$,
the \emph{kernelized Stein discrepancy} (KSD) between $\mu$ and $\pi$, denoted $\mathbb{S}(\mu \Vert \pi)$, satisfies
\begin{align}
\label{eq:KSD}
\mathbb{S}(\mu\Vert \pi)^2 &\coloneqq \sup_{\substack{h \in H \\ \|h\|_H \leq 1}} |\mu(\mathcal{A}_{\pi} h)|^2
= \sup_{\substack{h \in H_\pi \\ \|h\|_{H_\pi}\leq 1}} |\mu(h)|^2 \nonumber\\
&= \sup_{\substack{h \in H_\pi \\ \|h\|_{H_\pi}\leq 1}} \left\langle h, \int_S k_\pi(x,\cdot) \mu(\dd x) \right\rangle_{H_\pi}^2
= \left\lVert \int_S k_\pi(x,\cdot) \mu(\dd x)\right\rVert_{H_\pi}^2 \nonumber
\\&= \int_{S \times S}  k_{\pi}(x,y) \mu(\dd x) \mu(\dd y),
\end{align}
where, as in Proposition \ref{prop:SteinConstruct}, $H_\pi = \mathcal{A}_\pi H$ and the second equality follows by the property $\|\mathcal{A}_\pi h\|_{H_\pi} = \|h\|_H$ for any $h \in H$.
This construction (\ref{eq:KSD}) resembles maximum mean discrepancy (MMD); see \cite[\S3.5]{muandet2017kernel} for further details.
Now, if $\mu$ is the empirical measure of samples $X_1,\dots,X_n$,
then
\begin{equation}
\label{eq:KSDEmpirical}
\mathbb{S}(\mu\Vert\pi)^2 = \frac1{n^2} \sum_{i,j=1}^n k_{\pi}(X_i,X_j).
\end{equation}

Both of the issues concerning computation of discrepancies for empirical distributions are now addressed, with an explicit and often easily computable formula for the evaluation of a particular integral probability metric. 
However, the kernelized Stein discrepancy is not limited to applications with empirical distributions, since, for any distribution $\mu$ on $S$, we can readily estimate
$\mathbb{S}(\mu\Vert \pi)^2 = \mathbb{E}\, k_{\pi}(X,X')$ where $X,X'$ are independently distributed according to $\mu$, using crude Monte Carlo. 

\subsection{Efficient construction of Stein kernels on product spaces}
Building upon Barbour's generator approach to Stein's method \cite{barbour1988stein}, using
Proposition \ref{prop:SteinConstruct} 
one can, in principle, construct Stein kernels over any desired space. 
Unfortunately, Stein kernels obtained in this way are not always efficient to compute, and do not always coincide with Stein kernels
currently in the literature. Consider the two following univariate cases:
\begin{itemize}[leftmargin=*]
\item $S = \mathbb{R}$:
If $\pi$ is an absolutely continuous probability measure on $\mathbb{R}$ with smooth density $p$ supported on a connected subset of $\mathbb{R}$, the overdamped
Langevin stochastic differential equation
\begin{equation}
\label{eq:LangevinSDE}
\dd X_t = \frac{p'(X_t)}{p(X_t)} \ \dd t + \sqrt{2} \dd W_t
\end{equation}
is an ergodic Feller process with stationary measure $\pi$. By It\^{o}'s lemma,
its Markov generator
$\mathcal{A}_{\pi}$ acts on functions $h \in \mathcal{C}^2(\mathbb{R})$ by
\begin{equation}
\label{eq:LangevinGen}
\mathcal{A}_{\pi} h(x) = h''(x) + \frac{p'(x)}{p(x)} \ h'(x),\qquad x \in \mathbb{R}.
\end{equation}
Because $h \in \mathcal{C}^2(\mathbb{R})$ is arbitrary, the extra derivatives on $h''$ and $h'$ are redundant for $\mathcal{A}_\pi$ to be a Stein operator,
and one may instead consider
\begin{equation}
\label{eq:SteinOpR}
\mathcal{A}_{\pi} h(x) = h'(x) + \frac{p'(x)}{p(x)} h(x),\qquad x \in \mathbb{R}.
\end{equation}
which coincides with the canonical Stein operator on $\mathbb{R}$ \cite{ley2017stein,oates2017control,stein1986approximate}.
The operator $\mathcal{A}_\pi$ as in (\ref{eq:SteinOpR}) is
no longer the generator of a Markov process, however it still induces a Stein
kernel nonetheless, as a consequence of \cite[Lemma 4.34]{steinwart2008support} (see also the
remark after Proposition \ref{prop:SteinConstruct}). 
Alternatively, in place of the
Langevin diffusion (\ref{eq:LangevinSDE}), one could start with any other
$\pi$-ergodic diffusion equation. See \cite{gorham2019} for other alternatives,
all of which induce Stein kernels under the reduction of derivatives, as in (\ref{eq:SteinOpR}).
\item $S \subseteq \mathbb{Z}$: If $\pi(\cdot)$ is
a probability mass function on $\mathbb{Z}$ (or some other countable set, in
which case, one need only apply a injection into $\mathbb{Z}$), any birth-death
chain on the integers with birth and death rates $\alpha(\cdot)$ and $\beta(\cdot)$
respectively, will have stationary measure $\pi$ provided it satisfies the
detailed balance relations
\[
\alpha(x) \pi(x) = \pi(x + 1)\beta(x+1),\qquad \mbox{for all }x \in S.
\]
An immediate choice is
\begin{equation}
\label{eq:SteinDiscreteRates}
\alpha(x) \propto \pi(x+1),\qquad \beta(x) \propto \pi(x - 1),\qquad x \in S.
\end{equation}
Notice that the proposed solution (\ref{eq:SteinDiscreteRates}) does not require that $\pi$ be normalized.
The generator of this chain acts on all functions $h:\mathbb{Z}\to\mathbb{R}$ by
\[
\mathcal{A}_{\pi}h(x) = \alpha(x)[h(x+1)-h(x)]+\beta(x)[h(x-1)-h(x)],\qquad x \in S.
\]
Analogously to how the generator in (\ref{eq:LangevinGen}) acted on the derivatives of $h$, $\mathcal{A}_{\pi} h$ is acting on the (backward) difference of $h$. A similar reduction yields
\begin{equation}
\label{eq:SteinDiscreteOp}
\mathcal{A}_{\pi} h(x) = \alpha(x) h(x+1) - \beta(x) h(x),\qquad x \in S.
\end{equation}
It can be readily shown that (\ref{eq:SteinDiscreteOp}) induces a Stein
kernel. The Stein operator (\ref{eq:SteinDiscreteOp}) is most common in discrete scenarios,
as seen in \cite{brown2001stein}, and coincides with the Stein-Chen operator
for the Poisson distribution when $\alpha(x) = 1$ and $\beta(x) = x+1$, for example. 
This was also the approach taken in \cite{yang2018goodness} for the construction of reproducing Stein kernels on discrete spaces. 
\end{itemize}

With (\ref{eq:SteinOpR}) as (\ref{eq:SteinDiscreteOp}) as building blocks, one can easily
construct Stein kernels on higher-dimensional spaces that are
products of $\mathbb{R}$ and $\mathbb{Z}$. The method for doing so
is a decomposition of the state space into a product of subspaces, and extends to products of arbitrary Polish spaces as well.
Let $S \subseteq S_1 \times \cdots \times S_d$ and $\pi$ be a distribution on
$S$. For each $i=1,\dots,d$, let $\pi_i(\cdot\vert \v x_{-i})$ denote the conditional distribution of $\pi$ over its $i$-th subspace $S_i$, for $\v x \in S$. If $\mathcal{A}_{\pi_i(\cdot\vert \v x_{-i})}$ is a Stein operator for $\pi_i(\cdot \vert \v x_{-i})$ for each $i=1,\dots,d$ and $\v x \in S$, then we may consider the operator
\[
\mathcal{A}_{\pi}\v f(\v x) = \mathcal{A}_{\pi_1(\cdot\vert \v x_{-1})} \mathcal{P}_1^{\v x} f_1(\v x) + \cdots + \mathcal{A}_{\pi_d(\cdot\vert \v x_{-d})} \mathcal{P}_d^{\v x} f_d(\v x),
\]
where $\v f = (f_1,\dots,f_d) : S \to \mathbb{R}^d$, and $\mathcal{P}_i^{\v x}$ is the projection operator mapping $f:S\to \mathbb{R}$ to the function $y \mapsto f(x_1,\dots,x_{i-1},y,x_{i+1},\dots,x_d)$ over $y \in S_i$. 
By linearity, if each $\mathcal{A}_{\pi_i(\cdot\vert \v x)}$ induces a Stein kernel,
then $\mathcal{A}_{\pi}$ also induces a Stein kernel. This is formalized
in Proposition \ref{prop:SteinBuild}, the proof of which builds upon the arguments of \cite[Theorem 1]{oates2017control}, and generalizes \cite[Theorem 1]{wang2017stein}.

\begin{proposition}[Stein Kernels on Product Spaces]
\label{prop:SteinBuild}
Suppose that $S \subseteq S_1 \times \cdots \times S_d$ and let $\pi$ be a probability measure on $S$. For each $i=1,\dots,d$,
let $\pi_i(\cdot \vert \v x_{-i})$ denote the conditional distribution of $\pi$ over its $i$-th subspace $S_i$. Furthermore,
for each $i$, let $\mathcal{A}_{\pi_i(\cdot\vert \v x_{-i})}$ be a Stein operator for $\pi_i(\cdot\vert \v x_{-i})$ acting on $\bigcup_{\v x \in S} \mathcal{P}_i^{\v x} H_i$, where $H_i$ is a RKHS of scalar-valued functions on $S$ with reproducing kernel $k_i$, that also induces a Stein kernel.
Then, the operator $\mathcal{A}_{\pi}$ acting on functions $\v h=(h_1,\dots,h_d) \in H_1 \otimes \cdots \otimes H_d$ as
\[
\mathcal{A}_{\pi} \v h(\v x) = \mathcal{A}_{\pi_1(\cdot\vert \v x_{-1})} \mathcal{P}^{\v x}_1 h_1(\v x) + \cdots + \mathcal{A}_{\pi_d(\cdot\vert \v x_{-d})} \mathcal{P}^{\v x}_d h_d(\v x),
\qquad x \in S,
\]
is a Stein operator for $\pi$ on $H = H_1 \otimes \cdots \otimes H_d$, and $\mathcal{A}_{\pi} H$ is a RKHS of scalar-valued functions on $S$ with reproducing kernel
\begin{equation}
\label{eq:SteinBuild}
k_{\pi}(\v x,\v y) = \sum_{i=1}^d (\mathcal{A}_{\pi_i(\cdot\vert \v x_{-i})}\mathcal{P}_i^{\v x} \otimes \mathcal{A}_{\pi_i(\cdot\vert \v y_{-i})}\mathcal{P}_i^{\v y}) k_i(\v x,\v y).
\end{equation}
\end{proposition}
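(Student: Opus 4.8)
The plan is to deduce both assertions --- that $\mathcal{A}_\pi$ is a Stein operator for $\pi$ on $H$, and that $\mathcal{A}_\pi H$ is a RKHS with reproducing kernel (\ref{eq:SteinBuild}) --- from the one-dimensional statements already in hand, namely that each $\mathcal{A}_{\pi_i(\cdot\vert\v x_{-i})}$ is a Stein operator inducing a Stein kernel, together with the Definition and the remark following Proposition \ref{prop:SteinConstruct}, and then to reassemble the pieces using standard reproducing-kernel operations (restriction to a slice, and sums of RKHSs). Throughout, fix measurable versions of the conditionals $\pi_i(\cdot\vert\v x_{-i})$ and of the operators $\mathcal{A}_{\pi_i(\cdot\vert\v x_{-i})}$ so that the displayed objects are defined pointwise on $S$.

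For the Stein identity, linearity reduces matters to $\v h$ supported in a single coordinate $i$, so that $\mathcal{A}_\pi\v h(\v x) = (\mathcal{A}_{\pi_i(\cdot\vert\v x_{-i})}\mathcal{P}_i^{\v x} h_i)(x_i)$ with $h_i\in H_i$. When $X\sim\pi$, conditioning on $X_{-i}$ gives $X_i\mid\{X_{-i}=\v x_{-i}\}\sim\pi_i(\cdot\vert\v x_{-i})$, and $\mathcal{P}_i^{\v x} h_i\in\mathcal{P}_i^{\v x}H_i\subseteq\mathrm{dom}(\mathcal{A}_{\pi_i(\cdot\vert\v x_{-i})})$, so the Stein property of $\mathcal{A}_{\pi_i(\cdot\vert\v x_{-i})}$ forces $\mathbb{E}[(\mathcal{A}_{\pi_i(\cdot\vert\v x_{-i})}\mathcal{P}_i^{\v x} h_i)(X_i)\mid X_{-i}=\v x_{-i}]=0$, and the tower rule yields $\mathbb{E}_\pi[\mathcal{A}_\pi\v h]=0$; summing recovers it for all $\v h\in H$. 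Conversely, if $\mathbb{E}[\mathcal{A}_\pi\v h(X)]=0$ for all $\v h\in H$, write $\nu=\mathcal{L}(X)$, take $\v h$ supported in coordinate $i$ and disintegrate $\nu$ over $S_{-i}$; using test functions $h_i$ that also vary over $S_{-i}$, the Stein property of $\mathcal{A}_{\pi_i(\cdot\vert\v x_{-i})}$ forces $\nu_i(\cdot\vert\v x_{-i})\equiv\pi_i(\cdot\vert\v x_{-i})$ for a.e. $\v x_{-i}$ and every $i$, and the characterisation of a joint law by its full conditionals then gives $\nu\equiv\pi$.

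The substantive part is the reproducing kernel. For each $i$ let $\mathcal{B}_i h_i(\v x)=(\mathcal{A}_{\pi_i(\cdot\vert\v x_{-i})}\mathcal{P}_i^{\v x}h_i)(x_i)$, so that $\mathcal{A}_\pi\v h=\sum_{i=1}^d\mathcal{B}_i h_i$ and $\mathcal{A}_\pi H=\mathcal{B}_1H_1+\cdots+\mathcal{B}_dH_d$. Fix $i$ and $\v x$. The slicing map $\mathcal{P}_i^{\v x}$ carries $H_i$ onto the RKHS on $S_i$ with reproducing kernel $(u,v)\mapsto k_i((\v x_{-i},u),(\v x_{-i},v))$ (Aronszajn's restriction theorem), and its adjoint sends $k_i((\v x_{-i},u),(\v x_{-i},\cdot))$ to $k_i((\v x_{-i},u),\cdot)$. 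Applying to this sliced RKHS the hypothesis that $\mathcal{A}_{\pi_i(\cdot\vert\v x_{-i})}$ induces a Stein kernel --- i.e. the remark after Proposition \ref{prop:SteinConstruct} --- the functional $g\mapsto(\mathcal{A}_{\pi_i(\cdot\vert\v x_{-i})}g)(x_i)$ is bounded there with Riesz representer $(\mathcal{A}_{\pi_i(\cdot\vert\v x_{-i})}\otimes I)$ of the sliced kernel at $x_i$; composing with $\mathcal{P}_i^{\v x}$ and pushing its adjoint through shows that $h_i\mapsto\mathcal{B}_i h_i(\v x)$ is bounded on $H_i$ with representer $(\mathcal{A}_{\pi_i(\cdot\vert\v x_{-i})}\mathcal{P}_i^{\v x}\otimes I)k_i(\v x,\cdot)\in H_i$. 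Hence, exactly as in the proof of Proposition \ref{prop:SteinConstruct} (via \cite[Theorem 4.21]{steinwart2008support}), $\mathcal{B}_i H_i$ is a RKHS on $S$ with reproducing kernel $(\v x,\v y)\mapsto(\mathcal{A}_{\pi_i(\cdot\vert\v x_{-i})}\mathcal{P}_i^{\v x}\otimes\mathcal{A}_{\pi_i(\cdot\vert\v y_{-i})}\mathcal{P}_i^{\v y})k_i(\v x,\v y)$ and $\|\mathcal{B}_i h_i\|_{\mathcal{B}_i H_i}=\|h_i\|_{H_i}$ modulo $\ker\mathcal{B}_i$. Finally, the standard fact that a finite sum of RKHSs of functions on the same set is again a RKHS, with reproducing kernel the sum of the individual kernels and squared norm $\min\{\sum_i\|h_i\|_{H_i}^2:\sum_i\mathcal{B}_i h_i=\,\cdot\,\}$, applied to $\mathcal{A}_\pi H=\sum_i\mathcal{B}_i H_i$, yields that $\mathcal{A}_\pi H$ is a RKHS with reproducing kernel $\sum_i(\mathcal{A}_{\pi_i(\cdot\vert\v x_{-i})}\mathcal{P}_i^{\v x}\otimes\mathcal{A}_{\pi_i(\cdot\vert\v y_{-i})}\mathcal{P}_i^{\v y})k_i(\v x,\v y)$, which is (\ref{eq:SteinBuild}), and with norm matching $\|\v h\|_H$.

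The main obstacle is the middle of the third paragraph: correctly propagating the "induces a Stein kernel" property through the slicing operator $\mathcal{P}_i^{\v x}$ --- verifying boundedness of $h_i\mapsto\mathcal{B}_i h_i(\v x)$ on $H_i$ and identifying its representer --- which requires the restriction theorem for reproducing kernels and careful bookkeeping of the adjoint of $\mathcal{P}_i^{\v x}$ together with the reparametrisation of the slice $\{\v x_{-i}\}\times S_i$ with $S_i$. The remaining ingredients (conditioning for the Stein identity and the sum theorem for the reassembly) are routine; the converse Stein property and the replacement of the isometry by an inequality when $\mathcal{A}_\pi$ fails to be injective are minor points handled, respectively, by choosing sufficiently rich test functions and by quotienting out $\ker\mathcal{A}_\pi$.
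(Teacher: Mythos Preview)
Your proposal is correct and follows essentially the same route as the paper. The paper's proof is more compressed: it directly writes $\mathcal{A}_{\pi_i(\cdot\vert\v x_{-i})}\mathcal{P}_i^{\v x}h_i(\v x)=\langle h_i,K_i(\v x)\rangle_{H_i}$ with $K_i(\v x)=(\mathcal{A}_{\pi_i(\cdot\vert\v x_{-i})}\mathcal{P}_i^{\v x}\otimes I)k_i(\v x,\cdot)$, citing only ``the hypothesis'' (that $\mathcal{A}_{\pi_i(\cdot\vert\v x_{-i})}$ induces a Stein kernel), and then applies \cite[Theorem~4.21]{steinwart2008support} once to the feature map $\v x\mapsto\v K(\v x)=(K_i(\v x))_{i=1}^d$ in the direct sum $H$, obtaining $k_\pi(\v x,\v y)=\langle\v K(\v x),\v K(\v y)\rangle_H=\sum_i\langle K_i(\v x),K_i(\v y)\rangle_{H_i}$. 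Your restriction-theorem/adjoint argument is exactly what justifies the paper's first step, which it leaves implicit; and your assembly via the sum-of-RKHSs theorem is equivalent to the paper's single invocation of Theorem~4.21 on the direct sum (the minimum in the sum-kernel norm is the infimum the paper writes down). So the only difference is granularity: you unpack the slicing and reassemble coordinate by coordinate, while the paper does both steps in one pass on the product space.
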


\begin{proof}%
Since the zero function is in $H_i$ and $\mathcal{A}_{\pi_i(\cdot \vert \v x)}$ is a Stein operator for $\pi_i(\cdot \vert \v x)$ for each $i=1,\dots,d$, $\mathbb{E}\mathcal{A}_\pi \v h(\v X) = 0$ for all $\v h \in H$ if and only if the conditional distributions of $\v X$ are equivalent to those of $\pi$. Therefore, $\mathcal{A}_\pi$ is a Stein operator for $\pi$ on $H$. For each $i=1,\dots,d$ and $\v x \in S$, let $K_i(\v x) = (\mathcal{A}_{\pi_i(\cdot\vert \v x)} \mathcal{P}_i^{\v x} \otimes I)k_i(\v x, \cdot) \in H_i$, and let $\v K(\v x) = (K_i(\v x))_{i=1}^d$.
By the hypothesis, for any $\v h \in H$,
\[
\mathcal{A}_{\pi}\v h(x) = \sum_{i=1}^d \mathcal{A}_{\pi_i(\cdot\vert \v x)}\mathcal{P}_i^{\v x}h_i(\v x) = \sum_{i=1}^d \langle h_i, K_i(x)\rangle_{H_i} = \langle h, \v K(x) \rangle_{H}.
\]
Defining $H_{\pi}$ to be the space of functions $\v h: S \to \mathbb{R}^d$ such that the norm
\[
\|\v h\|_{H_{\pi}} \coloneqq \inf_{\v \phi \in H} \{\|\v \phi\|_{H} \mbox{ such that } \v h(\v x) = \langle \phi,
\v K(\v x)\rangle_{H} \mbox{ for all }\v x \in S\},
\]
is finite, \cite[Theorem 4.21]{steinwart2008support} implies that $H_{\pi}$ is the reproducing kernel Hilbert space with
reproducing kernel
\begin{align*}
k_{\pi}(\v x,\v y) &= \langle \v K(\v x), \v K(\v y)\rangle_{H} = \sum_{i=1}^d \langle \mathcal{A}_{\pi_i(\cdot\vert \v x)} \mathcal{P}_i^{\v x} K_i(\v x), \mathcal{A}_{\pi_i(\cdot\vert \v y)} \mathcal{P}_i^{\v y} K_i(\v y)\rangle_{H_i} \\
&= \sum_{i=1}^d (\mathcal{A}_{\pi_i(\cdot\vert \v x)}\mathcal{P}_i^{\v x}\otimes \mathcal{A}_{\pi_i(\cdot\vert \v y)}\mathcal{P}_i^{\v y})k_i(\v x,\v y),
\end{align*}
where the last line follows since each $\mathcal{A}_{\pi_i(\cdot\vert \v x)}\mathcal{P}_i^{\v x}$ commutes with the inner product. Since
$H_{\pi}$ can be seen to be the image of $H$ under $\mathcal{A}_{\pi}$, the result follows. 
\end{proof}

\begin{example}[Canonical Stein Kernel on $\mathbb{R}^d$]
As an especially important example, if $\pi$ admits a smooth density $p$ on $\mathbb{R}^d$, then
applying (\ref{eq:SteinOpR}), we can take
$\mathcal{A}_{\pi_i(\cdot \vert \v x)} \v h(\v x) = \partial_{x_i} h_i(\v x) + \partial_{x_i}(\log p(\v x)) h_i(\v x),$
and so
\begin{equation}
\label{eq:CanonSteinOp}
\mathcal{A}_{\pi} \v h(\v x) = \nabla \cdot \v h(\v x) + \nabla \log p(\v x) \cdot \v h(\v x),
\end{equation}
which is the well-known Stein operator for densities on $\mathbb{R}^d$. To build the corresponding
Stein kernel, expanding out
\begin{multline*}
(\mathcal{A}_{\pi_i(\cdot \vert \v x)}\otimes \mathcal{A}_{\pi_i(\cdot \vert \v y)})k(\v x,\v y) 
= \partial_{x_i} \partial_{y_i} k(\v x,\v y) + \partial_{x_i} \log p(\v x)
\partial_{y_i} k(\v x,\v y) \\
+ \partial_{y_i} \log p(\v y) \partial_{x_i} k(\v x,\v y) 
+ k(\v x,\v y)\partial_{x_i} \log\pi(\v x) \partial_{y_i} \log p(\v y).
\end{multline*}
Therefore, the Stein kernel $k_{\pi}$ as given by (\ref{eq:SteinBuild}) becomes
\begin{multline}
\label{eq:CanonStein}
k_{\pi}(\v x,\v y) = \nabla_{\v x} \cdot \nabla_{\v y} k(\v x,\v y) + \nabla \log p(\v x) \cdot
\nabla_{\v y} k(\v x,\v y) \\ +
\nabla \log p(\v y) \cdot \nabla_{\v x} k(\v x,\v y) + k(\v x,\v y) \nabla \log p(\v x)
\cdot \nabla \log p(\v y),
\end{multline}
which is precisely the canonical Stein kernel of \cite{oates2017control, liu2016kernelized, ley2017stein, gorham2017measuring}. 
There is an extensive literature available for choosing reproducing kernels on $\mathbb{R}^d$. The most common choices are {\em radial basis function} kernels of the form 
\begin{equation}
\label{eq:RadialBasis}
k(\v x,\v y) = \kappa(\|\v x - \v y\|^2),\qquad \mbox{where}\qquad\kappa:\mathbb{R}_+\to\mathbb{R}.
\end{equation}
By the Schoenberg construction, such a kernel has the required positive-definiteness property (and therefore induces a RKHS) over any dimension $d$, if and only if $\kappa \in \mathcal{C}[0,\infty) \cap \mathcal{C}^{\infty}(0,\infty)$ is completely monotone, that is, for every non-negative integer $m$, the sign of the $m$-th derivative $\kappa^{(m)}(x)$ of $\kappa$ is $(-1)^m$ for any $x > 0$ \cite[Theorem 7.13]{wendland2004scattered}. Examples of completely monotonic functions include $x \mapsto (\alpha + x)^{-\beta}$ (inducing the inverse multiquadric kernel) and $x \mapsto e^{-\alpha x}$ (inducing the Gaussian kernel) for any $\alpha,\beta > 0$. Furthermore, for smooth functions $f,g$, if $f$ and $g'$ are completely monotonic and $g:\mathbb{R}_+ \to \mathbb{R}_+$, then $f \circ g$ is completely monotonic \cite[Theorem 2]{miller2001completely}. Therefore, $x \mapsto (\alpha + \log(1 + x))^{-1}$ (inducing the inverse-log kernel) is a completely monotone function.
\end{example}
\begin{example}[Marginal Stein Kernel on $\mathbb{R}^d$] Letting $k$ denote a reproducing kernel on $\mathbb{R}$, for a RKHS $H$ of functions $h:\mathbb{R}\to\mathbb{R}$, we can extend $H$ to an RKHS $H_i$ of functions $h^i:\mathbb{R}^d \to \mathbb{R}$ by $h^i(x_1,\dots,x_d) = h(x_i)$ for $\v x \in \mathbb{R}^d$. Since $H$ and $H_i$ are isomorphic, $H_i$ is a RKHS on $\mathbb{R}^d$ with reproducing kernel $k_i$ satisfying $k_i(\v x, \v y) = k(x_i, y_i)$ for $\v x,\v y \in \mathbb{R}^d$. The Stein operator (\ref{eq:CanonSteinOp}) applied to (\ref{eq:SteinBuild}) with these choices of underlying kernels yields the \emph{marginal Stein kernel} first considered in \cite{wang2017stein}: %
\begin{multline}
\label{eq:KCCKernel}
k_\pi(\v x, \v y)
= \textstyle\sum_{i=1}^d [\partial_{x_i} \partial_{y_i} k(x_i,y_i) + \partial_{x_i} \log p(\v x)
\partial_{y_i} k(x_i,y_i) \\
+ \partial_{y_i} \log p(\v y) \partial_{x_i} k(x_i,y_i) 
+ k(x_i,y_i)\partial_{x_i} \log\pi(\v x) \partial_{y_i} \log p(\v y)].
\end{multline}
The kernel (\ref{eq:KCCKernel}) enjoys an increased sensitivity to single-dimensional perturbations over (\ref{eq:CanonStein}), which becomes especially apparent with improved performance when the dimension $d$ is large. On the other hand, it is unable to distinguish between distributions with equivalent marginals. %
\end{example}

\begin{example}[Stein Kernel on $\mathbb{Z}^d$]
Let $\pi(\cdot)$ be a probability mass function on $\mathbb{Z}^d$. Any reproducing kernel on $\mathbb{R}^d$ is also a reproducing kernel on $\mathbb{Z}^d$. Applying (\ref{eq:SteinDiscreteOp}) with the choice of transition rates (\ref{eq:SteinDiscreteRates}) to (\ref{eq:SteinBuild}) immediately yields
\begin{multline*}
k_{\pi}(x,y) = \textstyle\sum_{i=1}^d [\pi^{i}(x)\pi^{i}(y) k(x+e_i,y+e_i) 
- \pi^{-i}(x)\pi^{i}(y) k(x,y+e_i)\\
- \pi^{i}(x)\pi^{-i}(y) k(x+e_i,y)
+ \pi^{-i}(x)\pi^{-i}(y) k(x,y)],
\end{multline*}
where $\pi^{\pm i}(x) = \pi(x \pm e_i)$ and $e_i$ denotes the $i$-th unit coordinate vector. In cases where $\pi(x)$ decreases rapidly in $x$, this choice of Stein kernel can succumb to significant numerical error. Instead, one would prefer to involve ratios of $\pi$, such as $r_i(x) = \pi(x - e_i) / \pi(x)$. Observe that, if $k$ is a positive-definite kernel on $\mathbb{Z}^d$, then letting $\pi^+(x) = \pi(x) + \ind\{\pi(x) = 0\}$,
\begin{equation}
\label{eq:DiscreteBaseKernel}
(x,y) \mapsto \frac{k(x,y)}{\pi^+(x)\pi^+(y)}
\end{equation}
is also a positive-definite (and therefore, reproducing) kernel on $\mathbb{Z}^d$. Applying (\ref{eq:SteinDiscreteOp}) with the choice (\ref{eq:SteinDiscreteRates}),
letting $\iota_i^x = \ind\{x + e_i \in \mbox{supp}(\pi)\}$, the Stein kernel becomes
\begin{multline}
k_{\pi}(x,y) = \textstyle\sum_{i=1}^d [\iota_i^x\iota_i^y k(x+e_i,y+e_i) 
- r_i(x)\iota_i^y k(x,y+e_i)\\
- r_i(y)\iota_i^x k(x+e_i,y)
+ r_i(x)r_i(y) k(x,y)].
\end{multline}
\end{example}

\subsection{Convergence determining properties}

One of the most important properties for a discrepancy measure is its ability to distinguish (or separate) a target measure from other distributions. We say that the KSD $\mathbb{S}(\cdot \Vert \pi)$ is \emph{separating} when
\[
\mathbb{S}(\mu \Vert \pi) = 0 \mbox{ if and only if } \mu \equiv \pi.
\]
A stronger condition for the KSD $\mathbb{S}(\cdot \Vert \pi)$ is to be \emph{convergence determining}:
\[
\mathbb{S}(\mu_n \Vert \pi) \to 0 \mbox{ implies } \mu_n \cd \pi.
\]
To our knowledge, \cite[Theorem 8]{gorham2017measuring}, \cite[Theorem 4]{chen2019stein}, and \cite[Theorems 3 \& 4]{chen2018stein} comprise the KSDs that are currently known to be convergence determining.
It is known that separation alone does not necessarily imply that a KSD is convergence determining for arbitrary sequences of probability measures \cite[Theorem 6]{gorham2017measuring}. However, separation is sufficient to guarantee that the KSD is convergence determining for tight sequences of probability measures: by \cite[Lemma 3.4.3]{ethier2009markov}, if $\{\mu_n\}_{n=1}^{\infty} \subset \mathcal{P}(S)$ is a tight sequence of probability measures and $\mathbb{S}(\cdot \Vert \pi)$ is separating, then $\mathbb{S}(\mu_n \Vert \pi) \to 0$ if and only if $\mu_n$ converges weakly to $\pi$. In the case where $S$ is locally compact, an extension involving random probability measures is presented in Proposition \ref{prop:SepImpliesConv}\ref{enu:SepTight}. For a separating KSD to be convergence determining for \emph{arbitrary} sequences of probability measures, it suffices to show that the KSD can \emph{detect tightness}, that is, if $\mathbb{S}(\mu_n \Vert \pi) \to 0$, then $\{\mu_n\}_{n=1}^{\infty}$ is tight. For this, the following assumption is sufficient.
\begin{assumption}
\label{ass:KernelExplode}
For a continuous reproducing Stein kernel $k_\pi$ on a locally compact space $S$ with corresponding RKHS $H_\pi$, there exists $h \in H_\pi$ such that, for any $M > 0$, there is a compact set $K \subseteq S$ satisfying $\inf_{x \notin K} h(x) > M$.
\end{assumption}
Here, we adopt the convention that the infimum over the empty set is infinite, thus Assumption \ref{ass:KernelExplode} is satisfied if $S$ is compact. 

\begin{proposition}[Convergence Determination]
\label{prop:SepImpliesConv}
Suppose that $\mathbb{S}(\cdot \Vert \pi)$ is separating and $S$ is locally compact. Let $(\Omega, \mathcal{E}, \mathbb{P})$ be an underlying probability space and suppose that $\{\mu_n\}_{n=1}^{\infty}$ is a sequence of random probability measures, that is, $\mu_n: \Omega \to \mathcal{P}(S)$ for $n=1,2,\dots$. Then $\mathbb{S}(\mu_n \Vert \pi) \cp 0$ implies $\mu_n \cd \pi$ if either
\begin{enumerate*}[label=(\alph*)]
\item \label{enu:SepTight}$\{\mu_n(\omega)\}_{n=1}^{\infty}$ is tight for almost every $\omega \in \Omega$, or
\item \label{enu:SepConv}Assumption \ref{ass:KernelExplode} holds.
\end{enumerate*}
\end{proposition}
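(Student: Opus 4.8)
The plan is to reduce the assertion to a purely deterministic statement about sequences of probability measures and then transfer it back via the subsequence principle for convergence in probability. Since $\pi$ is deterministic and $S$ is Polish, weak convergence on $\mathcal{P}(S)$ is metrized by the L\'evy--Prokhorov metric $\rho$, and $\mu_n \cd \pi$ is equivalent to $\rho(\mu_n,\pi) \cp 0$. Given $\mathbb{S}(\mu_n\Vert\pi) \cp 0$, it therefore suffices to show that every subsequence of $\{\mu_n\}$ admits a further subsequence $\{\mu_{n_k}\}$ with $\rho(\mu_{n_k},\pi)\to 0$ almost surely. Passing first to a subsequence along which $\mathbb{S}(\mu_{n_k}\Vert\pi)\to 0$ almost surely, the problem is reduced to the following deterministic claim: if $\{\nu_k\}_{k=1}^{\infty}\subset\mathcal{P}(S)$ is tight and $\mathbb{S}(\nu_k\Vert\pi)\to 0$, then $\nu_k\cd\pi$. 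This is exactly \cite[Lemma 3.4.3]{ethier2009markov} combined with the separating property of $\mathbb{S}(\cdot\Vert\pi)$, as already noted preceding Assumption \ref{ass:KernelExplode}. For part \ref{enu:SepTight}, tightness of $\{\mu_{n_k}(\omega)\}_k$ holds for a.e.\ $\omega$ by hypothesis (a subsequence of a tight sequence is tight), so on a full-measure event the deterministic claim applies, and the subsequence principle gives $\mu_n\cd\pi$.

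For part \ref{enu:SepConv} the only extra ingredient needed is to deduce tightness of $\{\nu_k\}$ from $\mathbb{S}(\nu_k\Vert\pi)\to 0$ under Assumption \ref{ass:KernelExplode}. Let $h\in H_\pi$ be the function furnished by the assumption. Joint continuity of $k_\pi$ forces $h$ to be continuous, since $|h(x)-h(y)|^2 \le \|h\|_{H_\pi}^2\big(k_\pi(x,x)-2k_\pi(x,y)+k_\pi(y,y)\big)$; hence $h$ is bounded on the compact set associated with $M=1$ and exceeds $1$ off it, so there is $b\le 0$ with $h\ge b$ on all of $S$. Next, $\mathbb{S}(\pi\Vert\pi)=0$: writing $g_\mu = \int_S k_\pi(x,\cdot)\,\mu(\dd x)\in H_\pi$, the Stein property gives $\langle h', g_\pi\rangle_{H_\pi} = \pi(\mathcal{A}_\pi\tilde h')=0$ for every $h'=\mathcal{A}_\pi\tilde h'\in H_\pi$, whence $g_\pi = 0$ and in particular $\pi(h)=0$. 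Consequently, by the reproducing identity $\mu(h)=\langle h, g_\mu\rangle_{H_\pi}$ underlying (\ref{eq:KSD}) and Cauchy--Schwarz,
\[
|\nu_k(h)| = |\langle h, g_{\nu_k}\rangle_{H_\pi}| \le \|h\|_{H_\pi}\,\mathbb{S}(\nu_k\Vert\pi) \longrightarrow 0 .
\]
Now fix $\delta>0$ and apply Assumption \ref{ass:KernelExplode} with $M=(1-b)/\delta$ to obtain a compact $K$ with $\inf_{x\notin K}h(x)>M$. Since $h-b\ge 0$ on $S$ and $h-b>M$ on $S\setminus K$,
\[
\nu_k(h)-b = \nu_k(h-b) \ge \int_{S\setminus K}(h-b)\,\dd\nu_k \ge M\,\nu_k(S\setminus K) .
\]
As $\nu_k(h)\to 0$, there is $k_0$ with $\nu_k(h)-b\le 1-b$ for $k\ge k_0$, giving $\nu_k(S\setminus K)\le\delta$ for all $k\ge k_0$; enlarging $K$ to absorb the individually tight measures $\nu_1,\dots,\nu_{k_0-1}$ produces a single compact set witnessing $\nu_k(S\setminus K)\le\delta$ for all $k$. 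Since $\delta$ was arbitrary, $\{\nu_k\}$ is tight, and the reduction above closes the argument exactly as in part \ref{enu:SepTight}.

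The main obstacle I anticipate is the bookkeeping around random probability measures and convergence in probability --- justifying the passage to almost surely convergent subsequences, checking that the deterministic conclusion applies on a full-measure event not depending on the chosen subsequence, and recovering $\mu_n\cd\pi$ from it via the Prokhorov-metric characterization. On the analytic side, the delicate point is ensuring that the identity $\mu(h)=\langle h, g_\mu\rangle_{H_\pi}$ (equivalently, interchange of integration with the inner product, which underpins (\ref{eq:KSD})) remains valid for the possibly unbounded $h$ of Assumption \ref{ass:KernelExplode}; this is precisely where continuity of $h$, its lower bound, and finiteness of $\mathbb{S}(\nu_k\Vert\pi)$ are brought to bear.
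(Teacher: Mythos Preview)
Your proposal is correct and follows essentially the same route as the paper: both parts reduce to the deterministic fact (via \cite[Lemma 3.4.3]{ethier2009markov} and separation) that a tight sequence with vanishing KSD converges weakly to $\pi$, and for \ref{enu:SepConv} both derive tightness from the diverging $h\in H_\pi$ by bounding $|\nu_k(h)|\le\|h\|_{H_\pi}\mathbb{S}(\nu_k\Vert\pi)$ and exploiting the lower bound on $h$. The only cosmetic difference is that the paper handles the random-measure bookkeeping in \ref{enu:SepTight} via tightness in $\mathcal{M}(S)$ with the vague topology (Kallenberg's Lemma 16.15) rather than your Prokhorov-metric subsequence principle, but the two reductions are equivalent here.
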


\begin{proof}%
Assume that \ref{enu:SepTight} holds. For any Borel set $B \subseteq S$ and $\omega \in \Omega$, $\{\mu_n(\omega, B)\}_{n=1}^{\infty}$ is bounded in $\mathbb{R}_+$, and so $\{\mu_n\}_{n=1}^{\infty}$ is a tight sequence of random elements in $\mathcal{M}(S)$ \cite[Lemma 16.15]{kallenberg2006foundations}. Consider a weakly convergent subsequence $\mu_{n_k} \cd \mu$. Passing through to a further subsequence if necessary, $\mathbb{S}(\mu_{n_k}\Vert \pi) \to 0$ with probability one \cite[Lemma 4.2]{kallenberg2006foundations}. From \cite[Lemma 3.4.3]{ethier2009markov} and Prohorov's Theorem \cite[Theorem 16.3]{kallenberg2006foundations}, if $\mathbb{S}(\cdot\Vert\pi)$ is separating and $\mathbb{S}(\nu_n \Vert \pi) \to 0$ for a sequence of tight probability measures $\{\nu_n\}_{n=1}^{\infty}$, then $\nu_n \to \pi$ weakly. Therefore, since there exists a set of probability one on which $\mathbb{S}(\mu_{n_k}\Vert \pi) \to 0$ and $\{\mu_{n_k}(\omega)\}_{k=1}^{\infty}$ is a tight sequence of probability measures, $\mu_{n_k}(\omega) \to \pi$ weakly with probability one, implying that $\mu \equiv \pi$. Since $\{\mu_n\}_{n=1}^{\infty}$ was an arbitrary weakly convergent subsequence, $\mu_n \cd \pi$. 

Now suppose that \ref{enu:SepConv} holds. Let $\{\mu_{n_j}\}_{j=1}^{\infty}$ be an arbitrary subsequence of $\{\mu_n\}_{n=1}^{\infty}$. Since $\mathcal{M}(S)$ is Polish \cite[Theorem A2.3]{kallenberg2006foundations}, if we can show that there exists a further subsequence of $\{\mu_{n_j}\}_{j=1}^{\infty}$ that converges in distribution to $\pi$, then $\mu_n \cd \pi$. Since $\mathbb{S}(\mu_{n_j} \Vert \pi) \cp 0$ as $j\to \infty$, $\mu_{n_j}(h) \cp 0$, where $h \in H_\pi$ has the property described in Assumption \ref{ass:KernelExplode}. Passing through to a subsequence if necessary, we may assume that $\mu_{n_j}(h) \cas 0$, and consequently, $\sup_j \mu_{n_j}(h) < +\infty$ with probability one. Since $k_\pi$ is continuous, $h$ is also continuous (see \cite[pg. 345]{aronszajn1950theory}), and since there exists a compact set $N$ such that $\inf_{x \notin N} h(x) > 0$, $h$ is bounded below by $-C$ where $C \geq 0$. Therefore, the function $\tilde{h} = h + C$ is non-negative, and $\sup_j \mu_{n_j}(\tilde{h}) < +\infty$ with probability one. For $\epsilon > 0$, there exists a compact set $K$ such that $\inf_{x \notin K} \tilde{h}(x)~>~\epsilon^{-1} \sup_j \mu_{n_j}(\tilde{h})$ and so with probability one,
\[
\sup_j \mu_{n_j}(S \setminus K) \leq \left(\inf_{x\notin K} \tilde{h}(x)\right)^{-1}\sup_j \mu_{n_j}(\tilde{h}) < \epsilon.
\]
Since $\epsilon$ was arbitrary, $\{\mu_{n_j}(\omega)\}_{j=1}^{\infty}$ is tight for almost every $\omega \in \Omega$, and $\mu_{n_j} \cd \pi$ by \ref{enu:SepTight}. Therefore, $\mu_n \cd \pi$.

\end{proof}

Proposition \ref{prop:SepImpliesConv} suggests a general strategy for identifying Stein kernels that induce convergence determining KSD via Assumption \ref{ass:KernelExplode}. 
In particular, for $S = \mathbb{R}^d$, to satisfy Assumption \ref{ass:KernelExplode}, one should first look for Stein kernels such that $|k_\pi(\v x,\v y)| \to +\infty$ as $\|\v x\| \to \infty$ for fixed $\v y \in \mathbb{R}^d$. As noted in \cite{gorham2017measuring}, for the canonical Stein kernel, this requires that the base kernel $k$ diminishes sufficiently slowly relative to the growth of $\nabla \log p$. For this reason, the inverse multiquadric (IMQ) kernel $k(\v x, \v y) = (\alpha + \|\v x-\v y\|^2)^{-\beta}$ for $\alpha > 0$, $\beta < \frac12$, is a good choice for subgaussian distributions, inducing a convergence-determining KSD \cite[Theorem 8]{gorham2017measuring}. In Example \ref{ex:IMQScore}, we show directly that the IMQ score kernel satisfies Assumption \ref{ass:KernelExplode}. 

\begin{example}[IMQ Score Kernel]
\label{ex:IMQScore}
Consider $k_\pi$ the canonical Stein kernel (\ref{eq:CanonStein}) on $\mathbb{R}^d$ constructed from the IMQ score kernel \cite{chen2018stein}
\[
k_{\text{IMQS}}(\v x, \v y) = (\alpha^2 + \|\nabla f(\v x) - \nabla f(\v y)\|^2)^{-\beta},
\]
where $\alpha,\beta > 0$, and $f = -\log p$. Assume that $\nabla f$ is surjective and $M$-Lipschitz continuous, that is, there exists $M > 0$ such that $\|\nabla f(\v x)-\nabla f(\v y)\| \leq M\|\v x-\v y\|$ for all $\v x, \v y \in \mathbb{R}^d$. Furthermore, assume that $|k_\pi(\v x, \v y)| \to \infty$ as $\|\v x\| \to \infty$ for any $\v y \in \mathbb{R}^d$. For each $i=1,\dots,d$, let $\v y_i^+,\v y_i^-$ satisfy $\nabla f(\v y_i^+) = \v e_i$ and $\nabla f(\v y_i^-) = -\v e_i$, where $\v e_i$ is the $i$-th standard basis vector in $\mathbb{R}^d$. Letting \[K(\v x, \v y) = \nabla f(\v x) \cdot \nabla f(\v y) k_{\text{IMQS}}(\v x, \v y),\] we observe that $k_\pi(\v x, \v y) / K(\v x, \v y) \to 1$ as $\|\v x\|,\|\v y\| \to \infty$. Now, letting $\kappa(x) = (\alpha^2 + x)^{-\beta}$, since $\kappa$ is completely monotone,
\[
\sum_{i=1}^d K(\v x, \v y_i^+) + K(\v x, \v y_i^-) = \sum_{i=1}^d \partial_i f(\v x)[\kappa(\|\nabla f(\v x) - \v e_i\|^2) - \kappa(\|\nabla f(\v x) + \v e_i\|^2)] \to +\infty
\]
as $\|\v x\| \to \infty$. The same is also true for the function $\v x \mapsto \sum_{i=1}^d [k_\pi(\v x, \v y_i^+) + k_\pi(\v x, \v y_i^-)]$, which lies in $H_\pi$. Therefore, $k_\pi$ satisfies Assumption \ref{ass:KernelExplode}. 
\end{example}

Our remaining challenge now is finding sufficient conditions for $\mathbb{S}$ to be separating. Assuming that $S$ is locally compact, a reproducing kernel $k$ is called \emph{$\mathcal{C}_0$-universal} if its corresponding RKHS $H$ of functions $h:S\to \mathbb{R}$ is dense in $\mathcal{C}_0(S)$ and $k(x,\cdot) \in \mathcal{C}_0(S)$ for all $x \in S$ \cite{sriperumbudur2011universality}. Any RKHS over $\mathbb{R}^d$ with a non-constant reproducing kernel of the radial basis function form (\ref{eq:RadialBasis}) is $\mathcal{C}_0$-universal if $\kappa \in \mathcal{C}_0(S)$ \cite[Theorem 17]{micchelli2006universal}. Moreover, any restriction of a $\mathcal{C}_0$-universal kernel on $\mathbb{R}^d$ to a subset (for example, $\mathbb{Z}^d$), is also $\mathcal{C}_0$-universal on that restricted space. Every KSD arising from a $\mathcal{C}_0(S)$-universal kernel on $\mathbb{R}^d$ is separating for smooth densities from a certain restricted class of probability distributions \cite[Theorem 2.2]{chwialkowski2016kernel}. It is important that we can show it can separate $\pi$ from \emph{every} other probability measure. 

A convenient condition, inspired by \cite{glynn1996liapounov, gorham2019}, is presented in Proposition \ref{prop:SepConditions}. Recall that a Markov process $X_t$ is \emph{exponentially ergodic} if there exists some $C,\rho > 0$ such that for any bounded measurable function $f$,
\begin{equation}
\label{eq:ExpErgodic}
\|P_t f - \pi(f)\|_{\infty} \leq C \|f\|_{\infty} e^{-\rho t},\qquad t \geq 0.
\end{equation}
Necessary and sufficient conditions for exponential ergodicity of Markov processes are available in \cite{down1995exponential}. For overdamped Langevin diffusion targeting a measure $\pi$ that admits a smooth density $p$, that is, $X_t^\pi$ satisfying
\[
\dd X_t^\pi = \nabla \log p(X_t^\pi) \dd t + \sqrt{2} \dd W_t,
\]
which is the multivariate generalization of (\ref{eq:LangevinSDE}), if $\pi \in \mathcal{C}_0^2(S)$ and there exists some $\gamma \in (0,1)$ such that
\[
\liminf_{\|x\|\to\infty} (1-\gamma)\|\nabla \log p(x)\|^2 + \Delta \log p(x) > 0,
\]
then $X_t^\pi$ is exponentially ergodic \cite[Theorem 2.3]{roberts1996}. On the other hand, for finite case $|S| < \infty$, by combining \cite[Theorem 5.3]{down1995exponential} and \cite[Theorem 4.9]{levin2017markov}, we can see that \emph{any} irreducible finite-state Markov jump process is exponentially ergodic. 

\begin{proposition}[Separation]
\label{prop:SepConditions}
Assume that $S$ is locally compact. The kernelized Stein discrepancy $\mathbb{S}(\cdot \Vert \pi)$ corresponding to the reproducing Stein kernel $k_{\pi}$ in Proposition \ref{prop:SteinConstruct} is separating if $\mathcal{A}_\pi$ is the generator of an exponentially ergodic Markov process and $k$ is $\mathcal{C}_0$-universal.
\end{proposition}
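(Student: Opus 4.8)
\noindent The claim is that $\mathbb{S}(\,\cdot\,\Vert\pi)$ is \emph{separating}: $\mathbb{S}(\mu\Vert\pi)=0$ iff $\mu\equiv\pi$. The reverse implication is immediate, since a $\pi$-ergodic generator is in particular a Stein operator for $\pi$, so $\pi(\mathcal{A}_\pi h)=0$ for all $h\in H$ and hence $\mathbb{S}(\pi\Vert\pi)=0$ by \eqref{eq:KSD}. For the forward implication I would first rewrite \eqref{eq:KSD}: by the reproducing property in $H_\pi$, $\mathbb{S}(\mu\Vert\pi)^2=\|\int_S k_\pi(x,\cdot)\,\mu(\mathrm{d}x)\|_{H_\pi}^2$ and $\langle \mathcal{A}_\pi h,\int_S k_\pi(x,\cdot)\,\mu(\mathrm{d}x)\rangle_{H_\pi}=\mu(\mathcal{A}_\pi h)$, so $\mathbb{S}(\mu\Vert\pi)=0$ is exactly the statement that $\mu(\mathcal{A}_\pi h)=0$ for \emph{every} $h\in H$. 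Everything therefore reduces to: a probability measure $\mu$ annihilating $\mathcal{A}_\pi$ on the RKHS $H$ must equal $\pi$.

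\noindent The plan is to prove that such a $\mu$ is invariant for the Feller semigroup $\{P_t\}_{t\ge 0}$ of the underlying process, and then use exponential ergodicity to conclude. Invariance would follow from an Echeverria--Weiss / infinitesimal-invariance argument as in \cite[\S4.9]{ethier2009markov}: $\mathcal{C}_0$-universality makes $H$ a \emph{dense} subspace of $\mathcal{C}_0(S)$, and $\mathcal{A}_\pi$ restricted to $H$ inherits the positive maximum principle from the Feller generator $\mathcal{A}_\pi$; granted in addition that the range $(\lambda-\mathcal{A}_\pi)(H)$ is dense in $\mathcal{C}_0(S)$ for some $\lambda>0$, the Hille--Yosida--Ray theorem shows that $\overline{\mathcal{A}_\pi|_H}$ generates a Feller semigroup which, by uniqueness, must be $\{P_t\}$, so $H$ is a core for $\mathcal{A}_\pi$. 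The identity $\mu(\mathcal{A}_\pi h)=0$ then extends from $H$ to all of $\mathrm{dom}(\mathcal{A}_\pi)$ by graph-norm density (the functional $f\mapsto\mu(\mathcal{A}_\pi f)$ being graph-continuous, since $|\mu(\mathcal{A}_\pi f)|\le\|\mathcal{A}_\pi f\|_\infty$), equivalently $\mu(R_\lambda\phi)=\lambda^{-1}\mu(\phi)$ for all $\lambda>0$ and all $\phi\in\mathcal{C}_0(S)$, from which $\mu P_t=\mu$ for all $t\ge 0$ follows by Laplace inversion and strong continuity. To verify the range condition I would use $\mathcal{C}_0$-universality a second time through the \emph{characteristic} property it confers: a finite signed measure $\nu$ annihilating $(\lambda-\mathcal{A}_\pi)(H)$ satisfies $\nu(\mathcal{A}_\pi h)=\lambda\,\nu(h)$ for all $h\in H$; testing against $h=k(\cdot,y)$ and tracking the reproducing-property bookkeeping leads to an eigenrelation at the positive value $\lambda$ for the kernel embedding $v=\int_S k(\cdot,x)\,\nu(\mathrm{d}x)\in H$, which is incompatible with $\mathcal{A}_\pi$ generating a contraction semigroup; hence $v=0$, and the characteristic property forces $\nu=0$.

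\noindent Finally, exponential ergodicity pins down $\pi$: for any $\mu'\in\mathcal{P}(S)$ and bounded measurable $f$, $|\mu'(P_t f)-\pi(f)|=|\mu'(P_t f-\pi(f))|\le\|P_t f-\pi(f)\|_\infty\le C\|f\|_\infty e^{-\rho t}\to 0$, so $\mu' P_t\to\pi$ weakly and $\pi$ is the \emph{unique} invariant probability measure. As $\mu$ is invariant, $\mu=\pi$, i.e.\ $\mu\equiv\pi$, and so $\mathbb{S}(\,\cdot\,\Vert\pi)$ is separating.

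\noindent I expect the main obstacle to be exactly the range/core step. What $\mathcal{C}_0$-universality supplies directly is that $H$ is dense in $\mathcal{C}_0(S)$ \emph{for the uniform norm}; this is strictly weaker than density of $H$ in $\mathrm{dom}(\mathcal{A}_\pi)$ for the graph norm of the (typically unbounded, second-order) operator $\mathcal{A}_\pi$, and the asymmetry of $(\mathcal{A}_\pi\otimes I)k$ makes the eigenfunction computation delicate. Closing this gap is where the contraction-semigroup structure of $\mathcal{A}_\pi$ and the characteristic property of $\mathcal{C}_0$-universal kernels genuinely have to be combined; once it is in place, the remainder is routine Feller-semigroup theory, with exponential ergodicity entering only to identify $\pi$ as the unique invariant law.
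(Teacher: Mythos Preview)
Your route is genuinely different from the paper's, and the gap you yourself flag --- the range/core step --- is real and is exactly what the paper's argument avoids. You try to promote $\mu(\mathcal{A}_\pi h)=0$ on $H$ to infinitesimal invariance on all of $\mathrm{dom}(\mathcal{A}_\pi)$ by showing $H$ is a core, but $\mathcal{C}_0$-universality only gives uniform-norm density of $H$, not graph-norm density, and your eigenrelation sketch for the range condition is not convincing: there is no evident adjoint structure for $\mathcal{A}_\pi$ on $H$ that would turn $\nu(\mathcal{A}_\pi k(\cdot,y))=\lambda\,\nu(k(\cdot,y))$ into a bona fide eigenvalue problem for the embedding $v=\int k(\cdot,x)\,\nu(\mathrm{d}x)$.

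The paper sidesteps the core question entirely with a short resolvent trick. First, by $\mathcal{C}_0$-universality (hence the characteristic property), it suffices to show $\mu(h)=\pi(h)$ for every $h\in H$, equivalently $\mu(h)=0$ whenever $h\in H$ with $\pi(h)=0$. The key observation is that the semigroup, and therefore the resolvent $R_\lambda=\int_0^\infty e^{-\lambda t}P_t\,\mathrm{d}t$, maps $H$ into $H$: viewing $P_t h$ as a Bochner integral against the transition kernel gives $\|P_t h\|_H\le\|h\|_H$, whence $\|R_\lambda h\|_H\le\lambda^{-1}\|h\|_H$. Thus $R_\lambda h\in H$, so the hypothesis applies to it and $\mu(\mathcal{A}_\pi R_\lambda h)=0$. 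Using $(\lambda-\mathcal{A}_\pi)R_\lambda=I$,
\[
|\mu(h)|=|\mu((\lambda-\mathcal{A}_\pi)R_\lambda h)|\le \lambda\,|\mu(R_\lambda h)|\le \lambda\,\|R_\lambda h\|_\infty.
\]
Here exponential ergodicity does more work than mere uniqueness of the invariant law: since $\pi(h)=0$ one has $\|P_t h\|_\infty\le C\|h\|_\infty e^{-\rho t}$, so $\|R_\lambda h\|_\infty\le C\|h\|_\infty/(\lambda+\rho)$ and $\lambda\|R_\lambda h\|_\infty\to 0$ as $\lambda\to 0^+$, giving $\mu(h)=0$. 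No core argument, no Echeverria--Weiss, no invariance step --- the resolvent stays inside $H$, which is precisely the fact that makes the problem tractable.
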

\begin{proof}%
By construction, $\mathbb{S}(\mu \Vert \pi) = 0$ if $\mu \equiv \pi$, so assume that $\mathbb{S}(\mu \Vert \pi) = 0$, or in other words, $\mu(\mathcal{A}_\pi h) = 0$ for any $h \in H$ (the RKHS corresponding to $k$). Because $k$ is $\mathcal{C}_0$-universal, to show that $\mu \equiv \pi$, it will suffice to show that $\mu(h) = \pi(h)$ for any function $h \in H$ (see, for example, \cite[pg. 51]{muandet2017kernel}). It further suffices to show that $\mu(h) = 0$ for any $h \in H$ satisfying $\pi(h) = 0$. Let $P_t = e^{t \mathcal{A}_\pi}$ be the semigroup corresponding to $\mathcal{A}_\pi$, which, by assumption, satisfies (\ref{eq:ExpErgodic}). Consider the resolvent operators $\{R_\lambda\}_{\lambda > 0}$ defined for $f \in \mathcal{C}_0(S)$ by
\[
R_\lambda f = \int_0^{\infty} e^{-\lambda t} P_t f \; \dd t,\qquad \lambda > 0,
\]
which satisfy for any $\lambda > 0$, $(\lambda - \mathcal{A}_{\pi})R_\lambda = I$, where $I$ is the identity operator (see \cite[Theorem 19.4]{kallenberg2006foundations}). Since $k$ is $\mathcal{C}_0$-universal, $H \subset \mathcal{C}_0(S)$ and by the properties of the Bochner integral, $\|R_\lambda h\|_H \leq \lambda^{-1} \|P_t h\|_H \leq \lambda^{-1} \|h\|_H$, implying $R_\lambda\;:\;H \to H$. Choosing some fixed $h \in H$ with $\pi(h) = 0$, together with the hypotheses, this implies that $\mu(\mathcal{A}_\pi R_\lambda h) = 0$ and so
\begin{align}
|\mu(h)| &= |\mu((\lambda - \mathcal{A}_\pi) R_\lambda h)|,\nonumber\\
&\leq \lambda |\mu(R_\lambda h)| + |\mu(\mathcal{A}_\pi R_\lambda h)| \leq \lambda \|R_\lambda h\|_\infty.\label{eq:MuResolvent}
\end{align}
Now, since $\pi$ is the stationary distribution of $\{P_t\}_{t \geq 0}$ and $\pi(h) = 0$, $\pi(P_t h) = 0$ for any $t \geq 0$. Therefore, since $P_t$ is geometrically ergodic and $h$ is bounded,
\[
\|R_\lambda h\|_{\infty} \leq \int_0^\infty e^{-\lambda t}\|P_t h - \pi(h)\|_{\infty} \dd t \leq C \|h\|_\infty \int_0^\infty e^{-(\lambda + \rho) t} \dd t = \frac{C \|h\|_{\infty}}{\lambda + \rho},
\]
and, in particular, $\lambda \|R_\lambda h\|_{\infty} \to 0$ as $\lambda \to 0^+$. Together with (\ref{eq:MuResolvent}), this implies that $\mu(h) = 0$, and since $h \in H$ was arbitrary, the result follows.
\end{proof}

Proposition \ref{prop:SepConditions} can be extended to kernels constructed from Proposition \ref{prop:SteinBuild}, simply by applying the result to conditional probability measures. However, extending Proposition \ref{prop:SepConditions} to the canonical reproducing Stein kernel on $\mathbb{R}^d$ is more challenging, since the corresponding Stein operator differs from a Markov generator by an extra derivative. Fortunately, this special case has been treated in \cite{gorham2017measuring} using results from \cite{gorham2019}. In short, exponential convergence in the semigroup is still relevant, but required for Lipschitz-continuous, rather than bounded, test functions. Therefore, if the overdamped Langevin diffusion corresponding to $\pi$ mixes exponentially fast in Wasserstein distance, then the kernelized Stein discrepancy is separating. While exponential ergodicity of overdamped Langevin diffusion is known to hold for light-tailed target densities, sufficient conditions on exponential convergence in the Wasserstein metric often require log-concavity of the target density $\pi$, or some other distant dissipativity condition \cite{cattiaux2014semi, gorham2019}. In particular, the KSD for the canonical Stein kernel on $\mathbb{R}^d$ is separating if $\pi$ is distantly dissipative \cite{gorham2017measuring}.
From \cite[eq. 12]{hansen2003geometric}, distant dissipativity also implies exponential ergodicity of overdamped Langevin diffusion.

\section{Stein Importance Sampling}
\label{sec:SIS}
Importance sampling is a classic Monte Carlo technique \cite{kroese2013handbook}: for any (possibly unnormalized) target probability measure $\pi$, if $X_1,\dots,X_n$ are independent and identically distributed samples from a probability measure $\nu$ with $\pi \ll \nu$, then for $\hat{\pi}^n$ defined by (\ref{eq:WeightedAvg}),
\begin{equation}
\label{eq:ImportanceSampling}
\hat{\pi}^n(\phi) \cp \pi(\phi) \qquad \mbox{if} \qquad w_i = \frac{\partial\pi /\partial \nu(X_i)}{\sum_{i=1}^n \partial \pi / \partial \nu(X_i)}.
\end{equation}
By the ergodic theorem, the same is true if $X_1,\dots,X_n$ come from a $\nu$-ergodic Markov chain. 
Furthermore, in both cases, it can be shown that the rate of convergence as $n\to \infty$ is $\mathcal{O}_{\mathbb{P}}(n^{-1/2})$, for any $\phi \in \mathcal{C}_0(S)$. 
Unfortunately, for this choice of weights, $\hat{\pi}^n(\phi)$ will often have prohibitively large variance unless $\pi$ and $\nu$ are close in total variation, and this is further exacerbated in high dimensions. On the other hand, in cases where $\nu$ is constructed to be a close approximation for $\pi$, such as when $X_1,\dots,X_n$ are derived from some approximate sampling algorithm, explicit or computable forms for $\nu$ are rarely available.

These issues can be avoided entirely using Stein importance sampling \cite{Liu2017}. 
Recall that,
for a reproducing Stein kernel constructed with respect to a target measure $\pi$, the kernelized Stein discrepancy between a weighted empirical measure (\ref{eq:WeightedAvg}) and $\pi$ is
\begin{equation}
\label{eq:WeightedKSD}
\mathbb{S}(\hat{\pi}^n \Vert \pi) = \sum_{i,j=1}^n w_i w_j k_{\pi}(X_i, X_j)
= \v w^{\top} \m K_{\pi} \v w,
\end{equation}
where $\m K_{\pi} = (k_{\pi}(x_i, x_j))_{i,j=1}^n$ is the Gram matrix associated to $k_{\pi}$ and the points $X_1,\dots,X_n$. To obtain estimators of the form (\ref{eq:WeightedAvg}) that most closely match a given target measure, Stein importance sampling involves choosing weights $\v w = (w_1,\dots,w_n)$ that minimize the KSD (\ref{eq:WeightedKSD}), that is, the solution to the constrained quadratic program
\begin{equation}
\label{eq:BBISQuadProg}
\argmin_{\v w}\left\lbrace \v w^\top \m K_{\pi} \v w \, : \, \sum_{i=1}^n w_i = 1,\quad \v w \geq \v 0 \right\rbrace.
\end{equation}
There are two major advantages to this: (i) the underlying density of the samples $X_1,\dots,X_n$ is no longer needed, and (ii) the mean squared error of the corresponding estimator is often smaller than that obtained using classical importance sampling. %
One drawback of Stein importance sampling is the cubic computational complexity in the number of samples, however, many practitioners may not seek any more than a few thousand effective samples, for which solving (\ref{eq:BBISQuadProg}) is typically inexpensive.

\subsection{Stein kernel estimators}
If computing a Stein kernel is either expensive or intractable, one might instead seek to conduct Stein importance sampling with only a collection of estimators of a Stein kernel. To be valid, it is necessary that the Gram matrix formed from these kernels remains positive-semidefinite with high probability. To cover this condition and facilitate further theoretical analysis, we extend the definition of \emph{Gram-de Finetti matrices}, introduced in \cite{dovbysh1984gram}, to the kernel setting (Definition \ref{def:GramDeFinetti}).
\begin{definition}[Gram-de Finetti Kernel Arrays]
\label{def:GramDeFinetti}
An infinite array of random kernels $(k_{ij})_{i,j=1}^{\infty}$ on a probability space $(\Omega, \mathcal{E}, \mathbb{P})$ with $k_{ij}(\omega):S\times S \to \mathbb{R}$ for each $i,j=1,2,\dots$, is \emph{Gram-de Finetti} if for every sequence $\{x_i\}_{i=1}^{\infty} \subset S$, $(k_{ij}(x_i,x_j))_{i,j=1}^{\infty}$ is a Gram-de Finetti matrix, that is, a symmetric jointly exchangeable array ($k_{ij}(x_i,x_j)$ is equivalent in distribution to $k_{\sigma(i)\sigma(j)}(x_{\sigma(i)},x_{\sigma(j)})$ for any finite permutation $\sigma$ of $\mathbb{N}$) that is almost surely positive-semidefinite. 
\end{definition}
Observe that, for any positive-definite kernel $k$, 
$(k)_{i,j=1}^\infty$ is a Gram-de Finetti array.
Like Gram-de Finetti matrices, such arrays possess  convenient representations as functions over uniformly distributed random variables. The following Lemma \ref{lem:GramDeFinetti} is established without difficulty by following the same arguments seen in proofs of the Dovbysh-Sudakov representation theorem (see \cite[Lemma 1]{panchenko2010dovbysh}).
\begin{lemma}
\label{lem:GramDeFinetti}
Let $(k_\pi^{ij})_{i,j=1}^{\infty}$ be a Gram-de Finetti array of kernels. There exists a function $K_\pi$ and independent random variables $\xi$, $(\xi_i)_{i=1}^{\infty}$ uniformly distributed in $[0,1]$ such that
\[
k_\pi^{ij}(x,y) = K_\pi(x,y,\xi,\xi_i,\xi_j).
\]
\end{lemma}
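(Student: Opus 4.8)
The plan is to imitate the classical proof of the Dovbysh--Sudakov representation theorem, as presented in \cite{panchenko2010dovbysh}, but with the fixed sequence of points $\{x_i\}$ carried along as ``parameters'' throughout. The key structural observation is that a Gram-de Finetti array of kernels is, after evaluating at any countable dense family of base points, a genuine Gram-de Finetti matrix, so the Aldous--Hoover--Kallenberg theory of exchangeable arrays applies.

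First I would fix a countable dense subset $\{z_m\}_{m=1}^{\infty}$ of the Polish space $S$ (this exists since $S$ is separable), and consider the doubly-indexed collection $a_{ij}^{(m,\ell)} \coloneqq k_\pi^{ij}(z_m, z_\ell)$. For each fixed pair $(m,\ell)$ the array $(a_{ij}^{(m,\ell)})_{i,j}$ is jointly exchangeable in $(i,j)$, and moreover the \emph{whole} collection $\{(a_{ij}^{(m,\ell)})_{i,j} : m,\ell \geq 1\}$ is jointly exchangeable under a single permutation $\sigma$ acting simultaneously on all $(m,\ell)$-layers, because the defining property of the Gram-de Finetti array says $k_\pi^{ij}(x_i,x_j) \eqd k_\pi^{\sigma(i)\sigma(j)}(x_{\sigma(i)},x_{\sigma(j)})$ for \emph{every} sequence, in particular for the sequence running over $\{z_m\}$ with repetitions. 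By the Aldous--Hoover representation for jointly exchangeable symmetric arrays \cite[Theorem 7.22]{kallenberg2006foundations}, there is a measurable function $G$ and i.i.d.\ uniform $[0,1]$ random variables $\xi$, $(\xi_i)_i$, $(\xi_{ij})_{i\le j}$ (with $\xi_{ij}=\xi_{ji}$) such that $k_\pi^{ij}(z_m,z_\ell) = G(m,\ell,\xi,\xi_i,\xi_j,\xi_{ij})$ simultaneously for all $i,j,m,\ell$.

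Next I would use almost-sure positive-semidefiniteness to eliminate the ``edge noise'' $\xi_{ij}$, exactly as in the Dovbysh--Sudakov argument. For each finite $N$ and each choice of coefficients, the matrix $(k_\pi^{ij}(z_m,z_\ell))$ evaluated on a finite index set is p.s.d.\ almost surely; combined with joint exchangeability, a martingale / de Finetti--type conditioning argument (take conditional expectations given the $\sigma$-field generated by $\xi$ and the $(\xi_i)$, and show the conditional and unconditional Gram structures coincide a.s.) shows the $\xi_{ij}$ cannot appear, i.e.\ the representation reduces to $k_\pi^{ij}(z_m,z_\ell) = \tilde G(m,\ell,\xi,\xi_i,\xi_j)$. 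Passing from the countable dense set back to all of $S$ is then routine: one defines $K_\pi(x,y,u,v,w)$ by continuity/measurability in $(x,y)$, using that each realization $k_\pi^{ij}(\cdot,\cdot)$ is a kernel (hence has enough regularity to be determined by its values on $\{z_m\}$, or, absent continuity assumptions, one simply keeps track of the full measurable dependence), yielding $k_\pi^{ij}(x,y) = K_\pi(x,y,\xi,\xi_i,\xi_j)$.

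The main obstacle I anticipate is the step that removes the pairwise randomness $\xi_{ij}$ while respecting the fact that we need a \emph{single} function $K_\pi$ valid for \emph{all} base points $x,y$ simultaneously, rather than a separate representation for each finite sub-collection of base points. In the scalar Dovbysh--Sudakov theorem this is handled by the p.s.d.\ constraint forcing the ``diagonal'' structure; here one must check that the same constraint, now holding for every finite set of evaluation points $\{z_{m_1},\dots,z_{m_r}\}$ with a common permutation, still forces the $\xi_{ij}$ out — essentially because a jointly exchangeable p.s.d.\ array whose entries are conditionally (given $\xi,(\xi_i)$) uncorrelated off-diagonal perturbations of a rank-structured array must have those perturbations vanish. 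This is exactly the content of \cite[Lemma 1]{panchenko2010dovbysh}, and the claim in the statement is precisely that the argument goes through verbatim with base points as inert parameters; so the proof amounts to verifying that no step of that lemma used anything beyond joint exchangeability and a.s.\ positive-semidefiniteness, both of which are built into Definition \ref{def:GramDeFinetti}.
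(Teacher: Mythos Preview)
Your proposal is correct and follows essentially the same approach as the paper, which does not give a detailed proof but simply points to \cite[Lemma 1]{panchenko2010dovbysh} and asserts the argument carries over ``without difficulty''. Your sketch --- Aldous--Hoover applied to the jointly exchangeable array of evaluations at a countable dense set, followed by the positive-semidefiniteness argument to eliminate the pair-level noise $\xi_{ij}$ --- is precisely the Dovbysh--Sudakov strategy the paper invokes, and your identification of the main technical point (obtaining a \emph{single} representing function valid for all base points simultaneously) is apt and more explicit than anything the paper provides.
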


\begin{example}[Subsampled Stein Kernels]
Consider the setting where a density $p$ decomposes as
\[
p(x) = \prod_{i = 1}^n p_i(x),\qquad x\in S,
\]
Here, each $p_i$ may correspond to a single data point, in which case, $n$ denotes
the total number of data points used to construct the density $p$. Rather than
computing $p$ directly, it is popular to utilise \emph{subsampling}, where
$\log p$ is estimated by
\[
\log {p}^{\mathscr{S}}(x) = \frac{n}{|\mathscr{S}|}\sum_{i \in \mathscr{S}} \log p_i(x),\qquad x\in S,
\]
where $\mathscr{S} \subseteq \{1,\dots,n\}$. Since $\log {p}^{\mathscr{S}}$ often involves far fewer terms than $\log p$ itself, it can be significantly faster to compute. Such estimates are common in subsampled MCMC, based upon the pseudomarginal approach  \cite{quiroz2019speeding}. One of the biggest challenges with these methods is that ${p}^{\mathscr{S}}$ is not an unbiased estimator of $p$, which the pseudomarginal approach requires. However, $\nabla \log {p}^{\mathscr{S}}$ \emph{is} an unbiased estimator of $\nabla \log p$. Consequently, the \emph{subsampled canonical Stein kernel}:
\begin{multline}
\label{eq:CanonSteinSubsample}
k_{\pi}^{ij}(x,y) = \nabla_x \cdot \nabla_y k(x,y) + \nabla \log {p}^{\mathscr{S}_i}(x) \cdot
\nabla_y k(x,y) \\ +
\nabla \log {p}^{\mathscr{S}_j}(y) \cdot \nabla_x k(x,y) + k(x,y) \nabla \log {p}^{\mathscr{S}_i}(x)
\cdot \nabla \log {p}^{\mathscr{S}_j}(y),
\end{multline}
is an unbiased estimator of the canonical Stein kernel (\ref{eq:CanonStein}) when $i\neq j$, provided that each $\mathscr{S}_i \subset \{1,\dots,n\}$ contains elements that are drawn independently with replacement, uniformly at random. Furthermore, for a sequence of such subsets $\mathscr{S}_1,\mathscr{S}_2,\ldots \subset \{1,\dots,n\}$, the array of subsampled Stein kernels $(k_{ij})_{i,j=1}^{\infty}$ is Gram-de Finetti.
\end{example}

Our proposed procedure for utilizing Stein kernel estimators in Stein importance sampling is presented in Algorithm \ref{alg:SteinIS}.

\begin{algorithm}[Stein Importance Sampling]
\label{alg:SteinIS}
Given a collection of samples $\{X_i\}_{i=1}^n$, test function $\phi$, and Gram-de Finetti array of kernels $(k_\pi^{ij})_{i,j=1}^{\infty}$ that estimate (unbiasedly) a reproducing Stein kernel $k_{\pi}$ when $i \neq j$, execute the following steps:
\begin{enumerate}
    \item Compute the Gram matrix $\m K_{\pi} = (k_{\pi}^{ij}(X_i, X_j))_{i,j=1}^n$.
\item Solve (\ref{eq:BBISQuadProg}) to obtain the importance
sampling weights $\v w$.
\item Output $\hat{\pi}^n(\phi)= \sum_{i=1}^n w_i \phi(X_i)$ as an estimator of $\pi(\phi)$.
\end{enumerate}
\end{algorithm}

The use of Stein kernel estimators, or \emph{random} Stein kernels, was recently considered for developing goodness-of-fit tests for latent variable models in \cite{kanagawa2019kernel}. These choices of Stein kernel estimators for latent variable models can be applied with Stein importance sampling via Algorithm \ref{alg:SteinIS}. 

\subsection{The Stein correction}

We now study the theoretical properties of Stein importance sampling as a post-hoc correction for MCMC output. In constrast to the Metropolis correction, we term this procedure the \emph{Stein correction}. The primary evidence for the use of the Stein correction is the consistency guarantee provided in Theorem \ref{thm:SteinCorrection}, a Markov chain analogue of \cite[Theorem 3.2]{Liu2017} with the further extension to arbitrary Gram-de Finetti array of kernels. 
Before stating the theorem, we recall the definition of $V$-uniform ergodicity \cite[\S16]{meyn2012markov}.
A $\nu$-ergodic Markov chain with Markov transition operator $\mathcal{P}$ on $\mathcal{S}$ is $V$-uniformly ergodic for a measurable function $V:\mathbb{R}^d\to [1,\infty)$ if
\[
\sup_{x \in \mathcal{S}} \sup_{|\phi|\leq V}\frac{|\mathcal{P}^n \phi(x) - \pi(\phi)|}{V(x)} \to 0,\qquad \mbox{as } n\to\infty.
\]
For a $V$-uniformly ergodic Markov chain, $V$ is often referred to as a \emph{Lyapunov drift function}, which provides an upper bound on the rate of decay of test functions $\phi$ for which the convergence of $\mathcal{P}^n \phi$ is uniform.
In the sequel, $\hat{\pi}^n$ denotes a weighted empirical distribution derived from Stein importance sampling (Algorithm \ref{alg:SteinIS}) applied with the choice of $\{X_i\}_{i=1}^n$, $\pi$, and $k_\pi$ clear from context. 

\begin{theorem}[Consistency of Stein Correction]
\label{thm:SteinCorrection}
Let $\{X_t\}_{t=1}^{\infty}$ be a $\nu$-ergodic Markov chain on $S$ that is $V$-uniformly ergodic
for some Lyapunov drift function $V\;:\; S \to [1,\infty)$.
Let $k_{\pi}$ be a reproducing Stein kernel with respect to some target distribution $\pi$ on $S$ which is absolutely continuous with respect to $\nu$, and inducing a kernelized Stein discrepancy $\mathbb{S}$. Furthermore, let $(k^{ij}_\pi)_{i,j=1}^{\infty}$ be a Gram-de Finetti array of kernels independent of $\{X_t\}_{t=1}^{\infty}$ such that $\mathbb{E} k^{ij}_\pi = k_\pi$ for $i\neq j$. %
\begin{enumerate}[label=(\alph*)]
\item \label{enu:Stein1} If there exists $0 < r < 1$ such that 
\begin{equation}
\label{eq:enuStein1}
\mathbb{E}\sup_{x \in S} \frac{k^{ii}_{\pi}(x, x)^2}{V(x)^{r}} < +\infty,
\end{equation}
then
\[
\mathbb{S}(\hat{\pi}^n \Vert \pi) \cp 0 \qquad \mbox{as } n\to \infty.
\]
\item \label{enu:Stein2} If there exists $0 < r < 1$ such that
\begin{equation}
\label{eq:enuStein2}
\mathbb{E}\sup_{x \in S} \frac{\partial \pi}{\partial \nu}(x)^4\frac{k_{\pi}^{ii}(x, x)^2}{V(x)^{r}} < +\infty,
\end{equation}
then
\[
\mathbb{S}(\hat{\pi}^n \Vert \pi) = \mathcal{O}_{\mathbb{P}}(n^{-1/2}),\qquad \mbox{as } n\to \infty.
\]
\end{enumerate}
In either case, if $S$ is locally compact and $k_\pi$ is separating and satisfies Assumption \ref{ass:KernelExplode}, then $\hat{\pi}^n \cd \pi$ as $n \to \infty$.
\end{theorem}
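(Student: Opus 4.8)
The plan is to exploit the optimality of the Stein weights. Since $\v w$ minimizes $\v z^\top \m K_\pi \v z$ over the probability simplex for the \emph{estimated} Gram matrix $\m K_\pi = (k_\pi^{ij}(X_i,X_j))_{i,j=1}^n$ (which is a.s.\ positive-semidefinite by Definition \ref{def:GramDeFinetti}, so the program is well posed), and since $\pi\ll\nu$ makes the self-normalized importance-sampling weights $u_i\propto(\partial\pi/\partial\nu)(X_i)$ a feasible point for $n$ large, writing $\m G = (k_\pi(X_i,X_j))_{i,j=1}^n$ for the \emph{true} Gram matrix gives
\[
\mathbb{S}(\hat{\pi}^n\Vert\pi)^2 = \v w^\top\m G\v w \;\le\; \v u^\top\m G\v u \;+\; \v w^\top(\m G-\m K_\pi)\v w \;+\; \v u^\top(\m K_\pi-\m G)\v u,
\]
so it suffices to control the classical importance-sampling quadratic form $\v u^\top\m G\v u$ together with the two fluctuation terms measuring the gap between the estimated and true kernels.

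For the leading term I would use the reproducing property: $\v u^\top\m G\v u = \bar W_n^{-2}\,\bigl\Vert n^{-1}\sum_{i=1}^n \psi_i\,k_\pi(X_i,\cdot)\bigr\Vert_{H_\pi}^2$ with $\psi_i=(\partial\pi/\partial\nu)(X_i)$ and $\bar W_n = n^{-1}\sum_i\psi_i$. The $H_\pi$-valued summands have $\nu$-average $\int k_\pi(x,\cdot)\,\pi(\dd x)$, which vanishes because $\mathbb{S}(\pi\Vert\pi)=0$ (the Stein operator annihilates $H$ in $\pi$-expectation). Since $\{X_t\}$ is $V$-uniformly, hence geometrically, ergodic, the law of large numbers for such chains applied to these summands — whose integrability follows from \eqref{eq:enuStein1} via the estimate $k_\pi(x,x)^2\le\mathbb{E}[k_\pi^{ii}(x,x)^2]\le C\,V(x)^r$, $r<1$ (positive-semidefiniteness, Cauchy--Schwarz and Jensen) — yields $n^{-1}\sum_i\psi_i k_\pi(X_i,\cdot)\to 0$ in $H_\pi$ and $\bar W_n\to 1$, whence $\v u^\top\m G\v u\cp 0$. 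Under the stronger condition \eqref{eq:enuStein2}, the associated Markov-chain central limit theorem (summing autocovariances using the geometric mixing) upgrades this to $\v u^\top\m G\v u=\mathcal{O}_{\mathbb{P}}(n^{-1})$, the extra fourth power of $\partial\pi/\partial\nu$ absorbing the self-normalization and the second-moment bookkeeping.

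For each fluctuation term I would split $\m K_\pi-\m G$ into its off-diagonal part — whose entries $k_\pi^{ij}(X_i,X_j)-k_\pi(X_i,X_j)$ have conditional mean zero given $\{X_t\}$, by independence of the array from the chain and unbiasedness — and its diagonal part $\mathrm{diag}\bigl(k_\pi^{ii}(X_i,X_i)-k_\pi(X_i,X_i)\bigr)$. For $\v z=\v u$ the diagonal part is $\mathcal{O}_{\mathbb{P}}(n^{-1})$ and the off-diagonal part is handled by a second-moment computation after conditioning on $\{X_t\}$ and on the mixing variable $\xi$ of the Dovbysh--Sudakov representation in Lemma \ref{lem:GramDeFinetti}, after which the residual randomness is row-exchangeable and centred. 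For $\v z=\v w$ one cannot take expectations, so I would use the smallness of $\v w^\top\m K_\pi\v w$ (at most $\v u^\top\m K_\pi\v u$) to constrain $\v w$, together with $\sum_i w_i^2\le 1$ and the moment control on $\sup_x k_\pi^{ii}(x,x)/V(x)^{r/2}$ for the diagonal contribution, and the positive-semidefiniteness and joint exchangeability of the array for the off-diagonal contribution. Combining, $\mathbb{S}(\hat{\pi}^n\Vert\pi)\cp 0$ in either case; weak convergence $\hat{\pi}^n\cd\pi$ then follows by applying Proposition \ref{prop:SepImpliesConv}\ref{enu:SepConv} to the random probability measures $\hat{\pi}^n$ on the locally compact space $S$, using that $k_\pi$ is separating and satisfies Assumption \ref{ass:KernelExplode}.

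I expect the main obstacle to be the fluctuation term involving $\v w$: because the optimal weights are a non-explicit function of the random kernel array, neither a direct expectation nor a crude supremum over the whole simplex — which scales like the operator norm of the noise matrix and is far too large — is available, and one must instead leverage the smallness of $\v w^\top\m K_\pi\v w$ together with the structural constraints (positive-semidefiniteness, joint exchangeability, and the representation of Lemma \ref{lem:GramDeFinetti}) on the array. Making this step quantitative under the moment hypotheses \eqref{eq:enuStein1} and \eqref{eq:enuStein2} is the crux of the argument.
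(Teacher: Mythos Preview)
Your opening move—compare the optimal weights against self-normalised importance-sampling reference weights via optimality—matches the paper, but the execution diverges. For the leading term $\v u^\top\m G\v u$ the paper does not use a Hilbert-space LLN/CLT; instead it rewrites the reference quadratic form as a degenerate $U$-statistic of the \emph{augmented} chain $\tilde X_t=(X_t,\xi_t)$ (with $\xi_t$ from the Dovbysh--Sudakov representation of Lemma~\ref{lem:GramDeFinetti}) and invokes the variance inequality of Fort--Moulines--Priouret--Vandekerkhove for $U$-statistics of $V$-uniformly ergodic chains. This single tool absorbs both the Markov dependence and the kernel randomness, and delivers the $\mathcal O(n^{-1})$ rate for $\mathbb S^2$ in (b) directly, without any separate fluctuation analysis at the reference weights. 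For part~(a) the paper also uses a device your sketch misses: since \eqref{eq:enuStein1} gives no integrability for $w=\partial\pi/\partial\nu$, the reference weights are \emph{truncated}, $v_i\propto w(X_i)\wedge\tau_n$ with $\tau_n=o(n^{1/2})$ and $\tau_n\to\infty$; the $U$-statistic bound is applied at level $\tau_n$ and the truncation bias removed by dominated convergence. Your $H_\pi$-valued ergodic-theorem route would need $w(\cdot)\sqrt{k_\pi(\cdot,\cdot)}\in L^1(\nu)$, which does not follow from \eqref{eq:enuStein1} alone, so some truncation of this kind is unavoidable.

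On your ``main obstacle'', the term $\v w^\top(\m G-\m K_\pi)\v w$: the paper's write-up states the reduction as ``$\v w^{(n)}$ minimises $\mathbb S_n(\v w,\v x)=\sum_{i,j}w_iw_j k_\pi(x_i,x_j)$'' with the \emph{true} $k_\pi$. That is literally correct only in the deterministic case $k_\pi^{ij}\equiv k_\pi$; in the genuinely random Gram--de Finetti setting the optimal weights minimise the \emph{estimated} form, and the passage from small $\v w^\top\m K_\pi\v w$ to small $\v w^\top\m G\v w$ is not explicitly treated in the paper either. So in the deterministic case (the principal application) your fluctuation terms vanish and both arguments are complete; in the random case your diagnosis that this step is the crux is accurate, and the augmented-chain device does not by itself bypass it.
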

\begin{proof}
By Lemma \ref{lem:GramDeFinetti}, there exists a function $K_\pi$ and independent uniformly distributed random variables $\xi$, $(\xi_i)_{i=1}^{\infty}$ such that $k_\pi^{ij}(x,y) = K_\pi(x,y,\xi,\xi_i,\xi_j)$ for all $x,y \in S$.
To prove the desired convergence, our primary tool is the variance bound of
\cite{fort2012simple} for $U$-statistics of non-stationary Markov chains. 
Since for each $n$, $\v w^{(n)}$ minimizes $\mathbb{S}_n(\v w, \v x) \coloneqq \sum_{i,j=1}^n w_i w_j k_{\pi}(x_i, x_j)$ over $\v w$, it suffices to find a sequence of normalized reference weights $\v v^{(n)}$ 
such that $\mathbb{S}_n(\v v^{(n)}, \v x) \cp 0$ in case \ref{enu:Stein1} and
$\mathbb{S}_n(\v v^{(n)}, \v x) = \mathcal{O}_{\mathbb{P}}(n^{-1})$ in case
\ref{enu:Stein2}. The final statement follows as a consequence of Proposition \ref{prop:SepImpliesConv}. These weights are to be constructed in reference to the usual importance sampling weights (\ref{eq:ImportanceSampling}) obtained via the weight function $w = \partial \pi / \partial \nu$. Furthermore, we let $\mathbb{E}_{\v X}$ denote the conditional expectation operator conditioned on the entire Markov chain $\{X_t\}_{t=1}^{\infty}$, so that
$\mathbb{E}_{\v X}k_\pi^{ij}(X_i, X_j) = k_\pi(X_i, X_j)$ for any $i,j$.

Beginning with case \ref{enu:Stein2} first, by (\ref{eq:enuStein1}), there exists a square integrable function $F$ and a constant $C$ such that
\[
\frac{\partial \pi}{\partial \nu}(x)^4 k_{\pi}^{ij}(x,x)^2 \leq C V(x)^r [1 + F(\xi, \xi_i, \xi_j)]^2
\]
for each $i,j$ and $x \in S$. Positive-definiteness ensures that 
\[
k_\pi^{ij}(x_i, x_j) \leq \tfrac12[k_\pi^{ii}(x_i, x_i) + k_\pi^{jj}(x_j, x_j)],\quad i,j=1,2,\dots,
\]
and so we may take $F$ to satisfy
\begin{equation}
\label{eq:CauchyIneqF}
F(\xi, x, y) \leq \tfrac12 [F(\xi, x, x) + F(\xi, y, y)],\qquad \mbox{for any }x,y\in S.
\end{equation}
Consider the augmented Markov chain $\tilde{X}_t = (X_t, \xi_t)$ for $t = 0,1,2,\dots$ Under the hypotheses, since $\xi_i$ are independent, $\tilde{X}_t$ is $\tilde{V}$-uniformly ergodic where
\begin{equation}
\label{eq:NewLyapunov}
\tilde{V}(x,u) = V(x)[1 + F(\xi, u, u)]^{1/r}.
\end{equation}
Letting $\varphi((x,u),(y,v)) = w(x) w(y) K_\pi(x,y,\xi,u,v)$, since each $k^{ij}_\pi$ is an unbiased estimator of a Stein kernel, we see that 
\[
\int_{S \times S} \int_{[0,1]^2} \varphi((x,u),(y,v)) \ \dd u \dd v \ \pi(\dd x) \pi(\dd y) = 0,
\]
that is, $\varphi$ is a degenerate kernel. Furthermore, from (\ref{eq:CauchyIneqF}),
\begin{align*}
C_b &\coloneqq \sup_{x,y\in S}\sup_{u,v \in [0,1]} \frac{\partial \pi}{\partial \nu}(x) \frac{\partial \pi}{\partial \nu}(y) \frac{|K_\pi(x,y,\xi,u,v)|}{\tilde{V}(x,u)^r + \tilde{V}(y,v)^r} \\
&\leq \sup_{x \in S} \sup_{u \in [0,1]} \frac{\partial \pi}{\partial \nu}(x)^2 \frac{|K_\pi(x,x,\xi,u,u)|}{\tilde{V}(x,u)^r}
< +\infty,
\end{align*}
almost surely. Let $\v v^{(n)}$ denote the normalized reference weights $v_i^{(n)} \propto w(X_i)$. Observe that
\begin{align}
\mathbb{S}_n(\v v^{(n)}, \v X) &= \frac{n^{-2} \sum_{i,j=1}^n \mathbb{E}_{\v X}\varphi((X_i,\xi_i), (X_j,\xi_j))}{n^{-2}\sum_{i,j=1}^n w(X_i)w(X_j)} \nonumber \\ &= W_n^{-2} [(1-n^{-1}) \mathbb{E}_{\v X}U_n + n^{-1} \mathbb{E}_{\v X} V_n] \label{eq:SteinNDecomp}
\end{align}
where $U_n$ is the degenerate $U$-statistic
\[
U_n \coloneqq \frac{2}{n(n-1)} \sum_{1\leq i < j \leq n} \varphi((X_i,\xi_i), (X_j,\xi_j)),
\]
and
\[
V_n \coloneqq \frac1n \sum_{i=1}^n \varphi((X_i,\xi_i), (X_i,\xi_i)),\qquad W_n \coloneqq \frac1n\sum_{i=1}^n w(X_i).
\]
Since, by assumption,
$|\varphi((x,u), (x,u))|^2 \leq C \tilde{V}(x,u)$ for some constant $C > 0$, it follows
from \cite[Theorem 17.01]{meyn2012markov} that $\|V_n\|_2 = \mathcal{O}(n^{-1/2})$.
Furthermore, by applying \cite[Corollary 2.3]{fort2012simple}, $\|U_n\|_2 = \mathcal{O}(n^{-1})$. Finally, since $w \in L^1(\nu)$, \cite[Theorem 17.01]{meyn2012markov} implies that $W_n \cas 1$ as $n \to \infty$. Therefore, $\mathbb{S}_n(\v v^{(n)}, \v X) = \mathcal{O}_{\mathbb{P}}(n^{-1})$ as required. 

For case \ref{enu:Stein1}, the results of \cite{fort2012simple} can no longer
be applied directly. Instead, consider the truncated reference weights
$v_i^{(n)} \propto w(X_i) \wedge \tau_n$, where $\tau_n = o(n^{1/2})$ and $\tau_n \to \infty$ as $n\to\infty$. As before, by hypothesis, there exists a square integrable function $F$ and a constant $C$ such that
\[
k_{\pi}^{ij}(x,x)^2 \leq C V(x)^r [1 + F(\xi, \xi_i, \xi_j)]^2
\]
for each $i,j$ and $x \in S$, where, once again, $F$ can be taken to satisfy (\ref{eq:CauchyIneqF}). The augmented chain $\tilde{X}_t$ is $\tilde{V}$-uniformly ergodic
with respect to $\tilde{V}$ defined by (\ref{eq:NewLyapunov}) with the new choice of $F$, and now
\[
C_a \coloneqq \sup_{x,y\in S}\sup_{u,v \in [0,1]} \frac{|K_\pi(x,y,\xi,u,v)|}{\tilde{V}(x,u)^r + \tilde{V}(y,v)^r} < +\infty,
\]
almost surely. Defining
\begin{equation}
\label{eq:SteinCorPhi}
\varphi_n((x,u), (y,v)) = [w(x) \wedge \tau_n][w(y) \wedge \tau_n]K_{\pi}(x, y, \xi, u, v),
\end{equation}
proceeding as before, there is the same decomposition (\ref{eq:SteinNDecomp}) in
terms of $U_n$, $V_n$, and $W_n$, but now with $\varphi$ replaced by $\varphi_n$
and $W_n = n^{-1} \sum_{i=1}^n w(X_i) \wedge \tau_n$. Since $\varphi_n
\leq \tau_n \varphi_1$ and $(x,u) \mapsto \varphi_1((x,u), (x,u))$ is integrable
with respect to the product of $\nu$ and the Lebesgue measure on $[0,1]$, 
the ergodic theorem implies $\tau_n^{-1} \|V_n\|_2$ is bounded, and $n^{-1} \|V_n\|_2 \to 0$. Unlike part \ref{enu:Stein2}, $\varphi_n$ is no longer degenerate,
however, by applying \cite[Corollary 2.3]{fort2012simple} to the Hoeffding
decomposition of $U_n$, and recognising that
\[
\sup_{x, y \in S} \frac{|\varphi_n((x, u), (y, v))|}{\tilde{V}(x,u)^r + \tilde{V}(y,v)^r} \leq \tau_n C_a,
\]
it follows that there is a constant $K > 0$ depending only on $V$, the geometric ergodicity of the chain $X_t$ and the initial starting distribution (of $X_0$), such that, for $\tilde{\varphi}_n(x,y) = \int_0^1 \int_0^1 \varphi_n((x,u),(y,v)) \dd u \dd v$ and any $n=2,3,\dots,$
\begin{equation}
\label{eq:SteinUnDecay}
\mathbb{E}\left[U_n - \int \tilde{\varphi}_n(x, y) \nu(\dd x)\nu(\dd y)\right]^2
\leq \frac{K C_1^2 \tau_n^2}{n} \to 0.
\end{equation}
Denoting $E_n = \{x: \partial\pi / \partial \nu (x) \geq \tau_n\}$,
\[
\int |\varphi(x, y) - \tilde{\varphi}_n(x, y)|\nu(\dd x)\nu(\dd y)
\leq \int_{E_n\times E_n} |k_{\pi}(x, y)| \pi(\dd x)\pi(\dd y),
\]
which implies that $\int \varphi_n(x, y) \nu(\dd x) \nu(\dd y) \to 0$ by
the dominated convergence theorem, which, together with (\ref{eq:SteinUnDecay}), implies $\|U_n\|_2 \to 0$. 
Finally, concerning $W_n$, by the ergodic theorem, for any fixed $m \in \mathbb{N}$, with probability one,
\begin{multline*}
1 = \int w(x)\nu(\dd x) \geq \limsup_{n\to\infty} \frac1n \sum_{i=1}^n w(X_i) \wedge \tau_n \\\geq \liminf_{n\to\infty} \frac1n \sum_{i=1}^n w(X_i) \wedge \tau_n
\geq \int w(x)\wedge \tau_m \nu(\dd x) \to 1,
\end{multline*}
and so $W_n \cas 1$. It follows immediately that $\mathbb{S}_n(\v v^{(n)}, \v x) \cp 0$.
\end{proof}

As a simple yet useful corollary, one can also look at the case where $X_1,X_2,\dots$ are independent and identically distributed random elements on $S$, in which case, $V$ can be taken to be any arbitrary integrable function, and so Theorem \ref{thm:SteinCorrection}\ref{enu:Stein1} follows if $x\mapsto k_\pi(x,x) \in L^2(\nu)$. It is possible to weaken this assumption by using tighter estimates for the iid case, resulting in a generalization of \cite[Theorem 3.2]{Liu2017} presented in Corollary \ref{cor:IIDStein}. Analogous results can also be obtained for other types of stationary sequences using similar arguments; we refer to \cite{lee1990u} for the enabling estimates in these cases. 
\begin{corollary}
\label{cor:IIDStein}
In the setting of Theorem \ref{thm:SteinCorrection}, let $X_1,X_2,\dots$ be 
a sequence of independent random variables on $S$ with common distribution $\nu$, such that $\pi$ is absolutely continuous with respect to $\nu$.
\begin{enumerate}[label=(\alph*)]
\item \label{enu:SteinIndep1}
If $x \mapsto k_\pi(x,x) \in L^1(\nu)$, then $\mathbb{S}(\hat{\pi}^n \Vert \pi) \cp 0$ as $n \to \infty$.
\item \label{enu:SteinIndep2}
Letting $w = \partial \pi / \partial \nu$, if $x \mapsto w(x)^2 k_\pi(x,x) \in L^1(\nu)$ and \linebreak $(x,y) \mapsto w(x)w(y) k_\pi(x,y) \in L^2(\nu \times \nu)$, then $\mathbb{S}(\hat{\pi}^n \Vert \pi) = \mathcal{O}_{\mathbb{P}}(n^{-1/2})$ as $n \to \infty$.
\end{enumerate}
\end{corollary}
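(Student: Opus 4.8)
The plan is to follow the template of the proof of Theorem~\ref{thm:SteinCorrection}. Since the Stein weights $\v w^{(n)}$ minimise $\mathbb{S}_n(\v w,\v X)\coloneqq\sum_{i,j=1}^n w_i w_j k_\pi(X_i,X_j)$ over the probability simplex, in each case it is enough to exhibit feasible reference weights $\v v^{(n)}$ for which $\mathbb{S}_n(\v v^{(n)},\v X)$ has the advertised decay rate, and the closing statement $\hat\pi^n\cd\pi$ is then inherited verbatim via Proposition~\ref{prop:SepImpliesConv}. The only substantive change from Theorem~\ref{thm:SteinCorrection} is that, for i.i.d.\ samples, the $U$-statistic variance bounds of \cite{fort2012simple} for non-stationary Markov chains may be replaced by the classical estimates coming from the Hoeffding decomposition (see \cite{lee1990u}); this removes the Lyapunov function and is what allows the weaker hypotheses. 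In both parts the reference weights are built from (a truncation of) the ordinary importance weights $w=\partial\pi/\partial\nu$.

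For part~\ref{enu:SteinIndep2}, take $v_i^{(n)}\propto w(X_i)$, so that $\mathbb{S}_n(\v v^{(n)},\v X)=W_n^{-2}[\tfrac{n-1}{n}U_n+\tfrac1{n^2}\sum_i w(X_i)^2 k_\pi(X_i,X_i)]$, where $W_n=\tfrac1n\sum_i w(X_i)$ and $U_n$ is the $U$-statistic with symmetric kernel $h(x,y)=w(x)w(y)k_\pi(x,y)$. The hypothesis $w^2 k_\pi(\cdot,\cdot)\in L^1(\nu)$ gives $\sqrt{k_\pi(\cdot,\cdot)}\in L^1(\pi)$ by Cauchy--Schwarz (as $\pi(\sqrt{k_\pi(\cdot,\cdot)})=\nu(w\sqrt{k_\pi(\cdot,\cdot)})\le\nu(w^2 k_\pi(\cdot,\cdot))^{1/2}$), so $\int k_\pi(x,\cdot)\,\dd\pi(x)$ is a genuine Bochner integral in $H_\pi$ and equals $0$ because $\mathbb{S}(\pi\Vert\pi)=0$; therefore $\mathbb{E}[h(x,X_2)]=w(x)\int k_\pi(x,y)\,\dd\pi(y)=0$, i.e.\ $h$ is a completely degenerate kernel, and the hypothesis $h\in L^2(\nu\times\nu)$ gives $\|U_n\|_2^2=\binom n2^{-1}\|h\|_{L^2(\nu\times\nu)}^2=\mathcal O(n^{-2})$. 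The diagonal term has mean $n^{-1}\nu(w^2 k_\pi(\cdot,\cdot))=\mathcal O(n^{-1})$ and $W_n\cas 1$ by the strong law, so $\mathbb{S}(\hat\pi^n\Vert\pi)^2\le\mathbb{S}_n(\v v^{(n)},\v X)=\mathcal O_{\mathbb P}(n^{-1})$.

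For part~\ref{enu:SteinIndep1} the weight $w$ need not be square-integrable, so I would instead use truncated reference weights $v_i^{(n)}\propto w(X_i)\wedge\tau_n$ with $\tau_n\to\infty$ and $\tau_n=o(n^{1/2})$, giving the analogous decomposition with $U_n$ carrying $h_n(x,y)=(w(x)\wedge\tau_n)(w(y)\wedge\tau_n)k_\pi(x,y)$ and $W_n=\tfrac1n\sum_i (w(X_i)\wedge\tau_n)$. The diagonal term is then dominated by $\tfrac{\tau_n^2}{n}\cdot\tfrac1n\sum_i k_\pi(X_i,X_i)\to 0$ a.s.\ because $k_\pi(\cdot,\cdot)\in L^1(\nu)$, and $W_n\cas 1$ by the monotone-convergence/ergodic-theorem argument from the proof of Theorem~\ref{thm:SteinCorrection}\ref{enu:Stein1}. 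The kernel $h_n$ is no longer degenerate, so I would pass to its Hoeffding decomposition $U_n=\theta_n+\tfrac2n\sum_i\bar h_n^{(1)}(X_i)+R_n$; writing $g_n=\int(w(x)\wedge\tau_n)\,k_\pi(x,\cdot)\,\dd\nu(x)\in H_\pi$, so $\theta_n=\|g_n\|_{H_\pi}^2$, the bound $|k_\pi(x,y)|\le\sqrt{k_\pi(x,x)k_\pi(y,y)}$ and $k_\pi(\cdot,\cdot)\in L^1(\nu)$ give $\mathbb{E}[h_n(X_1,X_2)^2]\le\tau_n^4\nu(k_\pi(\cdot,\cdot))^2$ and $\mathbb{E}[\bar h_n^{(1)}(X_1)^2]\le\theta_n\,\tau_n^2\,\nu(k_\pi(\cdot,\cdot))$, so $R_n=\mathcal O_{\mathbb P}(\tau_n^2 n^{-1})\to0$ and the linear term is $\mathcal O_{\mathbb P}(\theta_n^{1/2}\tau_n n^{-1/2})$, which vanishes once $\theta_n\to0$ is known.

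What remains --- and what I expect to be the main obstacle --- is to show the bias $\theta_n=\|g_n\|_{H_\pi}^2\to 0$. Note $\theta_n=(\nu(w\wedge\tau_n))^2\,\mathbb{S}(\mu_n\Vert\pi)^2$, where $\mu_n$ is the probability measure with $\nu$-density $\propto(w\wedge\tau_n)$, and $\mu_n\to\pi$ in total variation; so the task is to prove $g_n\to 0$ in $H_\pi$, i.e.\ that the kernelized Stein discrepancy of the truncated importance-sampling approximation of $\pi$ vanishes. I would establish this by splitting $g_n$ at a fixed level $M$: on $\{w\le M\}$ the integrand is, for $\tau_n\ge M$, dominated by $M\,\|k_\pi(x,\cdot)\|_{H_\pi}$, which lies in $L^1(\nu)$ since $\|k_\pi(x,\cdot)\|_{H_\pi}^2=k_\pi(x,x)\in L^1(\nu)$, so dominated convergence sends that piece to $\int_{\{w\le M\}}k_\pi(x,\cdot)\,\dd\pi(x)$, while the contribution of $\{w>M\}$ must be controlled uniformly in $n$ and then driven to $0$ by letting $M\to\infty$; making this uniform-integrability step rigorous using only $k_\pi(\cdot,\cdot)\in L^1(\nu)$ and $\pi\ll\nu$ is the delicate point of the whole argument. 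Granting $\theta_n\to 0$, the estimates above give $\mathbb{S}_n(\v v^{(n)},\v X)\cp 0$, hence $\mathbb{S}(\hat\pi^n\Vert\pi)\cp 0$, and the weak-convergence conclusion follows as in Theorem~\ref{thm:SteinCorrection} from Proposition~\ref{prop:SepImpliesConv}.
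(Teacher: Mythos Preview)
Your proposal is correct and takes essentially the same route as the paper: exact importance weights with the degenerate $U$-statistic variance formula for \ref{enu:SteinIndep2}, and truncated importance weights with a Hoeffding decomposition for \ref{enu:SteinIndep1}, in both cases replacing the Markov-chain $U$-statistic estimates of \cite{fort2012simple} by the classical i.i.d.\ ones from \cite{lee1990u}. The paper handles the step you flag as delicate, $\theta_n\to 0$, more directly than your level-$M$ splitting sketch: it simply defers to the dominated-convergence argument at the end of the proof of Theorem~\ref{thm:SteinCorrection}\ref{enu:Stein1}, using $\int k_\pi\,\dd\pi\otimes\dd\pi=0$ to bound $|\theta_n|$ by $\int|\varphi-\varphi_n|\,\dd\nu\otimes\dd\nu$, an integral supported on the shrinking set $\{w>\tau_n\}$.
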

\begin{proof}%
As in the proof of Theorem \ref{thm:SteinCorrection}\ref{enu:Stein1}, define $\varphi_n$ according to (\ref{eq:SteinCorPhi}) and form the same decomposition $U_n, V_n, W_n$. Since $\tau_n^{-1} \|V_n\|_1$ is bounded, $n^{-1} \|V_n\|_1 \to 0$, and by the law of large numbers, $W_n \cas 1$. By \cite[Theorem 1.3.3]{lee1990u}, there exists $C > 0$ such that $\mathbb{E}[U_n - \int \tilde{\varphi}_n(x,y) \nu(\dd x)\nu(\dd y)]^2 \leq C \tau_n^2 n^{-1} \to 0$ as $n \to \infty$. The remaining arguments in the proof of Theorem \ref{thm:SteinCorrection}\ref{enu:Stein1} arrive at \ref{enu:SteinIndep1}.
For \ref{enu:SteinIndep2}, following the same arguments as Theorem \ref{thm:SteinCorrection}\ref{enu:Stein2}, this time, computing the variance of $V_n$ gives $\|V_n\|_2 = \mathcal{O}(n^{-1/2})$, while \cite[Theorem 1.3.3]{lee1990u} implies $\|U_n\|_2 = \mathcal{O}(n^{-1})$. 
\end{proof}

For the canonical Stein kernel on $\mathbb{R}^d$, if $\nabla\log p(x)$ is polynomial, then to satisfy the conditions of Theorem \ref{thm:SteinCorrection}\ref{enu:Stein1}, it suffices to establish $V$-uniform ergodicity for a drift function $V$ that grows as $V(x) \propto e^{s \|x\|}$ for some $s > 0$. In particular, inspired by \cite[Corollary 3.3]{hansen2003geometric}, the following Proposition \ref{prop:GeoErgo} provides a simple sufficient condition for Theorem \ref{thm:SteinCorrection}\ref{enu:Stein1} when the Markov chain $\{ X_k\}_{k=1}^{\infty}$ has Gaussian transition probabilities.
\begin{proposition}
\label{prop:GeoErgo}
Consider a Markov chain $\{X_k\}_{k=1}^{\infty}$ on $\mathbb{R}^d$ satisfying the recursion $X_{k+1} = \mu(X_k) + \sigma(X_k) Z_k$, where each $Z_k$ is an independent standard normal random vector. Letting $\Sigma(x) = \sigma(x) \sigma(x)^\top$, the conditions of Theorem \ref{thm:SteinCorrection}\ref{enu:Stein1} are satisfied if $k_\pi^{ij}(x,x) = \mathcal{O}_{\mathbb{P}}(\|x\|^\alpha)$ for some $\alpha > 0$ and
\begin{equation}
\label{eq:GeoErgoCondition}
\limsup_{\|x\| \to \infty} \frac{\|\mu(x)\|^2 + \mathrm{tr}(\Sigma(x))}{\|x\|^2} < 1.
\end{equation}
\end{proposition}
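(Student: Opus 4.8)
The plan is to produce a Lyapunov function $V$ that at once witnesses $V$-uniform ergodicity of $\{X_k\}$ and validates the moment bound \eqref{eq:enuStein1}, so that Theorem \ref{thm:SteinCorrection}\ref{enu:Stein1} applies. The first step reduces everything to a single geometric drift inequality. Provided $\sigma(x)$ is non-singular, the one-step transition kernel of $\{X_k\}$ has the everywhere-positive continuous density $y\mapsto\mathcal{N}(y;\mu(x),\Sigma(x))$, so the chain is $\psi$-irreducible with respect to Lebesgue measure, strongly aperiodic, and every compact set is small; by the Foster--Lyapunov theory for geometric ergodicity \cite[Chapters 15--16]{meyn2012markov} it then suffices to exhibit $V:\mathbb{R}^d\to[1,\infty)$, a compact set $C$, and constants $\lambda\in(0,1)$, $b<\infty$ with $\mathcal{P}V\leq\lambda V+b\,\ind_C$. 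This simultaneously gives positive Harris recurrence --- hence $\nu$-ergodicity for the unique invariant law $\nu$ --- and $V$-uniform ergodicity with that very $V$.

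For the drift function there are two natural candidates. When $\Sigma(\cdot)$ is bounded --- the case of, e.g., unadjusted Langevin, where $\sigma$ is constant --- I would use an exponential Lyapunov function $V(x)=e^{s\|x\|}$ with $s>0$ small; then $V(x)^{r}=e^{rs\|x\|}$ dominates every polynomial, so $k_\pi^{ii}(x,x)^2/V(x)^{r}=\mathcal{O}_{\mathbb{P}}(1)$ for any $\alpha$ under the growth hypothesis and, since the $k^{ii}_\pi$ are identically distributed, \eqref{eq:enuStein1} follows. When $\Sigma(\cdot)$ may grow I would use $V(x)=(1+\|x\|^2)^{p}$ with $p>0$ and $r\in(0,1)$ chosen so that $pr>\alpha$, for which $k_\pi^{ii}(x,x)^2/V(x)^{r}=\mathcal{O}_{\mathbb{P}}(\|x\|^{2\alpha-2pr})\to0$, so \eqref{eq:enuStein1} again holds. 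Either way the remaining hypotheses of Theorem \ref{thm:SteinCorrection}\ref{enu:Stein1} are inherited, so only the drift inequality is in question.

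The drift inequality is where \eqref{eq:GeoErgoCondition} enters. Writing $Y_x=\mu(x)+\sigma(x)Z$ with $Z$ standard normal, $\mathbb{E}\|Y_x\|^2=\|\mu(x)\|^2+\mathrm{tr}\,\Sigma(x)$, which by \eqref{eq:GeoErgoCondition} is $\leq c'\|x\|^2$ for all sufficiently large $\|x\|$, where $c'$ is any number strictly between $1$ and the limit superior $c<1$ in \eqref{eq:GeoErgoCondition}. For the polynomial choice with $p\leq1$, Jensen's inequality gives $\mathcal{P}V(x)=\mathbb{E}[(1+\|Y_x\|^2)^{p}]\leq(1+c'\|x\|^2)^{p}$, which is $\leq\lambda(1+\|x\|^2)^{p}$ off a large ball $C$ for any $\lambda\in((c')^{p},1)$, while $\mathcal{P}V$ is bounded on $C$ by continuity of $\mu,\sigma$; for $p>1$ one expands $\|Y_x\|^2$ as a Gaussian quadratic form and controls its higher moments directly. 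For the exponential choice one bounds $\mathcal{P}V(x)\leq e^{s\|\mu(x)\|}\,\mathbb{E}[e^{s\|\sigma(x)Z\|}]$, controls the second factor by a constant via the sub-Gaussian concentration of the $\|\sigma(x)\|_{\mathrm{op}}$-Lipschitz map $z\mapsto\|\sigma(x)z\|$, and uses $\|\mu(x)\|\leq\sqrt{c}\,\|x\|+o(\|x\|)$ from \eqref{eq:GeoErgoCondition} to obtain $\mathcal{P}V(x)/V(x)\leq e^{-s(1-\sqrt{c})\|x\|+o(\|x\|)}\to0$.

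I expect the main obstacle to be the drift estimate for $p>1$ with genuinely state-dependent, growing $\Sigma(\cdot)$ --- the only route to the conclusion when $\alpha\geq1$ and $\sigma$ is unbounded --- since Jensen is unavailable and the higher moments of the quadratic form $Z^\top\Sigma(x)Z$ carry combinatorial constants (of order $\mathbb{E}[(\chi_1^2)^{p}]$ along the most degenerate eigendirection of $\Sigma(x)$) that a crude bound cannot absorb into the contraction factor; handling this cleanly seems to require some extra control on the spectral growth of $\Sigma(\cdot)$ beyond \eqref{eq:GeoErgoCondition}. In the two regimes that \eqref{eq:GeoErgoCondition} settles directly --- $\alpha<1$ via Jensen, and bounded $\Sigma$ via the exponential Lyapunov function --- the remaining ingredients ($\psi$-irreducibility, aperiodicity and smallness of compacts for Gaussian-kernel chains, and the passage from the drift inequality to $V$-uniform ergodicity) are standard Meyn--Tweedie theory.
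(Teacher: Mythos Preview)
Your approach matches the paper's. The paper commits exclusively to your exponential candidate $V_s(x)=e^{s\|x\|}$: it cites \cite[Theorem 3.1]{hansen2003geometric} for irreducibility, aperiodicity and smallness of compacts, then bounds $\mathcal{P}V_s(x)/V_s(x)$ by first using Jensen to get $\mathbb{E}\|\mu(x)+\sigma(x)Z\|\leq(\|\mu(x)\|^2+\mathrm{tr}\,\Sigma(x))^{1/2}\leq\sqrt{c}\,\|x\|$ for large $\|x\|$ (with $c<1$ the limit superior in \eqref{eq:GeoErgoCondition}), and then applying the Gaussian Lipschitz concentration of \cite[Theorem 5.5]{boucheron2013concentration} to obtain $\mathcal{P}V_s(x)/V_s(x)\leq\exp\bigl(\tfrac{s^2}{2}+s\,\mathbb{E}\|\mu(x)+\sigma(x)Z\|-s\|x\|\bigr)\to0$, invoking \cite[Theorem 15.0.1 and Lemma 15.2.8]{meyn2012markov} to pass to $V_s$-uniform ergodicity for every $s>0$. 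This is your exponential sketch with the minor sharpening that the paper bounds $\mathbb{E}\|\mu(x)+\sigma(x)Z\|$ directly rather than splitting by the triangle inequality.

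Your case distinction is in fact more careful than the paper. The variance proxy in the concentration step is really $\|\sigma(x)\|_{\mathrm{op}}^2$, so the displayed constant $s^2/2$ is clean for bounded $\sigma$---the regime of all three applications cited immediately after the proposition (ULA, implicit and tamed Langevin have constant $\sigma$)---but when $\Sigma$ grows linearly the exponential drift faces exactly the obstacle you flag. Your polynomial route with $p\leq1$ is a genuine alternative that handles unbounded $\Sigma$ with $\alpha<1$ directly from \eqref{eq:GeoErgoCondition}, and your diagnosis that the corner $\alpha\geq1$ with linearly growing $\Sigma$ requires further spectral control on $\Sigma(\cdot)$ is accurate; the paper does not treat that corner separately.
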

\begin{proof}%
Our proof resembles that of \cite[Proposition 1]{hodgkinson2021implicit}.
Let $\mathcal{P}$ denote the transition operator of $\{X_k\}_{k=1}^{\infty}$, and let $V_s(x) = e^{s\|x\|}$. It will suffice to show that $\{X_k\}_{k=1}^{\infty}$ is $V_s$-uniformly ergodic for any $s > 0$. 
By \cite[Theorem 3.1]{hansen2003geometric}, the Markov kernel $\mathcal{P}$ is irreducible with respect to Lebesgue measure and aperiodic, and has small compact sets. Fixing $s > 0$, by \cite[Theorem 15.0.1]{meyn2012markov} and \cite[Lemma 15.2.8]{meyn2012markov}, it suffices to show that $\mathcal{P}V_s(x) / V_s(x) \to 0$ as $\|x\| \to \infty$. For $Z$ a standard normal random vector, let $\alpha = \limsup_{\|x\|\to \infty} \mathbb{E}\|\mu(x) + \sigma(x) Z\|^2 / \|x\|^2$. By assumption, $\alpha < 1$, and note that by Jensen's inequality,
\begin{equation}
\label{eq:GeoErgoNormalMain}
\mathbb{E}\|\mu(x) + \sigma(x) Z\| - \|x\| \leq (\sqrt{\alpha} - 1)\|x\| \to -\infty,\quad\mbox{as }\|x\|\to\infty.
\end{equation}
Finally, \cite[Theorem 5.5]{boucheron2013concentration} implies
\[
\frac{\mathcal{P}V_s(x)}{V_s(x)} \leq \exp\left(\frac{s^2}{2} + s\mathbb{E}\|\mu(x) + \sigma(x) Z\| - s \|x\|\right),
\]
and so $\mathcal{P}V_s(x) / V_s(x) \to 0$ as $\|x\| \to \infty$ by (\ref{eq:GeoErgoNormalMain}). 
\end{proof}
Fortunately, (\ref{eq:GeoErgoCondition}) is satisfied for many algorithms generating approximate samples from distributions on $\mathbb{R}^d$, including the unadjusted Langevin algorithm \cite{roberts1996}, the (exact) implicit Langevin algorithm \cite{hodgkinson2021implicit}, and the tamed unadjusted Langevin algorithm \cite{brosse2018tamed}. In situations where a particular Markov chain of interest can itself only be approximately simulated (for example, the inexact implicit Langevin algorithm \cite{hodgkinson2021implicit}), in Corollary \ref{cor:Robust}, we appeal to the robustness of $V$-uniform ergodicity to perturbations in the total variation metric. Corollary \ref{cor:Robust} follows immediately from Theorem 1 and \cite[Theorem 1]{ferre2013regular}.

\begin{corollary}
\label{cor:Robust}
Let $X_1,X_2,\dots$ be a $V$-uniformly ergodic Markov chain on $S$
for some Lyapunov drift function $V: S \to[1,\infty)$, with corresponding
transition operator $P$. Consider a family of Markov chains $\{X_t^{\epsilon}\}_{\epsilon > 0}$ with a corresponding family of transition operators $\{P_{\epsilon}\}_{\epsilon > 0}$ such that $P_{\epsilon} \to P$ in total variation as $\epsilon \to 0^{+}$. If $V$ satisfies the hypotheses of Theorem \ref{thm:SteinCorrection}, then there exists some $\epsilon_0 > 0$ such that the results of Theorem \ref{thm:SteinCorrection} hold for the Markov chain $X_1^{\epsilon},X_2^{\epsilon},\dots$ for any $0 < \epsilon < \epsilon_0$. 
\end{corollary}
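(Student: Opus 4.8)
\section*{Proof proposal}

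The proof is a short splicing together of two ingredients: Theorem~\ref{thm:SteinCorrection} itself, which already supplies every conclusion, and a perturbation theorem for $V$-uniformly ergodic chains. The key observation is that, among the hypotheses of Theorem~\ref{thm:SteinCorrection}, the only ones tied to the transition dynamics of the chain are (i) $\nu$-ergodicity, (ii) $V$-uniform ergodicity with the given drift function $V$, and (iii) the absolute continuity $\pi \ll \nu$ relating the target to the invariant law; all remaining hypotheses --- the Gram--de Finetti property and unbiasedness of $(k_\pi^{ij})$, the moment bounds \eqref{eq:enuStein1}/\eqref{eq:enuStein2} (the former not even referencing $\nu$), and, for the last conclusion, the separating property and Assumption~\ref{ass:KernelExplode} --- depend only on $\pi$, $k_\pi$, and $S$. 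The plan is therefore to show that (i)--(iii) transfer from $P$ to $P_\epsilon$ for all sufficiently small $\epsilon$, and then to invoke Theorem~\ref{thm:SteinCorrection} applied to $X_1^\epsilon, X_2^\epsilon, \dots$ verbatim.

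First I would apply \cite[Theorem~1]{ferre2013regular}: since $\{X_t\}$ is $V$-uniformly (equivalently $V$-geometrically) ergodic and $P_\epsilon \to P$ in total variation, the family is in the regular-perturbation regime of Ferr\'e--Herv\'e--Ledoux, so there exists $\epsilon_0 > 0$ such that for every $0 < \epsilon < \epsilon_0$ the chain $X_t^\epsilon$ admits a unique invariant probability measure $\nu_\epsilon$, is itself $V$-uniformly ergodic (with rate and constant that stay finite, indeed controllable uniformly, on $(0,\epsilon_0)$), and $\nu_\epsilon \to \nu$ in the $V$-weighted, hence total-variation, sense as $\epsilon \to 0^+$. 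This gives (i) and (ii). For (iii), the convergence $\nu_\epsilon \to \nu$ together with $\pi \ll \nu$ yields $\pi \ll \nu_\epsilon$ after possibly shrinking $\epsilon_0$, and $\partial \pi/\partial \nu_\epsilon \to \partial \pi/\partial\nu$, so that if the stronger bound \eqref{eq:enuStein2} holds for $\nu$ it continues to hold for $\nu_\epsilon$ on a further reduction of $\epsilon_0$ (bound \eqref{eq:enuStein1}, involving no density, needs no adjustment). Choosing the Gram--de Finetti array independently of $\{X_t^\epsilon\}$ is a free modelling choice, so the independence hypothesis is preserved.

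With all the hypotheses of Theorem~\ref{thm:SteinCorrection} thereby in force for the perturbed chain whenever $0 < \epsilon < \epsilon_0$, its conclusions hold for the weighted empirical measure produced by Algorithm~\ref{alg:SteinIS} on $X_1^\epsilon, X_2^\epsilon, \dots$: $\mathbb{S}(\hat{\pi}^n\Vert\pi)\cp 0$ under \eqref{eq:enuStein1}, the rate $\mathcal{O}_{\mathbb{P}}(n^{-1/2})$ under \eqref{eq:enuStein2}, and $\hat{\pi}^n \cd \pi$ when $S$ is locally compact and $k_\pi$ is separating and satisfies Assumption~\ref{ass:KernelExplode}. The only point requiring care --- and hence the main obstacle --- is precisely the bookkeeping in the previous paragraph: perturbing the chain also perturbs its invariant measure, so the absolute-continuity statement $\pi \ll \nu_\epsilon$ and the density-dependent moment bound must be re-derived for $\nu_\epsilon$ rather than inherited for free, and this is exactly where the quantitative content of \cite[Theorem~1]{ferre2013regular} (convergence of the invariant measures and uniform control of the ergodicity constants) is used. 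Everything else is a direct transcription of Theorem~\ref{thm:SteinCorrection}.
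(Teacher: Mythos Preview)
Your approach is exactly the paper's: the corollary is stated there as following immediately from Theorem~\ref{thm:SteinCorrection} together with \cite[Theorem~1]{ferre2013regular}, with no further argument given. You have simply unpacked what ``immediately'' means, correctly isolating that the only chain-dependent hypotheses are ergodicity, $V$-uniform ergodicity, and $\pi \ll \nu$, and that the first two transfer to $P_\epsilon$ via the Ferr\'e--Herv\'e--Ledoux perturbation theorem.

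One comment: the step you flag as the main obstacle---deducing $\pi \ll \nu_\epsilon$ and the persistence of the bound \eqref{eq:enuStein2} from total-variation convergence $\nu_\epsilon \to \nu$---is not quite as automatic as your sketch suggests (TV convergence alone does not force absolute continuity to transfer; one can have $\nu_\epsilon \to \nu$ in TV with each $\nu_\epsilon$ supported on a strict subset of $\mathrm{supp}(\nu)$). The paper does not address this either, so your write-up is already more careful than the original; in the applications at hand the perturbed chains have transition densities with respect to a common reference measure, which is what actually secures this step.
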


\section{Numerical experiments}
\label{sec:Numerical}

The results of two brief numerical experiments are shown. The first experiment investigates how the $\mathcal{O}_{\mathbb{P}}(n^{-1/2})$ convergence rate in KSD compares to that of MMD on a simple example where the latter can be computed explicitly. The second experiment considers a challenging target measure from a Bayesian deep learning problem, and shows that the Stein correction can still lead to observable improvements in the convergence rate under an estimated KSD. 

\subsection{Empirical study of the rate of convergence}

Despite the $\mathcal{O}_{\mathbb{P}}(n^{-1/2})$ rate of convergence in Theorem \ref{thm:SteinCorrection}\ref{enu:Stein2}, convergence rates in KSD do not necessarily carry across to other probability metrics. 
To explore the extent to which this rate of convergence might hold with other discrepancy measures, 
we conduct an empirical study with a simple
sampling problem for which the MMD, an alternative convergence determining probability metric popular in machine learning, can be computed exactly. 
Our target $\pi$ is the $d$-dimensional standard normal distribution $\mathcal{N}(\v 0, \m I_d)$ with $d = 20$, which gives $\nabla \log p(x) = -\v x$. We employ the canonical Stein kernel (\ref{eq:KCCKernel}) with IMQ and Gaussian base kernels, given by $k_{\text{IMQ}}(\v x,\v y) = (1 + \|\v x - \v y\|^2)^{-\frac12}$ and $k_{\text{Gauss}}(\v x,\v y) = e^{-\|\v x-\v y\|^2}$ respectively. We remark that the former kernel satisfies Assumption \ref{ass:KernelExplode}, while the latter does not. To sample approximately from $\mathcal{N}(\v 0, \m I_d)$, we use the tamed unadjusted Langevin algorithm (TULA), simulating
the Markov chain
\begin{equation}\label{eq:TULA}
\v X_{k+1} = \v X_k + \frac{h}{2} \cdot \frac{\nabla \log p(\v X_k)}{1 + \gamma \|\nabla \log p(\v X_k)\|} + \sqrt{h} \v Z_k,
\end{equation}
with $\v X_0 = \v 0$, and where each $\v Z_k$ is an independent standard normal random vector in $d$ dimensions,
$h > 0$ is the step size, and $\gamma > 0$ is a taming parameter, ensuring the chain can drift by no more than $h / (2\gamma)$ in Euclidean distance at each step. 
By \cite[Proposition 3]{brosse2018tamed}, for any $h,\gamma > 0$, the conditions of
Theorem \ref{thm:SteinCorrection}\ref{enu:Stein2} are satisfied. In Figure \ref{fig:MMDConvPlot}, we plot MMD to the standard normal distribution for the empirical distributions of
unadjusted and Stein-adjusted samples generated using TULA, and exact iid samples. 
As expected, the unadjusted empirical distribution fails to converge in either MMD or KSD. On the other hand, the exact and Stein-adjusted samples yield empirical distributions that converge not only at the expected $\mathcal{O}(n^{-1/2})$ rate in KSD, but also in MMD. In fact, once sufficiently many samples have been obtained, the Stein-adjusted empirical distribution using the IMQ base kernel exhibits improved performance over exact iid samples under both discrepancy measures. The Gaussian base kernel, which fails to satisfy Assumption \ref{ass:KernelExplode}, exhibits poorer performance in MMD, although does better than unadjusted samples. Altogether, this experiment supports the possibility of transferal for convergence rates from convergence determining KSD to other discrepancy measures. 
\begin{figure}[ht]
\includegraphics[width=0.465\textwidth]{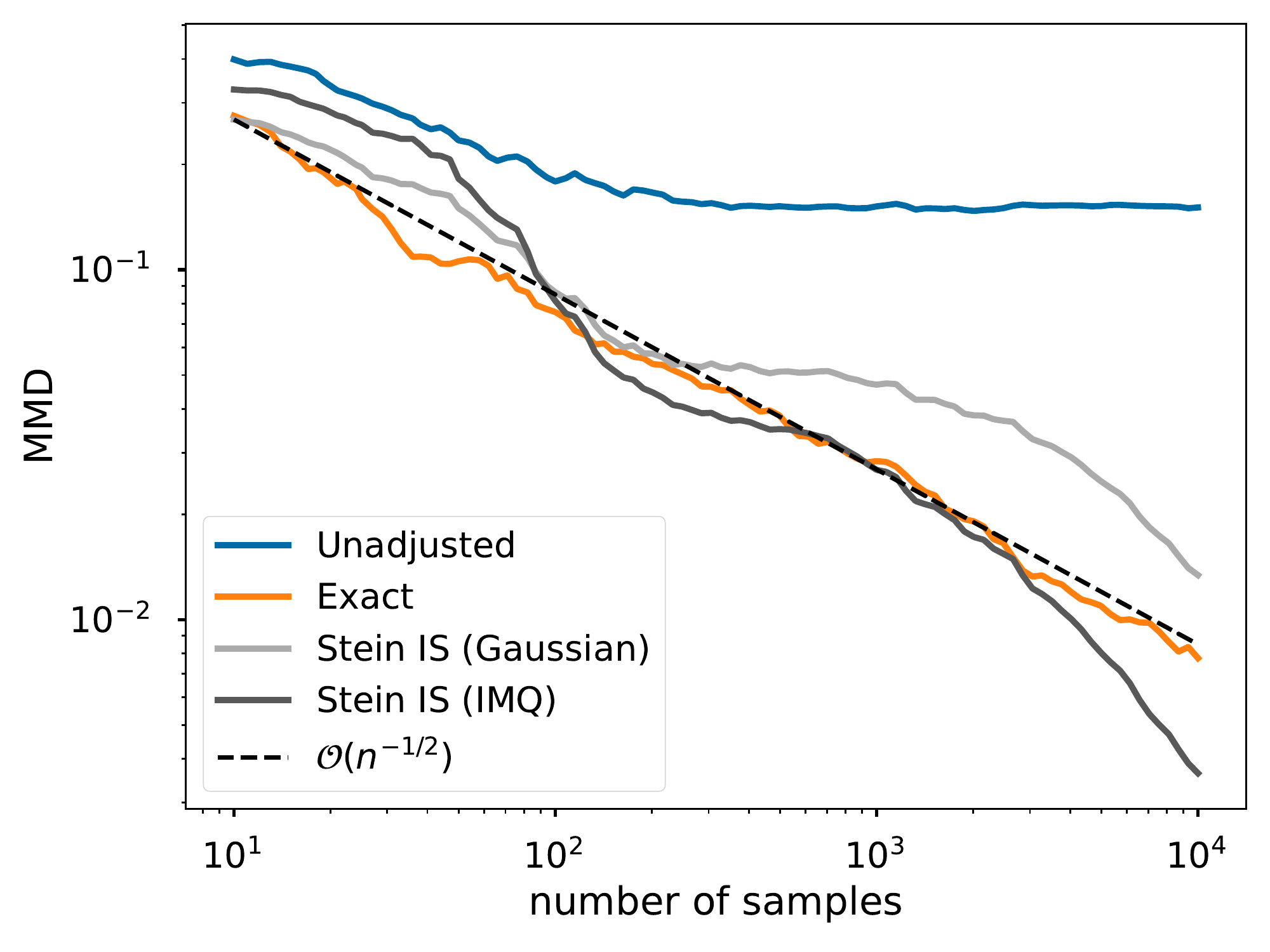}
\includegraphics[width=0.465\textwidth]{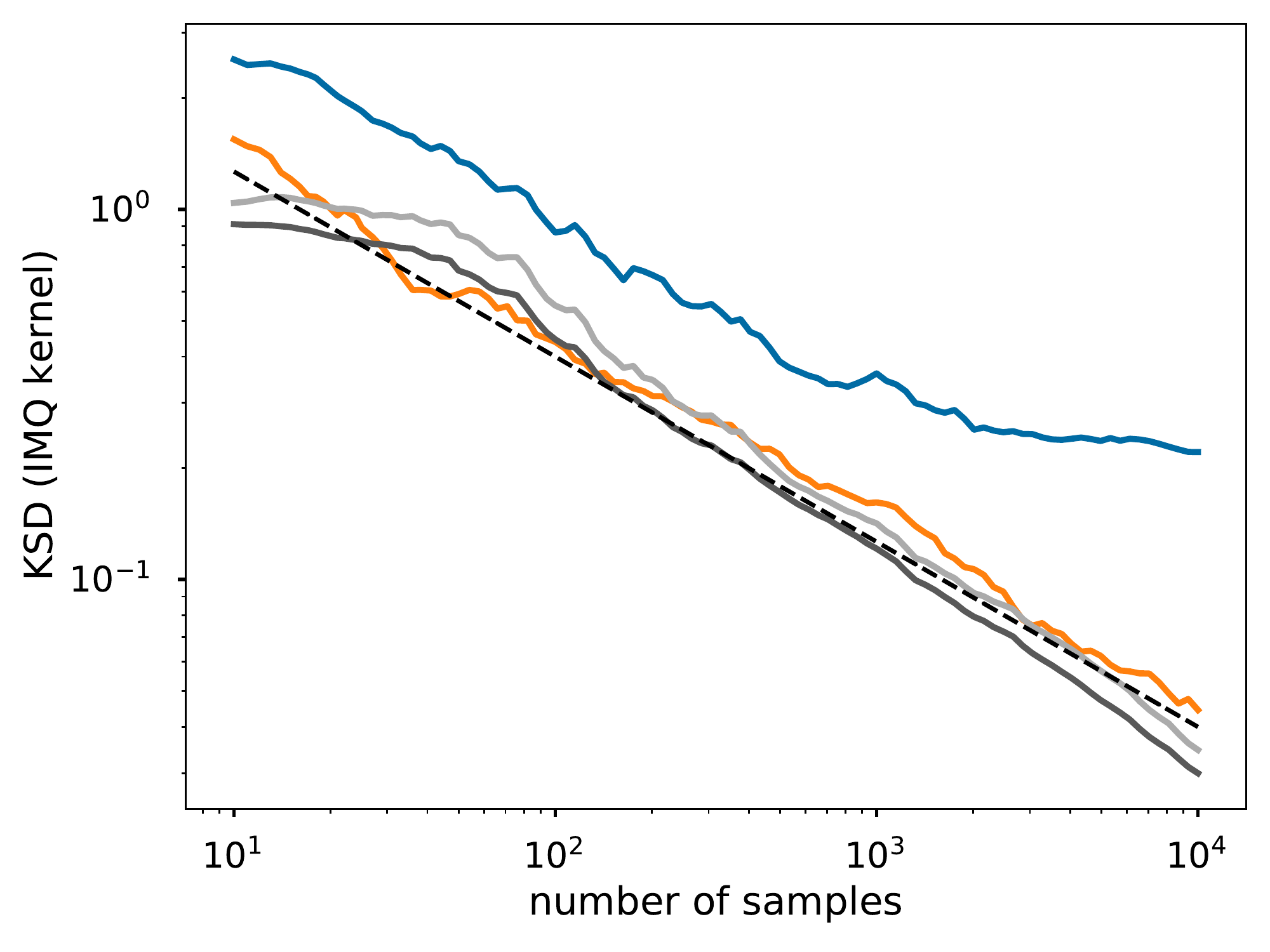}
\caption{\label{fig:MMDConvPlot}Rate of convergence in MMD to the standard multivariate normal distribution in 20 dimensions of unadjusted vs Stein-adjusted empirical distributions. Samples are obtained using TULA with step size $h = 1$ and taming parameter $\gamma = 0.05$. The corresponding Stein-adjusted weighted empirical distributions are computed with respect to the canonical Stein kernel with Gaussian/IMQ kernels and exact samples.}
\end{figure}

\subsection{Comparison with subsampled and full-data KSD}

The second example considers Bayesian logistic regression with a Gaussian prior, yielding a target measure with log-density 
\begin{equation*}
\label{eq:LogRegTarget}
\log p(\v x) \varpropto  -\sum_{i=1}^{n} \left(\log\left(1 + \exp(\v a_{i}^{\top} \v x)\right) - b_{i} \v a_{i}^{\top} \v x \right) + \frac{\lambda}{2}||\v x||^2.
\end{equation*}

Again, the TULA algorithm is used for sampling, but with subsampled gradient (that is, $\log p$ terms are replaced by their subsampled counterpart, $\log p^{\mathscr{S}_t}$). Note that the application of Stein importance sampling to subsampled gradients can be justified by viewing the problem through the lens of random switching Markov chains \cite{cloez2015exponential}. Indeed, TULA with subsampled gradients is a random switching Markov chain indexed over the subsample $\mathscr{S}_t$. For any fixed subsample $\mathscr{S}$, the Markov chain $X^{\mathscr{S}}$ induced by the TULA algorithm with target density $p^{\mathscr{S}}$ is $V_{\mathscr{S}}$-uniformly ergodic, where $V_{\mathscr{S}}(x) = e^{a_{\mathscr{S}} (1+\|x\|^2)^{1/2}}$ \cite[Proposition 3]{brosse2018tamed}. Taking $a = \min_{\mathscr{S}} a_{\mathscr{S}}$, a modification of \cite[Lemma 15.2.9]{meyn2012markov} implies that $X^\mathscr{S}$ is $V$-uniformly ergodic, where $V(x) = e^{a(1+\|x\|^2)^{1/2}}$. Since $\mathscr{S}$ is arbitrary, \cite[Lemma 2.8]{cloez2015exponential} implies that TULA with subsampled gradients is $V^\alpha$-uniformly ergodic for some $\alpha > 0$, and hence satisfies the conditions of Theorem \ref{thm:SteinCorrection}(b).

Two of the larger data sets from the binary regression examples in \cite{ong2018gaussian} are considered, namely {\sf krkp} ($n=3196 , d=38$) and {\sf spam} $(n=4601, d = 105)$. In the former example, the step size and taming parameter are taken to be $h = 0.1$ and $\gamma = 0.05$, respectively, with $h = 0.05$ and $\gamma = 0.01$ in the latter example.
To coincide with the use of subsampled gradients for sampling, we also consider performance of Stein importance sampling under a subsampled Stein kernel. For the conditions of Theorem \ref{thm:SteinCorrection} to hold, it is essential that the subsampled gradients in the Stein kernel are independent of those used in the sampling proceess. Therefore, there are two subsampling-related parameters in this experiment: the number $n_s$ of subsampled data points at each iteration of TULA, and the number $n_k$ of subsampled data points in computing the subsampled Stein kernel. For simplicity, we take $n_s = n_k$, and set $n_s = 500$ and $n_s = 1000$ for {\sf krkp} and {\sf spam}, respectively. 

Results are shown in Figure  \ref{fig:LogRegResults}, where we plot the (full-data) KSD for the unadjusted samples, the samples obtained with the full-data Stein correction, and the samples obtained with the subsampled Stein correction. We observe in both examples the full-data Stein correction achieving an approximate $\mathcal{O}(n^{-1/2})$ rate of convergence toward the end of the simulation. We observe a similar phenomenon for the subsampled Stein correction in the {\sf krkp} example, but not necessarily for the {\sf spam} example, which is still exhibiting prelimiting behaviour.

\begin{figure}[ht]
\includegraphics[width=0.465\textwidth]{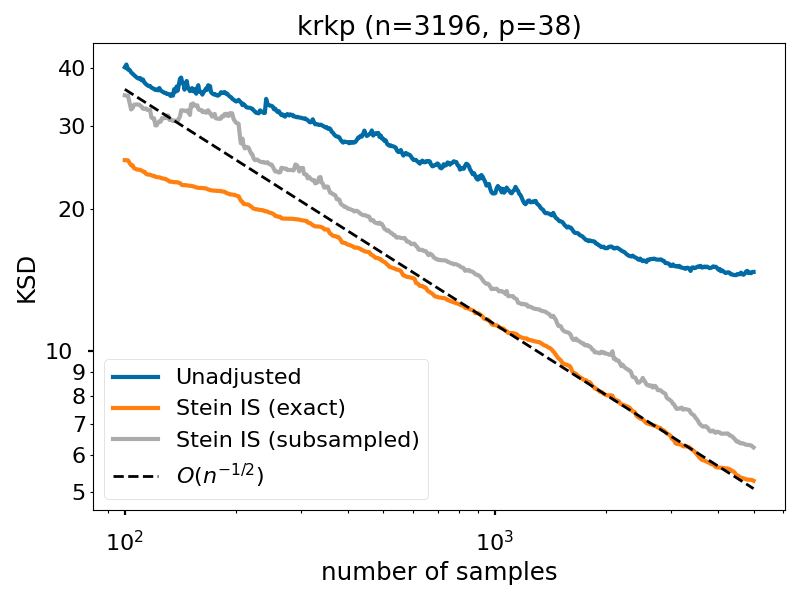}
\includegraphics[width=0.465\textwidth]{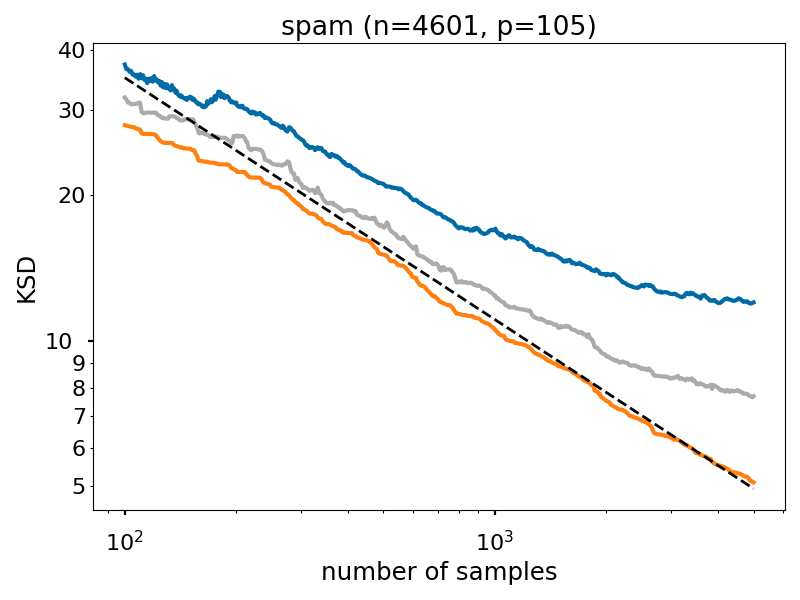}
\caption{Results for the two binary regression problems {\sf krkp} and {\sf spam}. All KSD values reported are computed with respect to the full data set.}\label{fig:LogRegResults}
\end{figure}

\section*{Acknowledgements}
Liam Hodgkinson and Fred Roosta are affiliated with the
International Computer Science Institute, Berkeley, CA, 94704, USA. All authors are supported in part by the Australian Centre of Excellence for Mathematical and
Statistical Frontiers (ACEMS), under Australian Research Council grant CE140100049.
Fred Roosta is supported by DARPA D3M (FA8750-17-2-0122), Cray/77000, and ARC DECRA (DE180100923). We are grateful to Samuel Power for drawing our attention to Zanella processes.


\begin{thebibliography}{69}

\bibitem{alegria2018family}
\begin{barticle}[author]
\bauthor{\bsnm{Alegria},~\bfnm{Alfredo}\binits{A.}},
  \bauthor{\bsnm{Cuevas},~\bfnm{Francisco}\binits{F.}},
  \bauthor{\bsnm{Diggle},~\bfnm{Peter}\binits{P.}} \AND
  \bauthor{\bsnm{Porcu},~\bfnm{Emilio}\binits{E.}}
(\byear{2018}).
\btitle{A family of covariance functions for random fields on spheres}.
\bjournal{CSGB Research Reports, Department of Mathematics, Aarhus University}.
\end{barticle}
\endbibitem

\bibitem{aronszajn1950theory}
\begin{barticle}[author]
\bauthor{\bsnm{Aronszajn},~\bfnm{Nachman}\binits{N.}}
(\byear{1950}).
\btitle{Theory of reproducing kernels}.
\bjournal{Transactions of the American Mathematical Society}
\bvolume{68}
\bpages{337--404}.
\end{barticle}
\endbibitem

\bibitem{barbour1988stein}
\begin{barticle}[author]
\bauthor{\bsnm{Barbour},~\bfnm{Andrew~D}\binits{A.~D.}}
(\byear{1988}).
\btitle{{S}tein's method and {P}oisson process convergence}.
\bjournal{Journal of Applied Probability}
\bvolume{25}
\bpages{175--184}.
\end{barticle}
\endbibitem

\bibitem{barp2018riemannian}
\begin{barticle}[author]
\bauthor{\bsnm{Barp},~\bfnm{Alessandro}\binits{A.}},
  \bauthor{\bsnm{Oates},~\bfnm{Chris}\binits{C.}},
  \bauthor{\bsnm{Porcu},~\bfnm{Emilio}\binits{E.}} \AND
  \bauthor{\bsnm{Girolami},~\bfnm{Mark}\binits{M.}}
(\byear{2018}).
\btitle{A {Riemannian-Stein} Kernel Method}.
\bjournal{arXiv preprint arXiv:1810.04946}.
\end{barticle}
\endbibitem

\bibitem{boucheron2013concentration}
\begin{bbook}[author]
\bauthor{\bsnm{Boucheron},~\bfnm{St{\'e}phane}\binits{S.}},
  \bauthor{\bsnm{Lugosi},~\bfnm{G{\'a}bor}\binits{G.}} \AND
  \bauthor{\bsnm{Massart},~\bfnm{Pascal}\binits{P.}}
(\byear{2013}).
\btitle{Concentration inequalities: A nonasymptotic theory of independence}.
\bpublisher{Oxford university press}.
\end{bbook}
\endbibitem

\bibitem{brosse2018tamed}
\begin{barticle}[author]
\bauthor{\bsnm{Brosse},~\bfnm{Nicolas}\binits{N.}},
  \bauthor{\bsnm{Durmus},~\bfnm{Alain}\binits{A.}},
  \bauthor{\bsnm{Moulines},~\bfnm{{\'E}ric}\binits{{\'E}.}} \AND
  \bauthor{\bsnm{Sabanis},~\bfnm{Sotirios}\binits{S.}}
(\byear{2018}).
\btitle{{The tamed unadjusted Langevin algorithm}}.
\bjournal{Stochastic Processes and their Applications}.
\end{barticle}
\endbibitem

\bibitem{brown2001stein}
\begin{barticle}[author]
\bauthor{\bsnm{Brown},~\bfnm{Timothy~C}\binits{T.~C.}} \AND
  \bauthor{\bsnm{Xia},~\bfnm{Aihua}\binits{A.}}
(\byear{2001}).
\btitle{Stein's method and birth-death processes}.
\bjournal{Annals of Probability}
\bpages{1373--1403}.
\end{barticle}
\endbibitem

\bibitem{cattiaux2014semi}
\begin{bincollection}[author]
\bauthor{\bsnm{Cattiaux},~\bfnm{Patrick}\binits{P.}} \AND
  \bauthor{\bsnm{Guillin},~\bfnm{Arnaud}\binits{A.}}
(\byear{2014}).
\btitle{{Semi log-concave Markov diffusions}}.
In \bbooktitle{S{\'e}minaire de probabilit{\'e}s XLVI}
\bpages{231--292}.
\bpublisher{Springer}.
\end{bincollection}
\endbibitem

\bibitem{chen2019stein}
\begin{binproceedings}[author]
\bauthor{\bsnm{Chen},~\bfnm{Wilson~Ye}\binits{W.~Y.}},
  \bauthor{\bsnm{Barp},~\bfnm{Alessandro}\binits{A.}},
  \bauthor{\bsnm{Briol},~\bfnm{Francois-Xavier}\binits{F.-X.}},
  \bauthor{\bsnm{Gorham},~\bfnm{Jackson}\binits{J.}},
  \bauthor{\bsnm{Girolami},~\bfnm{Mark}\binits{M.}},
  \bauthor{\bsnm{Mackey},~\bfnm{Lester}\binits{L.}} \AND
  \bauthor{\bsnm{Oates},~\bfnm{Chris}\binits{C.}}
(\byear{2019}).
\btitle{Stein Point {M}arkov Chain {M}onte {C}arlo}.
In \bbooktitle{Proceedings of the 36th International Conference on Machine
  Learning}
\bpages{1011--1021}.
\end{binproceedings}
\endbibitem

\bibitem{chen2018stein}
\begin{binproceedings}[author]
\bauthor{\bsnm{Chen},~\bfnm{Wilson~Ye}\binits{W.~Y.}},
  \bauthor{\bsnm{Mackey},~\bfnm{Lester}\binits{L.}},
  \bauthor{\bsnm{Gorham},~\bfnm{Jackson}\binits{J.}},
  \bauthor{\bsnm{Briol},~\bfnm{Francois-Xavier}\binits{F.-X.}} \AND
  \bauthor{\bsnm{Oates},~\bfnm{Chris}\binits{C.}}
(\byear{2018}).
\btitle{Stein Points}.
In \bbooktitle{Proceedings of the 35th International Conference on Machine
  Learning}
\bpages{844--853}.
\end{binproceedings}
\endbibitem

\bibitem{chwialkowski2016kernel}
\begin{binproceedings}[author]
\bauthor{\bsnm{Chwialkowski},~\bfnm{Kacper}\binits{K.}},
  \bauthor{\bsnm{Strathmann},~\bfnm{Heiko}\binits{H.}} \AND
  \bauthor{\bsnm{Gretton},~\bfnm{Arthur}\binits{A.}}
(\byear{2016}).
\btitle{A kernel test of goodness of fit}.
\bpublisher{JMLR: Workshop and Conference Proceedings}.
\end{binproceedings}
\endbibitem

\bibitem{cloez2015exponential}
\begin{barticle}[author]
\bauthor{\bsnm{Cloez},~\bfnm{Bertrand}\binits{B.}} \AND
  \bauthor{\bsnm{Hairer},~\bfnm{Martin}\binits{M.}}
(\byear{2015}).
\btitle{Exponential ergodicity for Markov processes with random switching}.
\bjournal{Bernoulli}
\bvolume{21}
\bpages{505--536}.
\end{barticle}
\endbibitem

\bibitem{courtade2019existence}
\begin{binproceedings}[author]
\bauthor{\bsnm{Courtade},~\bfnm{Thomas~A}\binits{T.~A.}},
  \bauthor{\bsnm{Fathi},~\bfnm{Max}\binits{M.}} \AND
  \bauthor{\bsnm{Pananjady},~\bfnm{Ashwin}\binits{A.}}
(\byear{2019}).
\btitle{{Existence of Stein kernels under a spectral gap, and discrepancy
  bounds}}.
In \bbooktitle{Annales de l'Institut Henri Poincar{\'e}, Probabilit{\'e}s et
  Statistiques}
\bvolume{55}
\bpages{777--790}.
\bpublisher{Institut Henri Poincar{\'e}}.
\end{binproceedings}
\endbibitem

\bibitem{dalalyan2019user}
\begin{barticle}[author]
\bauthor{\bsnm{Dalalyan},~\bfnm{Arnak~S}\binits{A.~S.}} \AND
  \bauthor{\bsnm{Karagulyan},~\bfnm{Avetik}\binits{A.}}
(\byear{2019}).
\btitle{{User-friendly guarantees for the Langevin Monte Carlo with inaccurate
  gradient}}.
\bjournal{Stochastic Processes and their Applications}
\bvolume{129}
\bpages{5278--5311}.
\end{barticle}
\endbibitem

\bibitem{detommaso2018stein}
\begin{binproceedings}[author]
\bauthor{\bsnm{Detommaso},~\bfnm{Gianluca}\binits{G.}},
  \bauthor{\bsnm{Cui},~\bfnm{Tiangang}\binits{T.}},
  \bauthor{\bsnm{Marzouk},~\bfnm{Youssef}\binits{Y.}},
  \bauthor{\bsnm{Spantini},~\bfnm{Alessio}\binits{A.}} \AND
  \bauthor{\bsnm{Scheichl},~\bfnm{Robert}\binits{R.}}
(\byear{2018}).
\btitle{A {Stein variational Newton method}}.
In \bbooktitle{Advances in Neural Information Processing Systems}
\bpages{9169--9179}.
\end{binproceedings}
\endbibitem

\bibitem{dovbysh1984gram}
\begin{barticle}[author]
\bauthor{\bsnm{Dovbysh},~\bfnm{LN}\binits{L.}} \AND
  \bauthor{\bsnm{Sudakov},~\bfnm{VN}\binits{V.}}
(\byear{1984}).
\btitle{{Gram-de Finetti matrices}}.
\bjournal{Journal of Soviet Mathematics}
\bvolume{27}
\bpages{3047--3054}.
\end{barticle}
\endbibitem

\bibitem{down1995exponential}
\begin{barticle}[author]
\bauthor{\bsnm{Down},~\bfnm{Douglas}\binits{D.}},
  \bauthor{\bsnm{Meyn},~\bfnm{Sean~P}\binits{S.~P.}} \AND
  \bauthor{\bsnm{Tweedie},~\bfnm{Richard~L}\binits{R.~L.}}
(\byear{1995}).
\btitle{Exponential and uniform ergodicity of {M}arkov processes}.
\bjournal{The Annals of Probability}
\bpages{1671--1691}.
\end{barticle}
\endbibitem

\bibitem{Durmus2017}
\begin{barticle}[author]
\bauthor{\bsnm{Durmus},~\bfnm{Alain}\binits{A.}} \AND
  \bauthor{\bsnm{Moulines},~\bfnm{A.}\binits{A.}}
(\byear{2017}).
\btitle{Non-asymptotic convergence analysis for the unadjusted {L}angevin
  algorithm}.
\bjournal{Ann. Appl. Probab.}
\bvolume{27}
\bpages{1551--1587}.
\end{barticle}
\endbibitem

\bibitem{durmus2019high}
\begin{barticle}[author]
\bauthor{\bsnm{Durmus},~\bfnm{Alain}\binits{A.}} \AND
  \bauthor{\bsnm{Moulines},~\bfnm{Eric}\binits{E.}}
(\byear{2019}).
\btitle{{High-dimensional Bayesian inference via the unadjusted Langevin
  algorithm}}.
\bjournal{Bernoulli}
\bvolume{25}
\bpages{2854--2882}.
\end{barticle}
\endbibitem

\bibitem{dwivedi2019log}
\begin{barticle}[author]
\bauthor{\bsnm{Dwivedi},~\bfnm{Raaz}\binits{R.}},
  \bauthor{\bsnm{Chen},~\bfnm{Yuansi}\binits{Y.}},
  \bauthor{\bsnm{Wainwright},~\bfnm{Martin~J}\binits{M.~J.}} \AND
  \bauthor{\bsnm{Yu},~\bfnm{Bin}\binits{B.}}
(\byear{2019}).
\btitle{{Log-concave sampling: {M}etropolis-{H}astings algorithms are fast}}.
\bjournal{Journal of Machine Learning Research}
\bvolume{20}
\bpages{1--42}.
\end{barticle}
\endbibitem

\bibitem{ethier2009markov}
\begin{bbook}[author]
\bauthor{\bsnm{Ethier},~\bfnm{Stewart~N}\binits{S.~N.}} \AND
  \bauthor{\bsnm{Kurtz},~\bfnm{Thomas~G}\binits{T.~G.}}
(\byear{2009}).
\btitle{Markov processes: characterization and convergence}
\bvolume{282}.
\bpublisher{John Wiley \& Sons}.
\end{bbook}
\endbibitem

\bibitem{ferre2013regular}
\begin{barticle}[author]
\bauthor{\bsnm{Ferr{\'e}},~\bfnm{D{\'e}borah}\binits{D.}},
  \bauthor{\bsnm{Herv{\'e}},~\bfnm{Lo{\"\i}c}\binits{L.}} \AND
  \bauthor{\bsnm{Ledoux},~\bfnm{James}\binits{J.}}
(\byear{2013}).
\btitle{Regular perturbation of {V}-geometrically ergodic {M}arkov chains}.
\bjournal{Journal of Applied Probability}
\bvolume{50}
\bpages{184--194}.
\end{barticle}
\endbibitem

\bibitem{fort2012simple}
\begin{barticle}[author]
\bauthor{\bsnm{Fort},~\bfnm{Gersende}\binits{G.}},
  \bauthor{\bsnm{Moulines},~\bfnm{Eric}\binits{E.}},
  \bauthor{\bsnm{Priouret},~\bfnm{Pierre}\binits{P.}} \AND
  \bauthor{\bsnm{Vandekerkhove},~\bfnm{Pierre}\binits{P.}}
(\byear{2012}).
\btitle{A simple variance inequality for {$U$}-statistics of a {M}arkov chain
  with applications}.
\bjournal{Statistics \& Probability Letters}
\bvolume{82}
\bpages{1193--1201}.
\end{barticle}
\endbibitem

\bibitem{ghosh2018journey}
\begin{barticle}[author]
\bauthor{\bsnm{Ghosh},~\bfnm{Swarnendu}\binits{S.}},
  \bauthor{\bsnm{Das},~\bfnm{Nibaran}\binits{N.}},
  \bauthor{\bsnm{Gon{\c{c}}alves},~\bfnm{Teresa}\binits{T.}},
  \bauthor{\bsnm{Quaresma},~\bfnm{Paulo}\binits{P.}} \AND
  \bauthor{\bsnm{Kundu},~\bfnm{Mahantapas}\binits{M.}}
(\byear{2018}).
\btitle{The journey of graph kernels through two decades}.
\bjournal{Computer Science Review}
\bvolume{27}
\bpages{88--111}.
\end{barticle}
\endbibitem

\bibitem{glynn1996liapounov}
\begin{barticle}[author]
\bauthor{\bsnm{Glynn},~\bfnm{Peter~W}\binits{P.~W.}} \AND
  \bauthor{\bsnm{Meyn},~\bfnm{Sean~P}\binits{S.~P.}}
(\byear{1996}).
\btitle{A {L}iapounov bound for solutions of the {P}oisson equation}.
\bjournal{The Annals of Probability}
\bvolume{24}
\bpages{916--931}.
\end{barticle}
\endbibitem

\bibitem{gorham2019}
\begin{barticle}[author]
\bauthor{\bsnm{Gorham},~\bfnm{Jackson}\binits{J.}},
  \bauthor{\bsnm{Duncan},~\bfnm{Andrew~B.}\binits{A.~B.}},
  \bauthor{\bsnm{Vollmer},~\bfnm{Sebastian~J.}\binits{S.~J.}} \AND
  \bauthor{\bsnm{Mackey},~\bfnm{Lester}\binits{L.}}
(\byear{2019}).
\btitle{Measuring sample quality with diffusions}.
\bjournal{Ann. Appl. Probab.}
\bvolume{29}
\bpages{2884--2928}.
\bdoi{10.1214/19-AAP1467}
\end{barticle}
\endbibitem

\bibitem{gorham2017measuring}
\begin{binproceedings}[author]
\bauthor{\bsnm{Gorham},~\bfnm{Jackson}\binits{J.}} \AND
  \bauthor{\bsnm{Mackey},~\bfnm{Lester}\binits{L.}}
(\byear{2017}).
\btitle{Measuring Sample Quality with Kernels}.
In \bbooktitle{International Conference on Machine Learning}
\bpages{1292--1301}.
\end{binproceedings}
\endbibitem

\bibitem{hansen2003geometric}
\begin{barticle}[author]
\bauthor{\bsnm{Hansen},~\bfnm{Niels~Richard}\binits{N.~R.}}
(\byear{2003}).
\btitle{Geometric ergodicity of discrete-time approximations to multivariate
  diffusions}.
\bjournal{Bernoulli}
\bvolume{9}
\bpages{725--743}.
\end{barticle}
\endbibitem

\bibitem{heek2019bayesian}
\begin{barticle}[author]
\bauthor{\bsnm{Heek},~\bfnm{Jonathan}\binits{J.}} \AND
  \bauthor{\bsnm{Kalchbrenner},~\bfnm{Nal}\binits{N.}}
(\byear{2019}).
\btitle{{Bayesian Inference for Large Scale Image Classification}}.
\bjournal{arXiv preprint arXiv:1908.03491}.
\end{barticle}
\endbibitem

\bibitem{hodgkinson2021implicit}
\begin{barticle}[author]
\bauthor{\bsnm{Hodgkinson},~\bfnm{Liam}\binits{L.}},
  \bauthor{\bsnm{Salomone},~\bfnm{Robert}\binits{R.}} \AND
  \bauthor{\bsnm{Roosta},~\bfnm{Fred}\binits{F.}}
(\byear{2021}).
\btitle{Implicit Langevin algorithms for sampling from log-concave densities}.
\bjournal{Journal of Machine Learning Research}
\bvolume{22}
\bpages{1--30}.
\end{barticle}
\endbibitem

\bibitem{kallenberg2006foundations}
\begin{bbook}[author]
\bauthor{\bsnm{Kallenberg},~\bfnm{Olav}\binits{O.}}
(\byear{2006}).
\btitle{Foundations of Modern Probability}.
\bpublisher{Springer Science \& Business Media}.
\end{bbook}
\endbibitem

\bibitem{kanagawa2019kernel}
\begin{barticle}[author]
\bauthor{\bsnm{Kanagawa},~\bfnm{Heishiro}\binits{H.}},
  \bauthor{\bsnm{Jitkrittum},~\bfnm{Wittawat}\binits{W.}},
  \bauthor{\bsnm{Mackey},~\bfnm{Lester}\binits{L.}},
  \bauthor{\bsnm{Fukumizu},~\bfnm{Kenji}\binits{K.}} \AND
  \bauthor{\bsnm{Gretton},~\bfnm{Arthur}\binits{A.}}
(\byear{2019}).
\btitle{A Kernel {S}tein Test for Comparing Latent Variable Models}.
\bjournal{arXiv preprint arXiv:1907.00586}.
\end{barticle}
\endbibitem

\bibitem{kroese2013handbook}
\begin{bbook}[author]
\bauthor{\bsnm{Kroese},~\bfnm{Dirk~P}\binits{D.~P.}},
  \bauthor{\bsnm{Taimre},~\bfnm{Thomas}\binits{T.}} \AND
  \bauthor{\bsnm{Botev},~\bfnm{Zdravko~I}\binits{Z.~I.}}
(\byear{2013}).
\btitle{Handbook of {Monte Carlo} methods}
\bvolume{706}.
\bpublisher{John Wiley \& Sons}.
\end{bbook}
\endbibitem

\bibitem{lee1990u}
\begin{bbook}[author]
\bauthor{\bsnm{Lee},~\bfnm{A~J}\binits{A.~J.}}
(\byear{1990}).
\btitle{U-statistics: {Theory and Practice}}.
\bpublisher{Routledge}.
\end{bbook}
\endbibitem

\bibitem{levin2017markov}
\begin{bbook}[author]
\bauthor{\bsnm{Levin},~\bfnm{David~A}\binits{D.~A.}} \AND
  \bauthor{\bsnm{Peres},~\bfnm{Yuval}\binits{Y.}}
(\byear{2017}).
\btitle{Markov chains and mixing times}
\bvolume{107}.
\bpublisher{American Mathematical Soc.}
\end{bbook}
\endbibitem

\bibitem{ley2017stein}
\begin{barticle}[author]
\bauthor{\bsnm{Ley},~\bfnm{Christophe}\binits{C.}},
  \bauthor{\bsnm{Reinert},~\bfnm{Gesine}\binits{G.}} \AND
  \bauthor{\bsnm{Swan},~\bfnm{Yvik}\binits{Y.}}
(\byear{2017}).
\btitle{Stein’s method for comparison of univariate distributions}.
\bjournal{Probability Surveys}
\bvolume{14}
\bpages{1--52}.
\end{barticle}
\endbibitem

\bibitem{liu2018riemannian}
\begin{binproceedings}[author]
\bauthor{\bsnm{Liu},~\bfnm{Chang}\binits{C.}} \AND
  \bauthor{\bsnm{Zhu},~\bfnm{Jun}\binits{J.}}
(\byear{2018}).
\btitle{{Riemannian Stein variational gradient descent for Bayesian
  inference}}.
In \bbooktitle{Thirty-Second AAAI Conference on Artificial Intelligence}.
\end{binproceedings}
\endbibitem

\bibitem{liu2016two}
\begin{barticle}[author]
\bauthor{\bsnm{Liu},~\bfnm{Qiang}\binits{Q.}} \AND
  \bauthor{\bsnm{Feng},~\bfnm{Yihao}\binits{Y.}}
(\byear{2016}).
\btitle{Two methods for wild variational inference}.
\bjournal{arXiv preprint arXiv:1612.00081}.
\end{barticle}
\endbibitem

\bibitem{Liu2017}
\begin{binproceedings}[author]
\bauthor{\bsnm{Liu},~\bfnm{Qiang}\binits{Q.}} \AND
  \bauthor{\bsnm{Lee},~\bfnm{Jason}\binits{J.}}
(\byear{2017}).
\btitle{{Black-box Importance Sampling}}.
In \bbooktitle{Proceedings of the 20th International Conference on Artificial
  Intelligence and Statistics}
\bpages{952--961}.
\end{binproceedings}
\endbibitem

\bibitem{liu2016kernelized}
\begin{binproceedings}[author]
\bauthor{\bsnm{Liu},~\bfnm{Qiang}\binits{Q.}},
  \bauthor{\bsnm{Lee},~\bfnm{Jason}\binits{J.}} \AND
  \bauthor{\bsnm{Jordan},~\bfnm{Michael}\binits{M.}}
(\byear{2016}).
\btitle{{A kernelized Stein discrepancy for goodness-of-fit tests}}.
In \bbooktitle{International Conference on Machine Learning}
\bpages{276--284}.
\end{binproceedings}
\endbibitem

\bibitem{liu2016stein}
\begin{binproceedings}[author]
\bauthor{\bsnm{Liu},~\bfnm{Qiang}\binits{Q.}} \AND
  \bauthor{\bsnm{Wang},~\bfnm{Dilin}\binits{D.}}
(\byear{2016}).
\btitle{Stein variational gradient descent: A general purpose bayesian
  inference algorithm}.
In \bbooktitle{Advances in Neural Information Processing Systems}
\bpages{2378--2386}.
\end{binproceedings}
\endbibitem

\bibitem{ma2015complete}
\begin{binproceedings}[author]
\bauthor{\bsnm{Ma},~\bfnm{Yi-An}\binits{Y.-A.}},
  \bauthor{\bsnm{Chen},~\bfnm{Tianqi}\binits{T.}} \AND
  \bauthor{\bsnm{Fox},~\bfnm{Emily}\binits{E.}}
(\byear{2015}).
\btitle{{A complete recipe for stochastic gradient MCMC}}.
In \bbooktitle{Advances in Neural Information Processing Systems}
\bpages{2917--2925}.
\end{binproceedings}
\endbibitem

\bibitem{meyn2012markov}
\begin{bbook}[author]
\bauthor{\bsnm{Meyn},~\bfnm{Sean~P}\binits{S.~P.}} \AND
  \bauthor{\bsnm{Tweedie},~\bfnm{Richard~L}\binits{R.~L.}}
(\byear{2012}).
\btitle{Markov chains and stochastic stability}.
\bpublisher{Springer Science \& Business Media}.
\end{bbook}
\endbibitem

\bibitem{micchelli2006universal}
\begin{barticle}[author]
\bauthor{\bsnm{Micchelli},~\bfnm{Charles~A}\binits{C.~A.}},
  \bauthor{\bsnm{Xu},~\bfnm{Yuesheng}\binits{Y.}} \AND
  \bauthor{\bsnm{Zhang},~\bfnm{Haizhang}\binits{H.}}
(\byear{2006}).
\btitle{Universal kernels}.
\bjournal{Journal of Machine Learning Research}
\bvolume{7}
\bpages{2651--2667}.
\end{barticle}
\endbibitem

\bibitem{miller2001completely}
\begin{barticle}[author]
\bauthor{\bsnm{Miller},~\bfnm{Kenneth~S}\binits{K.~S.}} \AND
  \bauthor{\bsnm{Samko},~\bfnm{Stefan~G}\binits{S.~G.}}
(\byear{2001}).
\btitle{Completely monotonic functions}.
\bjournal{Integral Transforms and Special Functions}
\bvolume{12}
\bpages{389--402}.
\end{barticle}
\endbibitem

\bibitem{muandet2017kernel}
\begin{barticle}[author]
\bauthor{\bsnm{Muandet},~\bfnm{Krikamol}\binits{K.}},
  \bauthor{\bsnm{Fukumizu},~\bfnm{Kenji}\binits{K.}},
  \bauthor{\bsnm{Sriperumbudur},~\bfnm{Bharath}\binits{B.}} \AND
  \bauthor{\bsnm{Sch{\"o}lkopf},~\bfnm{Bernhard}\binits{B.}}
(\byear{2017}).
\btitle{{Kernel mean embedding of distributions: A review and beyond}}.
\bjournal{Foundations and Trends in Machine Learning}
\bvolume{10}
\bpages{1--141}.
\end{barticle}
\endbibitem

\bibitem{muller1997integral}
\begin{barticle}[author]
\bauthor{\bsnm{M{\"u}ller},~\bfnm{Alfred}\binits{A.}}
(\byear{1997}).
\btitle{Integral probability metrics and their generating classes of
  functions}.
\bjournal{Advances in Applied Probability}
\bvolume{29}
\bpages{429--443}.
\end{barticle}
\endbibitem

\bibitem{nocedal2006numerical}
\begin{bbook}[author]
\bauthor{\bsnm{Nocedal},~\bfnm{Jorge}\binits{J.}} \AND
  \bauthor{\bsnm{Wright},~\bfnm{Stephen}\binits{S.}}
(\byear{2006}).
\btitle{Numerical optimization}.
\bpublisher{Springer Science \& Business Media}.
\end{bbook}
\endbibitem

\bibitem{oates2019convergence}
\begin{barticle}[author]
\bauthor{\bsnm{Oates},~\bfnm{Chris~J}\binits{C.~J.}},
  \bauthor{\bsnm{Cockayne},~\bfnm{Jon}\binits{J.}},
  \bauthor{\bsnm{Briol},~\bfnm{Fran{\c{c}}ois-Xavier}\binits{F.-X.}} \AND
  \bauthor{\bsnm{Girolami},~\bfnm{Mark}\binits{M.}}
(\byear{2019}).
\btitle{{Convergence rates for a class of estimators based on Stein’s
  method}}.
\bjournal{Bernoulli}
\bvolume{25}
\bpages{1141--1159}.
\end{barticle}
\endbibitem

\bibitem{oates2017control}
\begin{barticle}[author]
\bauthor{\bsnm{Oates},~\bfnm{Chris~J}\binits{C.~J.}},
  \bauthor{\bsnm{Girolami},~\bfnm{Mark}\binits{M.}} \AND
  \bauthor{\bsnm{Chopin},~\bfnm{Nicolas}\binits{N.}}
(\byear{2017}).
\btitle{{Control functionals for Monte Carlo integration}}.
\bjournal{Journal of the Royal Statistical Society: Series B (Statistical
  Methodology)}
\bvolume{79}
\bpages{695--718}.
\end{barticle}
\endbibitem

\bibitem{ong2018gaussian}
\begin{barticle}[author]
\bauthor{\bsnm{Ong},~\bfnm{Victor M-H}\binits{V.~M.-H.}},
  \bauthor{\bsnm{Nott},~\bfnm{David~J}\binits{D.~J.}} \AND
  \bauthor{\bsnm{Smith},~\bfnm{Michael~S}\binits{M.~S.}}
(\byear{2018}).
\btitle{Gaussian variational approximation with a factor covariance structure}.
\bjournal{Journal of Computational and Graphical Statistics}
\bvolume{27}
\bpages{465--478}.
\end{barticle}
\endbibitem

\bibitem{panchenko2010dovbysh}
\begin{barticle}[author]
\bauthor{\bsnm{Panchenko},~\bfnm{Dmitry}\binits{D.}}
(\byear{2010}).
\btitle{On the {Dovbysh-Sudakov} representation result}.
\bjournal{Electronic Communications in Probability}
\bvolume{15}
\bpages{330--338}.
\end{barticle}
\endbibitem

\bibitem{power2019accelerated}
\begin{barticle}[author]
\bauthor{\bsnm{Power},~\bfnm{Samuel}\binits{S.}} \AND
  \bauthor{\bsnm{Goldman},~\bfnm{Jacob~Vorstrup}\binits{J.~V.}}
(\byear{2019}).
\btitle{{Accelerated Sampling on Discrete Spaces with Non-Reversible Markov
  Processes}}.
\bjournal{arXiv preprint arXiv:1912.04681}.
\end{barticle}
\endbibitem

\bibitem{pu2017vae}
\begin{binproceedings}[author]
\bauthor{\bsnm{Pu},~\bfnm{Yuchen}\binits{Y.}},
  \bauthor{\bsnm{Gan},~\bfnm{Zhe}\binits{Z.}},
  \bauthor{\bsnm{Henao},~\bfnm{Ricardo}\binits{R.}},
  \bauthor{\bsnm{Li},~\bfnm{Chunyuan}\binits{C.}},
  \bauthor{\bsnm{Han},~\bfnm{Shaobo}\binits{S.}} \AND
  \bauthor{\bsnm{Carin},~\bfnm{Lawrence}\binits{L.}}
(\byear{2017}).
\btitle{{VAE learning via Stein variational gradient descent}}.
In \bbooktitle{Advances in Neural Information Processing Systems}
\bpages{4236--4245}.
\end{binproceedings}
\endbibitem

\bibitem{quiroz2019speeding}
\begin{barticle}[author]
\bauthor{\bsnm{Quiroz},~\bfnm{Matias}\binits{M.}},
  \bauthor{\bsnm{Kohn},~\bfnm{Robert}\binits{R.}},
  \bauthor{\bsnm{Villani},~\bfnm{Mattias}\binits{M.}} \AND
  \bauthor{\bsnm{Tran},~\bfnm{Minh-Ngoc}\binits{M.-N.}}
(\byear{2019}).
\btitle{Speeding up {MCMC} by efficient data subsampling}.
\bjournal{Journal of the American Statistical Association}
\bvolume{114}
\bpages{831--843}.
\end{barticle}
\endbibitem

\bibitem{roberts2004general}
\begin{barticle}[author]
\bauthor{\bsnm{Roberts},~\bfnm{Gareth~O}\binits{G.~O.}} \AND
  \bauthor{\bsnm{Rosenthal},~\bfnm{Jeffrey~S}\binits{J.~S.}}
(\byear{2004}).
\btitle{{General state space Markov chains and MCMC algorithms}}.
\bjournal{Probability Surveys}
\bvolume{1}
\bpages{20--71}.
\end{barticle}
\endbibitem

\bibitem{roberts1996}
\begin{barticle}[author]
\bauthor{\bsnm{Roberts},~\bfnm{Gareth~O.}\binits{G.~O.}} \AND
  \bauthor{\bsnm{Tweedie},~\bfnm{Richard~L.}\binits{R.~L.}}
(\byear{1996}).
\btitle{Exponential convergence of {L}angevin distributions and their discrete
  approximations}.
\bjournal{Bernoulli}
\bvolume{2}
\bpages{341--363}.
\end{barticle}
\endbibitem

\bibitem{ross2011fundamentals}
\begin{barticle}[author]
\bauthor{\bsnm{Ross},~\bfnm{Nathan}\binits{N.}}
(\byear{2011}).
\btitle{Fundamentals of {S}tein’s method}.
\bjournal{Probability Surveys}
\bvolume{8}
\bpages{210--293}.
\end{barticle}
\endbibitem

\bibitem{skopenkov2008embedding}
\begin{barticle}[author]
\bauthor{\bsnm{Skopenkov},~\bfnm{Arkadiy~B}\binits{A.~B.}}
(\byear{2008}).
\btitle{Embedding and knotting of manifolds in {E}uclidean spaces}.
\bjournal{London Mathematical Society Lecture Note Series}
\bvolume{347}
\bpages{248}.
\end{barticle}
\endbibitem

\bibitem{sriperumbudur2011universality}
\begin{barticle}[author]
\bauthor{\bsnm{Sriperumbudur},~\bfnm{Bharath~K}\binits{B.~K.}},
  \bauthor{\bsnm{Fukumizu},~\bfnm{Kenji}\binits{K.}} \AND
  \bauthor{\bsnm{Lanckriet},~\bfnm{Gert R.~G.}\binits{G.~R.~G.}}
(\byear{2011}).
\btitle{{Universality, characteristic kernels and RKHS embedding of measures}}.
\bjournal{Journal of Machine Learning Research}
\bvolume{12}
\bpages{2389--2410}.
\end{barticle}
\endbibitem

\bibitem{stein1986approximate}
\begin{binproceedings}[author]
\bauthor{\bsnm{Stein},~\bfnm{Charles}\binits{C.}}
(\byear{1986}).
\btitle{Approximate computation of expectations}.
\bpublisher{IMS}.
\end{binproceedings}
\endbibitem

\bibitem{steinwart2008support}
\begin{bbook}[author]
\bauthor{\bsnm{Steinwart},~\bfnm{Ingo}\binits{I.}} \AND
  \bauthor{\bsnm{Christmann},~\bfnm{Andreas}\binits{A.}}
(\byear{2008}).
\btitle{Support vector machines}.
\bpublisher{Springer Science \& Business Media}.
\end{bbook}
\endbibitem

\bibitem{villani2008optimal}
\begin{bbook}[author]
\bauthor{\bsnm{Villani},~\bfnm{C{\'e}dric}\binits{C.}}
(\byear{2008}).
\btitle{Optimal transport: old and new}
\bvolume{338}.
\bpublisher{Springer Science \& Business Media}.
\end{bbook}
\endbibitem

\bibitem{wang2016learning}
\begin{barticle}[author]
\bauthor{\bsnm{Wang},~\bfnm{Dilin}\binits{D.}} \AND
  \bauthor{\bsnm{Liu},~\bfnm{Qiang}\binits{Q.}}
(\byear{2016}).
\btitle{Learning to draw samples: with application to amortized {MLE} for
  generative adversarial learning}.
\bjournal{arXiv preprint arXiv:1611.01722}.
\end{barticle}
\endbibitem

\bibitem{wang2017stein}
\begin{binproceedings}[author]
\bauthor{\bsnm{Wang},~\bfnm{Dilin}\binits{D.}},
  \bauthor{\bsnm{Zeng},~\bfnm{Zhe}\binits{Z.}} \AND
  \bauthor{\bsnm{Liu},~\bfnm{Qiang}\binits{Q.}}
(\byear{2017}).
\btitle{Stein variational message passing for continuous graphical models}.
In \bbooktitle{Proceedings of the 35th International Conference on Machine
  Learning}.
\end{binproceedings}
\endbibitem

\bibitem{wang2014analysis}
\begin{bbook}[author]
\bauthor{\bsnm{Wang},~\bfnm{Feng-Yu}\binits{F.-Y.}}
(\byear{2014}).
\btitle{Analysis for diffusion processes on {R}iemannian manifolds}
\bvolume{18}.
\bpublisher{World Scientific}.
\end{bbook}
\endbibitem

\bibitem{wendland2004scattered}
\begin{bbook}[author]
\bauthor{\bsnm{Wendland},~\bfnm{Holger}\binits{H.}}
(\byear{2004}).
\btitle{Scattered data approximation}
\bvolume{17}.
\bpublisher{Cambridge University Press}.
\end{bbook}
\endbibitem

\bibitem{yang2018goodness}
\begin{binproceedings}[author]
\bauthor{\bsnm{Yang},~\bfnm{Jiasen}\binits{J.}},
  \bauthor{\bsnm{Liu},~\bfnm{Qiang}\binits{Q.}},
  \bauthor{\bsnm{Rao},~\bfnm{Vinayak}\binits{V.}} \AND
  \bauthor{\bsnm{Neville},~\bfnm{Jennifer}\binits{J.}}
(\byear{2018}).
\btitle{{Goodness-of-fit testing for discrete distributions via Stein
  discrepancy}}.
In \bbooktitle{International Conference on Machine Learning}
\bpages{5557--5566}.
\end{binproceedings}
\endbibitem

\bibitem{zanella2019informed}
\begin{barticle}[author]
\bauthor{\bsnm{Zanella},~\bfnm{Giacomo}\binits{G.}}
(\byear{2020}).
\btitle{Informed proposals for local {MCMC} in discrete spaces}.
\bjournal{Journal of the American Statistical Association}
\bvolume{115}
\bpages{852--865}.
\end{barticle}
\endbibitem

\end{thebibliography}
\end{document}